
\documentclass[11pt,reqno]{amsart}
\pdfoutput=1
\synctex=1

\usepackage{amsmath,amssymb,amscd,mathrsfs,epic,latexsym,tikz,mathrsfs,cite,stmaryrd}
\usepackage{pb-diagram}
\usepackage[matrix,arrow]{xy}
\usepackage[margin=2cm]{geometry}
\usepackage[skip=3pt,font=footnotesize,labelfont=bf]{caption}
\usepackage{longtable,multirow,setspace}
\usepackage{bm}
\usepackage{mathtools}

\usepackage[enableskew]{youngtab}
\usepackage{genyoungtabtikz}
\usepackage{mfabacus}
\usepackage{color}
\usepackage{tikz-cd}
\usepackage{nicematrix}

\usepackage{comment}
\usepackage[new]{old-arrows}

\definecolor{mediumblue}{rgb}{0.0, 0.0, 0.8}
\colorlet{darkgreen}{green!50!black}
\usepackage{doi}
\usepackage{hyperref}
\usepackage[capitalise]{cleveref}

\hypersetup{
  colorlinks=true,
  linktoc=page,
  citecolor=darkgreen,
  linkcolor=blue,
  urlcolor=mediumblue,
  unicode
}

\usepackage{easybmat}
\usepackage{lscape,marginnote}
\usepackage{ifthen}
\usepackage{xkeyval}
\usepackage{xcolor}
\usepackage{calc}
\usepackage[colorinlistoftodos]{todonotes}

\crefname{theorem}{Theorem}{Theorems}
\crefname{prop}{Proposition}{Propositions}
\crefname{lem}{Lemma}{Lemmas}
\crefname{cor}{Corollary}{Corollaries}
\crefname{conj}{Conjecture}{Conjectures}
\crefname{defn}{Definition}{Definitions}
\crefname{exe}{Example}{Examples}
\crefname{rmk}{Remark}{Remarks}

\Crefname{theorem}{Theorem}{Theorems}
\Crefname{prop}{Proposition}{Propositions}
\Crefname{lem}{Lemma}{Lemmas}
\Crefname{cor}{Corollary}{Corollaries}
\Crefname{conj}{Conjecture}{Conjectures}
\Crefname{defn}{Definition}{Definitions}
\Crefname{exe}{Example}{Examples}
\Crefname{rmk}{Remark}{Remarks}



\definecolor{answercolor}{RGB}{0, 112, 48}
\excludecomment{answer}      

\makeatletter

\hfuzz 3pt
\vfuzz 2pt

\raggedbottom

\numberwithin{equation}{section}

\newtheorem{thm}{Theorem}[section]
\newtheorem{prop}[thm]{Proposition}
\newtheorem{lem}[thm]{Lemma}
\newtheorem{cor}[thm]{Corollary}

\theoremstyle{definition}
\newtheorem{defn}[thm]{Definition}
\newtheorem{exe}[thm]{Example}
\newtheorem{rmk}[thm]{Remark}

\let\<\langle
\let\>\rangle

\newcommand\Comment[2][\relax]{\space\par\medskip\noindent%
   \fbox{\begin{minipage}{\textwidth}\textbf{Comment\ifx\relax#1\else---#1\fi}\newline%
        #2\end{minipage}}\medskip
}



\def\b1{\text{\boldmath$1$}}

\def\bla{\text{\boldmath$\lambda$}}




\newcommand{\la}{\lambda}
\newcommand{\lak}{\la^{+k}}
\newcommand\zez{\mathbb{Z}/e\mathbb{Z}}
\def\aks{\mathcal{H}_{r,n}^{\bm{s}}}
\newcommand{\rem}[2]{\operatorname{rem}_{#1}(#2)}
\newcommand{\add}[2]{\operatorname{add}_{#1}(#2)}
\newcommand{\remab}[2]{\operatorname{rem}_{#1}(#2)\uparrow_{\mathfrak{n}}}
\newcommand{\addab}[2]{\operatorname{add}_{#1}(#2)\uparrow_{\mathfrak{n}}}
\newcommand{\remabp}[2]{\operatorname{rem}_{#1}(#2)\uparrow_{\mathfrak{n}'}}
\newcommand{\addabp}[2]{\operatorname{add}_{#1}(#2)\uparrow_{\mathfrak{n}'}}
\newcommand{\remabs}[2]{\operatorname{rem}_{#1}(#2)\uparrow_{\mathfrak{n}''}}
\newcommand{\addabs}[2]{\operatorname{add}_{#1}(#2)\uparrow_{\mathfrak{n}''}}

\newcommand{\Z}{\mathbb{Z}}

\def\phi{{\varphi}}

\newcommand\UU{\mathcal{U}}

\renewcommand\geq\geqslant
\renewcommand\leq\leqslant

\renewcommand\succeq\succcurlyeq
\renewcommand\preceq\preccurlyeq

\renewcommand{\trianglerighteq}{\trianglerighteqslant}
\renewcommand{\trianglelefteq}{\trianglelefteqslant}








\newcommand{\HC}{{\mathcal H}}

\renewcommand{\mod}{\bmod \,}

\def\b{\mathfrak{b}}
\def\k{\Bbbk}



{\catcode`\|=\active
  \gdef\set#1{\mathinner{\lbrace\,{\mathcode`\|"8000%
  \let|\midvert #1}\,\rbrace}}
}
\def\midvert{\egroup\mid\bgroup}

\colorlet{darkgreen}{green!50!black}
\tikzset{dots/.style={very thick,loosely dotted},
         greendot/.style={fill,circle,color=darkgreen,inner sep=1.5pt,outer sep=0}
}
\def\greendot(#1,#2){\node[greendot] at(#1,#2){}}

\newenvironment{braid}{
  \begin{tikzpicture}[baseline=6mm,blue,line width=1pt, scale=0.4,
                      draw/.append style={rounded corners},
                      every node/.append style={font=\fontsize{5}{5}\selectfont}]%
  }{\end{tikzpicture}
}

\def\Grid(#1,#2){
  \draw[very thin,gray,step=2mm] (0,0)grid(#1,#2);
  \draw[very thin,darkgreen,step=10mm] (0,0)grid(#1,#2);
}

\newcommand\Tableau[2][\relax]{
  \begin{tikzpicture}[scale=0.5,draw/.append style={thick,black}]
    \ifx\relax#1\relax%
    \else 
      \foreach\box in {#1} { \filldraw[blue!30]\box+(-.5,-.5)rectangle++(.5,.5); }
    \fi
    \newcount\row\newcount\col
    \row=0
    \foreach \Row in {#2} {
       \col=1
       \foreach\k in \Row {
          \draw(\the\col,\the\row)+(-.5,-.5)rectangle++(.5,.5);
          \draw(\the\col,\the\row)node{\k};
          \global\advance\col by 1
       }
       \global\advance\row by -1
    }
  \end{tikzpicture}
}

\newcommand\YoungDiagram[2][\relax]{
  \begin{tikzpicture}[scale=0.5,draw/.append style={thick,black}]
    \ifx\relax#1\relax%
    \else 
    \foreach\box in {#1} {
      \filldraw[blue!30]\box rectangle ++(1,1);
    }
    \fi
    \newcount\row
    \row=0
    \foreach \col in {#2} {
       \draw(1,\the\row)grid ++(\col,1);
       \global\advance\row by -1
    }
  \end{tikzpicture}
}

\colorlet{darkgreen}{green!50!black}
\tikzset{dots/.style={very thick,loosely dotted},
         greendot/.style={fill,circle,color=darkgreen,inner sep=1.5pt,outer sep=0},
         blackdot/.style={fill,circle,color=black,inner sep=2pt,outer sep=0},
         graydot/.style={fill,circle,color=gray,inner sep=1.1pt,outer sep=0},
         whitedot/.style={fill,circle,color=white,inner sep=3pt,outer sep=0},
}
\def\greendot(#1,#2){\node[greendot] at(#1,#2){}}
\def\blackdot(#1,#2){\node[blackdot] at(#1,#2){}}
\def\graydot(#1,#2){\node[graydot] at(#1,#2){}}
\def\whitedot(#1,#2){\node[whitedot] at(#1,#2){}}

\def\Grid(#1,#2){
  \draw[very thin,gray,step=2mm] (0,0)grid(#1,#2);
  \draw[very thin,darkgreen,step=10mm] (0,0)grid(#1,#2);
}


\theoremstyle{plain} 
\newcommand{\thistheoremname}{}
\newtheorem*{genericthm*}{\thistheoremname}
\newenvironment{namedthm*}[1]
  {\renewcommand{\thistheoremname}{#1}%
   \begin{genericthm*}}
  {\end{genericthm*}}

\begin{document}

\title{Empty runner removal theorem for Ariki-Koike algebras}

\author{A. Dell'Arciprete}
       \address{Department of Mathematics, 
University of York, Heslington, York,  UK}
\email{alice.dellarciprete@york.ac.uk}

\author{L. Putignano}
       \address{Dipartimento di Matematica e Informatica U. Dini, 
       Università degli Studi di Firenze, Firenze, Italy}
\email{lorenzo.putignano@unifi.it}
\begin{abstract}
For the Iwahori-Hecke algebras of type $A$, James and Mathas proved a theorem which relates
$v$-decomposition numbers for different values of $e$, by adding empty runners to the James' abacus display. This result is often referred to as the empty runner removal theorem.
In this paper, we extend this theorem to the Ariki-Koike algebras, establishing a similar relationship for the $v$-decomposition numbers.
\end{abstract}

\maketitle
\section{Introduction}
The Ariki-Koike algebras, also known as cyclotomic Hecke algebras of type $G(r,1,n)$, first appeared in the work of Cherednik \cite{MR899405}. However, Ariki and Koike in ~\cite{AK94} were the first to study them systematically as a simultaneous generalisation of Iwahori-Hecke algebras of type $A$ and type $B$. The interest in these algebras is driven by their connections with many different areas such as group theory \cite{MR1882533}, Khovanov homology, knot theory \cite{MR3709726}, and higher representation theory \cite{MR3732238}.
This ubiquity 
made the representation theory of the Ariki-Koike algebras extensively studied in the past decades. A detailed review of their representation theory is provided in Mathas's paper \cite{Mat04}.

In numerous ways, the Ariki-Koike algebra exhibits similar behaviour to the Iwahori-Hecke algebra  of type $A$, with several combinatorial results being generalised to the Ariki-Koike case, replacing partitions with multipartitions. Let {$\HC$} denote an Ariki-Koike algebra. As for type $A$, there is a class of important $\HC$-modules, the \emph{Specht modules}, which are indexed by the set of $r$-multipartitions of $n$. When $\HC$ is semisimple, the Specht modules form a complete set of non-isomorphic simple $\HC$-modules; otherwise, the simple modules appear as the heads of a specific subset of Specht modules.

One of the main problems in the representation theory of Ariki-Koike algebras is the \textit{decomposition number problem}, which asks for the composition multiplicities of the simple modules in the Specht modules. This turned out to be a very challenging problem.
In principle, the decomposition numbers of the Ariki-Koike algebras can be computed in characteristic zero. However, all known algorithms 
are recursive and in practice it is possible to compute them  only for small values of $n$.
Thus, it would be really helpful to have a way of decreasing the value of $n$ so that we just need to compute decomposition numbers for small $n$.
This is the approach that led the way for a collection of results known under the name of \textit{runner removal theorems}. The name comes from James' abacus display: a configuration of vertical strands with beads, the \emph{runners}, that provides a 
way of representing a partition.

The first runner removal theorem was established for the Iwahori-Hecke algebra of type $A$ and the $q$-Schur algebra by James and Mathas in \cite{JM02}. They relate $v$-decomposition numbers of Iwahori-Hecke algebras of type $A$ for different values of $e$, by adding \emph{empty} runners to the {abacus displays of partitions}. After that, in \cite{Frunrem} Fayers proves a similar theorem, which involves adding \emph{full} runners to {the abacus display}. Fayers' results has been extended to Ariki-Koike algebras in \cite{DellA24b}. The goal of the present paper is to give an empty runner removal theorem for Ariki-Koike algebras. 
This result has been conjectured by Ariki, Lyle and Speyer to be an important step towards the study of Schurian finiteness of Ariki-Koike algebras since James and Mathas' empty runner removal theorem was a key component in their previous work \cite{MR4673430} about Schurian finiteness for blocks of Iwahori-Hecke algebras of type $A$.

Our main result is the following.
\begin{thm}\label{main}
Let $\bm\mu$ be a $e$-multiregular $r$-multipartition of $n$ and let $\bm k\in\Z^r$ such that \eqref{same_runn_d} and \eqref{conditiononk^j} hold. Consider $\bm\mu^{+\bm k}$ the $r$-multipartition constructed as in Section \ref{def+k}. Then, for every multipartition $\bm\la$ of $n$,
$$d_{\bm\lambda\bm\mu}^{\bm s}(v)=d_{\bm\lambda^{+\bm k}\bm\mu^{+\bm k}}^{\bm s^+}(v).$$
\end{thm}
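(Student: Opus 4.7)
The plan is to lift the identity to Uglov's higher-level Fock space, where $v$-decomposition numbers of Ariki-Koike algebras appear as the coefficients of Uglov's canonical basis in the standard monomial basis. In this setting the empty-runner addition $\blam\mapsto\blam^{+\bm k}$ is an injection on multipartitions, so one can compare canonical basis computations in the Fock space $\Fock^{\bm s}_{e}$ with those in a larger Fock space $\Fock^{\bm s^+}_{e^+}$, where $e^+$ is the shifted quantum characteristic dictated by the construction in Section \ref{def+k}.

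First I would turn the assignment $\Phi\colon|\blam\ran\mapsto|\blam^{+\bm k}\ran$ into a well-defined $\QQ(v)$-linear map $\Fock^{\bm s}_{e}\to\Fock^{\bm s^+}_{e^+}$ and verify combinatorially that it restricts to a bijection from the $e$-multiregular multipartitions of $n$ onto a distinguished subset of $e^+$-multiregular multipartitions of $n$, namely those whose abacus displays have the prescribed empty runners in each of the $r$ components. This is a direct abacus computation; the conditions \eqref{same_runn_d} and \eqref{conditiononk^j} are precisely what guarantee that the relative reading order of occupied beads across the $r$ runners is preserved.

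Next I would verify that $\Phi$ intertwines the bar involution and the relevant $f_i^{(m)}$-action, so that it carries Uglov's canonical basis of the source into Uglov's canonical basis of the image. Following the strategy of James and Mathas in \cite{JM02} and its level-$r$ adaptation of Fayers' full-runner theorem in \cite{DellA24b}, the key point is to identify each canonical basis element $G(\bmu^{+\bm k})$ with the image under $\Phi$ of $G(\bmu)$ by induction on the size of $\bmu$. At each step one expresses the next basis vector as a suitable divided power $f_i^{(m)}$ applied to a previous one; empty runners are invisible to $f_i^{(m)}$ for residues $i$ lying on non-empty runners, so the straightening rules and the bar involution match under $\Phi$.

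The main obstacle will be controlling the interaction across the components of the multi-charge. Unlike the level-one case of \cite{JM02}, the operator $f_i^{(m)}$ adds nodes in several components simultaneously, and the order in which these are added depends on the relative positions of the charges. Here the hypotheses \eqref{same_runn_d} and \eqref{conditiononk^j} become essential: they guarantee that the empty runners in the different components are aligned, so that addable and removable $i$-nodes (as multisets carrying the normal/conormal orderings used in the canonical basis algorithm) match before and after applying $\Phi$. Once this compatibility is checked, the theorem follows immediately from the definition of $d_{\blam\bmu}^{\bm s}(v)$ as the coefficient of $|\blam\ran$ in the canonical basis element indexed by $\bmu$.
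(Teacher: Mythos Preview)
Your overall strategy matches the paper's: show that the linear extension of $^{+\bm k}$ sends canonical basis vectors to canonical basis vectors by tracking Fayers' LLT-type algorithm through the map. However, two concrete steps in your sketch do not work as stated.

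First, $\Phi$ does \emph{not} intertwine the $f_i^{(m)}$-action for all relevant $i$. When $i=d$ (the label at which the new runner is inserted), a single bead move from runner $d-1$ to runner $d$ in the $e$-abacus becomes a two-step move in the $(e+1)$-abacus: first onto the inserted runner $d$, then onto runner $d+1$. The correct identity is $(f_d^{(m)}\bm\la)^{+\bm k}=\mathfrak{F}_{d+1}^{(m)}\mathfrak{F}_d^{(m)}(\bm\la^{+\bm k})$ (Corollary~\ref{corieqd}), and matching the $v$-powers requires a separate computation (Lemma~\ref{Ndd+1}). So the slogan ``empty runners are invisible to $f_i^{(m)}$'' is only correct for $i\neq d$; the residue $d$ is exactly the one used when building up $\mu^{(1)}$ in the algorithm, so this case cannot be avoided.

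Second, and more seriously, your induction misses a bridging step. Fayers' recursion on the $(e+1)$-side starts from $G^{\bm s^+}_{e+1}((\varnothing,\bm\mu_-^{+\bm k_-}))$ via Proposition~\ref{fcomp}, but $\Phi$ applied to $\bm\mu_0=(\varnothing,\bm\mu_-)$ lands at $\bm\mu_0^{+\bm k}=(\varnothing^{+k},\bm\mu_-^{+\bm k_-})$, and $\varnothing^{+k}$ is a genuinely nonempty $(e+1)$-core. One must therefore exhibit an explicit bar-invariant sequence of divided powers taking $(\varnothing,\bm\mu_-^{+\bm k_-})$ to $\bm\mu_0^{+\bm k}$ \emph{without touching the last $r-1$ components} (Propositions~\ref{indseq} and~\ref{emptyto+k_r}). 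This is where condition~\eqref{conditiononk^j} is actually used: the gap of at least $l(\mu^{(j)})+e$ between successive $k^{(j)}$ forces the other components to have so many beads below the insertion level that the induction sequence cannot act there (Lemma~\ref{xle} and Lemma~\ref{seqindnot0}). Your reading of \eqref{conditiononk^j} as an ``alignment'' condition is off; alignment of the inserted runners is already guaranteed by~\eqref{same_runn_d}. Condition~\eqref{conditiononk^j} is a quantitative depth condition, and without it this bridging step fails.
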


The conditions imposed on $\bm k$ imply that the inserted runners are \emph{empty} according to \cref{def_empty_mlt}; it is then appropriate to talk about \cref{main} as an empty runner removal theorem.
Our methods build on that in \cite{DellA24b} and involve a careful combinatorial study of how Fayers’ LLT-type algorithm behaves when an empty runner is added to the abacus.

The paper is organised as follows.
In Section \ref{sec:basicdef}, we give an overview of any background material that we will need later. This includes both the algebraic setup and several combinatorial definitions. 
In Section \ref{sec:LLTalg}, we introduce the Fock space representation of the quantum group $U_v(\widehat{\mathfrak{sl}}_e)$ and present Fayers' LLT-type algorithm for Ariki-Koike algebras \cite{Fay10}.
In Section \ref{sec:addEmptyRun} we define the addition of empty runners to the abacus display of a multipartition and we describe how this interacts with some relevant generators of $U_v(\widehat{\mathfrak{sl}}_{e+1})$. Finally, in Section \ref{sec:main thm} 
we use the canonical basis vectors of the Fock space corresponding to $\bm\mu$ and $\bm\mu^{+\bm k}$ to prove \cref{main}.

\section*{Acknowledgements}
\textit{The first author was funded by EPSRC grant
EP/V00090X/1. The second author is supported by GNSAGA of INdAM.
We thank Matt Fayers for making his \LaTeX \ style file for abacus displays publicly available and for helpful conversations. We thank Liron Speyer for originally asking the question which led to this project.}

\section{Basic definitions}\label{sec:basicdef}

\subsection{The Ariki-Koike algebras}\label{AKdef}
Let $r\geq 1$ and $n \geq0$. Let $W_{r,n}$ be the complex reflection group $C_r \wr~\mathfrak{S}_n$. 
We define the Ariki-Koike algebra as a deformation of the group algebra $\mathbb{F}W_{r,n}$.
\begin{defn}
Let $\mathbb{F}$ be a field and $q,Q_1,\ldots,Q_r$ be elements of $\mathbb{F}$, with $q$ non-zero. Let $\bm{Q}=(Q_1, \ldots, Q_r)$. The \textbf{Ariki-Koike algebra} $\mathcal{H}_{\mathbb{F},q,\bm{Q}}(W_{r,n})$ of $W_{r,n}$ is defined to be the unital associative $\mathbb{F}$-algebra with generators $T_0, \ldots, T_{n-1}$ and relations
\begin{align*}
(T_0-Q_1)\cdots(T_0-Q_r)&=0,\\
T_0T_1T_0T_1&= T_1T_0T_1T_0,\\
(T_i+1)(T_i-q)&=0, & \text{for }1 &\leq i \leq n - 1,\\
T_iT_j &= T_jT_i, &\text{for }0 &\leq i < j-1 \leq n - 2,\\
T_iT_{i+1}T_i &= T_{i+1}T_iT_{i+1}, &\text{for } 1&\leq  i \leq n - 2.\\
\end{align*}
\end{defn}
Define $e$ to be minimal such that $1 + q + \ldots + q^{e-1}= 0$, or set $e = \infty$ if no such value exists. Throughout this paper we shall assume that $e$ is finite and we shall refer to $e$ as the \textit{quantum characteristic}. Set $I = \{0, 1, \ldots, e-1\}$ which is usually identified  with $\mathbb{Z}/e\mathbb{Z}$. Following \cite{dm02}, we say that the $r$-tuple $\bm{Q}$ is \textit{$q$-connected} if, for each $j\in\{1, \ldots, r\}$, $Q_j = q^{s_j}$ for some $s_j \in I$. Also, Dipper and Mathas prove that any Ariki-Koike algebra is Morita equivalent to a direct sum of tensor products of smaller Ariki-Koike algebras, each of which has $q$-connected parameters. Thus, for the rest of the paper we assume that the Ariki-Koike algebra has $q$-connected parameters with $\bm s=(s_1,\dots,s_r)\in I^r$ and we denote it $\aks=\mathcal{H}_{\mathbb{F},q, \bm{Q}}(W_{r,n})$. We call a \emph{multicharge} for $\aks$ every $r$-tuple of integers $\bm a=(a_1,\dots,a_r)$ such that $a_j\equiv s_j\  (\mod e)$ for every $1\le j\le r$.

\subsection{Multipartitions}\label{multpar}
Let $n\ge0$. A \emph{partition} of $n$ is defined to be a non-increasing sequence $\lambda = (\lambda_1, \lambda_2, \dots)$ of non-negative integers whose sum is $n$. {The integers $\lambda_b$, for $b\geq1$, are called the \emph{parts} of $\lambda$. We write $|\lambda| = n$. For any partition $\la$, we write $l(\lambda)$ for the number of non-zero parts of $\la$,
which we call the \emph{length} of $\lambda$. 
We write $\varnothing$ for the unique partition of $0$. When writing a partition, we group together equal parts with a superscript and we omit trailing zeros. For example, $$(4,4,2,1,0,0, \dots) = (4,4,2,1) = (4^2,2,1).$$
The \emph{Young diagram} of a partition $\lambda$ is the set
$$[\lambda] := \{(b,c) \in \mathbb{Z}_{>0} \times \mathbb{Z}_{>0
}\text{ } | \text{ } c\leq \lambda_b\}.$$ The elements of $[\lambda]$, and more generally of $(\mathbb{Z}_{>0})^2$, are called \emph{nodes}.
Let $2\leq e<\infty$ and $a\ge0$. We define the \emph{$e$-residue} of a node $(b,c)$ with respect to $a$ to be
$$\mathrm{res}_{e,a}(b,c) = c-b+a\  (\mod  e).$$
For each $l\geq 1$, we define the $l^{\text{th}}$ \emph{ladder} to be the set
$$\mathcal{L}_l = \{(b,c) \in (\mathbb{Z}_{>0})^2 \text{ }| \text{ }b + (e - 1)(c - 1) = l\}.$$
Fixed $a\ge0$, all the nodes in $\mathcal{L}_l$ have the same residue (namely, $ a+1 - l\ (\mod e)$), so we define the residue of $\mathcal{L}_l$ (with respect to $a$) to be
this residue. If $\la$ is a partition, the $l^{\text{th}}$ ladder $\mathcal{L}_l(\la)$ of $\la$ is the intersection of $\mathcal{L}_l$ with the Young diagram of $\la$.

\begin{exe} Suppose $e = 3$, and $\la = (5,3,2^2,1)$. Consider the Young diagram of $\la$. Then in the first diagram we label each node of $[\la]$ with the number of the ladder in which it lies, while in the second one we fill the nodes with their residues with respect to $a=2$:

\Yvcentermath1
$$\young(13579,246,35,46,5) \quad \quad \quad \young(20120,120,01,20,1).$$
\end{exe}

\noindent We also recall the following definitions for partitions.
\begin{itemize}
    \item A partition $\lambda$ is \emph{$e$-singular} if $\lambda_{i+1} = \lambda_{i+2} = \ldots = \lambda_{i+e} > 0$ for some $i$. Otherwise, $\lambda$ is \emph{$e$-regular}. For example, for $e=3$ the partition $(4, 3, 1)$ is $3$-regular, while the partition $(4,1^4)$ is $3$-singular. 
    \item A partition $\lambda$ is \emph{$e$-restricted} if $\lambda_i -\lambda_{i+1} < e$ for every $i \geq1$.
\end{itemize}

\begin{defn}\label{multipar}
An $r$-\emph{multipartition} of $n$ is an ordered $r$-tuple $\bm{\lambda} = (\lambda^{(1)}, \dots, \lambda^{(r)})$  of partitions such that $$|\bm{\lambda}| := |\lambda^{(1)}|+ \ldots +|\lambda^{(r)}| = n.$$
If $r$ is understood, we shall just call this a multipartition of $n$.
We write $\mathcal{P}^r$ for the set of $r$-multipartitions.
\end{defn}

We write $\bm\varnothing$ for the unique multipartition of $0$. The \emph{Young diagram} of a multipartition $\bm\lambda$ is the set
$$
[\bm{\lambda}]:=\{(b,c,j)\in \mathbb{Z}_{>0}\times \mathbb{Z}_{>0}\times\{1, \ldots, r\} \text{ }|\text{ } c \leq \lambda_b^{(j)}\}.
$$
We may abuse notation by not distinguishing a multipartition from its Young diagram.}
For multipartitions, a node means an element of $(\mathbb{Z}_{>0})^2\times\{1, \ldots, r\}$. We say that a node $\mathfrak{n} \in [\bm{\lambda}]$ is \emph{removable} if $[\bm{\lambda}]\setminus \{\mathfrak{n}\}$ is the Young diagram of a multipartition, and we say that a node $\mathfrak{n} \notin [\bm{\lambda}]$ is \emph{addable} if $[\bm{\lambda}] \cup \{\mathfrak{n}\}$ is the Young diagram of a multipartition. 


{Given $\bm{a}=(a_1, \ldots, a_r)\in\mathbb{Z}^r$, to each node $(b,c,j) \in [\bm\lambda]$ we associate its $e$-\emph{residue} with respect to $\bm a$ \[\mathrm{res}_{e,\bm{a}}(b,c,j) = a_j + c-b\ \ (\mod e).\] If $i\in\zez$, a node of residue $i$ is called an $i$-node. We denote by $\rem{i}{\bla}$ and $\add{i}{\bla}$ the number of removable and addable $i$-nodes of $\bla$, respectively.

{
\subsection{Regular multipartitions}\label{ssec:Kl_mpt}
Fixed a multicharge $\bm{a}$ for $\aks$, we consider the residues of nodes with respect to $\bm{a}$. This will be our convention throughout the paper. Residues are useful in classifying the simple $\aks$-modules. Indeed, the notion of residue helps us to describe a certain subset of $\mathcal{P}^r$, which index the simple modules for $\aks$.
We impose a partial order $\succ$ on the set of nodes of $e$-residue $i \in I$ of a multipartition by saying that $(b,c,j)$ is \emph{below} $(b',c',j')$ (equivalently that $(b',c',j')$ is \emph{above} $(b,c,j)$) if either $j>j'$ or ($j=j'$ and $b>b'$). In this case we write $(b,c,j)\succ(b',c',j')$. Note this order restricts to a total order on the set of all addable and removable nodes of residue $i \in I$ of a multipartition. Given a multipartition $\bla$ and a node $\mathfrak{n}\in[\bm\la]$, we write $\remab{i}{\bla}$ and $\addab{i}{\bla}$ for the number of removable and addable $i$-nodes of $\bla$ above $\mathfrak{n}$, respectively.

Suppose $\bla$ is a multipartition, and given $i \in I$ define the \emph{$i$-signature of $\bla$ with respect to $\succ$} by examining all the addable and removable $i$-nodes of $\bla$ in turn from lower to higher, and writing a $+$ for each addable $i$-node and a $-$ for each removable $i$-node. Now construct the \emph{reduced $i$-signature with respect to $\succ$} by successively deleting all adjacent pairs $-+$. If there are any $-$ signs in the reduced $i$-signature of $\bm\lambda$, the lowest of these nodes is called the \emph{good $i$-node of $\bla$ with respect to $\succ$}.

\begin{defn}
We say that $\bm\la$ is a \emph{regular multipartition} if and only if there is a sequence
$$\bm\lambda = \bm\lambda(n), \bm\lambda(n-1), \ldots, \bm\lambda(0)= \bm\varnothing$$
of multipartitions such that for each $k$, $[\bm\lambda(k-1)]$ is obtained from $[\bm\lambda(k)]$ by removing a good node with respect to $\succ$.
\end{defn}

This definition depends on the $q$-connected parameters $\bm s$ of $\aks$. We write $\mathcal{R}'(\bm s)$ for the set of regular multipartitions with respect to $\bm s$.
Note that the regular multipartitions are the dual version of the multipartitions called \emph{Kleshchev} or \emph{restricted} in \cite{BK09}. There is a canonical bijection between restricted and regular multipartitions of a fixed integer; for details see \cite{mythesis, BK09}. 

\begin{exe}\label{Kl_exe}
Suppose $r=2$ and $e=4$. Consider $\bm s =(1,0)\in(\Z/4\Z)^2$. Then the multipartition $\bm\la=((3,1), (1^2))$ is regular. Indeed, we have the following sequence of multipartitions obtained from $\bm\la$ by removing each time a good node (we write $[\bm\la(k)]\xleftarrow{i} [\bm\la(k-1)]$ to denote that $[\bm\la(k-1)]$ is obtained from $[\bm\la(k)]$ by removing the good $i$-node for $i\in I$):
$$\Yinternals1\Yaddables1 \begin{matrix*}[l]\Ycorner1\yngres(4,3,1) \\ \\ \Ycorner0\yngres(4,1^2) \end{matrix*} \xleftarrow{3} \begin{matrix*}[l]\Ycorner1\yngres(4,2,1) \\ \\ \Ycorner0\yngres(4,1^2) \end{matrix*} \xleftarrow{0} \begin{matrix*}[l]\Ycorner1\yngres(4,2) \\ \\ \Ycorner0\yngres(4,1^2) \end{matrix*} \xleftarrow{2} \begin{matrix}\Ycorner1\yngres(4,1) \\ \\ \Ycorner0\yngres(4,1^2) \end{matrix}
\xleftarrow{3}\begin{matrix}\Ycorner1\yngres(4,1) \\ \\ \Ycorner0\yngres(4,1) \end{matrix}
\xleftarrow{1}\begin{matrix*}[l] 1 \\ \\ \Ycorner0\yngres(4,1) \end{matrix*}
\xleftarrow{0}
\begin{matrix}\varnothing \\ \\ \varnothing \end{matrix}.$$
\end{exe}

\begin{rmk}\cite{BK09}\label{kl=restr}
If $r = 1$, then the multipartition $(\la)$ is regular if and only if $\la$ is an $e$-regular partition.
\end{rmk}

\begin{defn}
We say that a multipartition $\bm\la$ is \emph{$e$-multiregular} if all its components are $e$-regular partitions.  We write $\mathcal{R}$ for the set of $e$-regular partitions and $\mathcal{R}^r$ for the set of $e$-multiregular $r$-multipartitions.
\end{defn}

\begin{rmk}
Note that $\mathcal{R}'(\bm s) \subseteq \mathcal{R}^r$. This is a dual version of \cite[Proposition 4.8]{MR1603195}.
\end{rmk}

We introduce the following notation for multipartitions, that will be useful in \cref{sec:LLTalg}. If $\bm\la=(\la^{(1)},\dots,\la^{(r)})$ is an $r$-multipartition for $r>1$, we write $\bm\la_-$ for the ($r-1$)-multipartition $(\la^{(2)},\dots,\la^{(r)})$ and $\bm\la_0$ for the $r$-multipartition $(\varnothing,\bm\la_-)=(\varnothing,\la^{(2)},\dots,\la^{(r)})$.  

%

\subsection{Specht module and simple modules}\label{specht}
The algebra $\aks$ is a cellular algebra \cite{DJM98,GL96} with the cell modules indexed by $r$-multipartitions of $n$. For each $r$-multipartition $\bm\lambda$ of $n$ we define a $\aks$-module $S'(\bm\lambda)$ called \emph{(dual) Specht module}; these modules are the cell modules defined in \cite[Chapter 4]{Mat03}. 
When $\aks$ is semisimple, the dual Specht modules form a complete set of pairwise non-isomorphic simple $\aks$-modules. However, we are mainly interested in the case when the algebra is not semisimple. In this case, we set $D'(\bm\la) = S'(\bm\la)/ \mathrm{rad}\text{ } S'(\bm\la)$ where $\mathrm{rad}\text{ } S'(\bm\la)$ denotes the radical of the bilinear form of $ S'(\bm\la)$ defined in terms of the structure constants of the cellular basis of $\aks$. Then, as shown in \cite[Theorem 4.2]{A01}, a complete set of pairwise non-isomorphic simple $\aks$-modules is given by
$$\{D'(\bm\la)\text{ }|\text{ } \bm\la\in\mathcal{R}'(\bm s)\}.$$

}

If $\bm\lambda$ and $\bm\mu$ are $r$-multipartition of $n$ with $\bm\mu$ regular, let $[S'(\bm\lambda):D'(\bm\mu)]$ denote the multiplicity of the simple module $D'(\bm\mu)$ as a composition factor of the Specht module $S'(\bm\lambda)$. The matrix $D = ([S'(\bm\lambda):D'(\bm\mu)])_{\bm\la, \bm\mu}$ is called the \emph{decomposition matrix} of $\aks$ and determining its entries is one of the most outstanding problems in the representation theory of Ariki-Koike algebras. It follows from the cellularity of $\aks$ that the decomposition matrix is unitriangular; to state this we need to define a partial order on multipartitions. Given two multipartitions $\bm{\lambda}$ and $\bm{\mu}$ of $n$, we say that $\bm{\lambda}$ \emph{dominates} $\bm{\mu}$, and write $\bm{\lambda} \trianglerighteq \bm{\mu}$, if {
$$\sum_{a=1}^{j-1}|\lambda^{(a)}| + \sum_{b=1}^{i} \lambda_b^{(j)} \geq \sum_{a=1}^{j-1}|\mu^{(a)}| + \sum_{b=1}^{i}\mu_b^{(j)}$$
for $j=1,2, \ldots, r$ and for all $i \geq 1$}. We refer to this order as the dominance order on multipartitions.

\begin{thm}\cite{DJM98,GL96}\label{decmtxHQ}
Let $\bm\lambda$ and $\bm\mu$ be $r$-multipartitions of $n$ with $\bm\mu$ regular.
\begin{enumerate}
\item If $\bm\mu=\bm\lambda$, then $[S'(\bm{\lambda}):D'(\bm{\mu})]=1$.
\item If $[S'(\bm{\lambda}):D'(\bm{\mu})]>0$, then $\bm{\lambda} \trianglelefteq \bm{\mu}$.
\end{enumerate}
\end{thm}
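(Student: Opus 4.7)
The plan is to deduce both parts of the statement from the general theory of cellular algebras of Graham and Lehrer \cite{GL96}, applied to the cellular structure on $\aks$ constructed by Dipper, James, and Mathas \cite{DJM98}. Recall from \cref{specht} that $\aks$ admits a cellular basis $\{n_{\sfr\tfr}\}$ indexed by pairs of standard tableaux of the same multipartition shape, and that the natural partial order required by the cellular axioms is the dominance order $\trianglerighteq$ on multipartitions. By construction, the dual Specht module $S'(\bm\lambda)$ is the cell module attached to $\bm\lambda$, and it carries a canonical symmetric bilinear form defined from the structure constants of the cellular basis, whose radical $\mathrm{rad}\, S'(\bm\lambda)$ is an $\aks$-submodule.

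First I would apply the general machinery of \cite{GL96}: whenever the bilinear form on $S'(\bm\lambda)$ is nonzero, the quotient $D'(\bm\lambda)$ is a simple module and occurs with multiplicity exactly one in any composition series of $S'(\bm\lambda)$, namely as its head. The nonvanishing of the form is exactly what, in our setting, corresponds to $\bm\lambda \in \mathcal{R}'(\bm s)$ via the classification recalled in \cref{specht} (see \cite[Theorem 4.2]{A01}). Specialising to $\bm\lambda = \bm\mu$ gives part (1) immediately.

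For part (2) I would use the triangularity that is built into the cellular axioms. Namely, the two-sided ideal of $\aks$ spanned by basis elements $n_{\sfr\tfr}$ whose shape is strictly greater than $\bm\lambda$ in dominance order annihilates $S'(\bm\lambda)$; consequently any composition factor of $S'(\bm\lambda)$ is a simple quotient of some cell module $S'(\bm\nu)$ with $\bm\nu \trianglerighteq \bm\lambda$. Since the unique simple quotient of $S'(\bm\nu)$ (when nonzero) is $D'(\bm\nu)$, the appearance of $D'(\bm\mu)$ as a composition factor of $S'(\bm\lambda)$ forces $\bm\mu \trianglerighteq \bm\lambda$, i.e.\ $\bm\lambda \trianglelefteq \bm\mu$, as required.

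The only nontrivial content of this proof lies in the verification that the Dipper--James--Mathas basis genuinely satisfies the cellular axioms with respect to dominance order on multipartitions — a combinatorial check carried out in \cite{DJM98}. Once that structural result is in hand, both statements of the theorem are formal consequences of the Graham--Lehrer framework, so no further combinatorial work is needed here and the argument reduces to invoking the cited results.
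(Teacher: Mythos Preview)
Your proposal is correct and matches the paper's treatment: the paper does not give its own proof of this theorem but simply cites it as a consequence of the cellular structure on $\aks$ established in \cite{DJM98} together with the general theory of \cite{GL96}. Your explanation of how parts (1) and (2) follow from the Graham--Lehrer framework is precisely the standard argument underlying those citations.
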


\subsection{The abacus}\label{betaAbacus}
Given the assumption that the Ariki-Koike algebra $\aks$ has $q$-connected parameters, we may conveniently represent multipartitions on an abacus display.

Fix $e\ge2$ and take an abacus with $e$ vertical runners labelled by the symbols $0,\dots,e-1$ from left to right. We index the \emph{positions} in the $i^{\text{th}}$ runner by the integers $i,i+e,i+2e,\dots$ from top down and we align runners so that position $x$ is immediately to the right of position $x-1$ whenever $e\nmid x$. We call \emph{level} each row of the abacus, e.g. level 0 is the topmost row and comprises positions from 0 to $e-1$. Each position in the abacus is either a \emph{bead} $\abacus(b)$ or a \emph{space} $\abacus(n)$. A space is sometimes called an \emph{empty position}.

Consider a partition $\la$ and an integer $a\ge l(\la)$. We construct the \emph{$e$-abacus display of $\la$ with charge $a$}, denoted $\mathrm{Ab}_{e}^{a}(\la)$, putting a bead at position $\la_j-j+a$ for every $1\le j\le a$. Hence the charge gives the number of beads in the abacus display. Throughout the paper we may use the word \emph{configuration} in place of display.

Note that, by construction, every runner in the abacus display of a partition has infinitely many spaces downwards. When we draw an abacus, we use the convention that all positions below those shown are spaces. Also, observe that increasing the charge by $e$ entails adding a row of beads at the top of the abacus display.

Let $\bm{\lambda}=(\lambda^{(1)}, \ldots, \lambda^{(r)})$ be a $r$-multipartition of $n$ and fix $\bm{a}=(a_1, \ldots, a_r)\in \mathbb{Z}^{r}$
a multicharge for $\aks$ such that $a_j\ge l(\lambda^{(j)})$ for every $1\le j\le r$. The \emph{$e$-abacus display of $\bm\la$ with multicharge $\bm a$}, denoted $\mathrm{Ab}_e^{\bm a}(\bla)$ is the $r$-tuple of $e$-abacus displays associated to the components of $\bla$. We use the term \emph{multirunner} to mean the family of runners having the same label in the abacus display of a multipartition.

\begin{exe}
Let $e=4$. Suppose that $r = 3$, $\bla= ((6^3,4,2^4,1), \varnothing, (9,7^5,3,1^4))$ and $\bm s=(0,1,1)$.
The 4-abacus display of $\bla$ with multicharge $\bm{a}=(12,5,13)$ is as follows.

\begin{center}
\begin{tabular}{c|c|cc}
$\begin{matrix} 0&1&2&3
\end{matrix}$ & $\begin{matrix} 0&1&2&3
\end{matrix}$ & $\begin{matrix} 0&1&2&3
\end{matrix}$ & \text{level}\\
\sabacus(1.5,lmmr,bbbn,bnbb,bbnn,bnnb,bbnn,nnnn) & \sabacus(1.5,lmmr,bbbb,bnnn,nnnn,nnnn,nnnn,nnnn) & \sabacus(1.5,lmmr,bbnb,bbbn,nbnn,nnbb,bbbn,nbnn) & $\begin{matrix}
0\\
1\\
2\\
3\\
4\\
5
\end{matrix}$
\end{tabular}
\end{center}
\end{exe}

\subsection{Rim $e$-hooks and $e$-core}
We recall some other useful definitions about the Young diagram of a partition. 

\begin{defn}
Suppose $\lambda$ is a partition and $(i,j)$ is a node of $[\lambda]$.
\begin{enumerate}
\item The \emph{rim} of $[\lambda]$ is the set of nodes
$$\{(b,c) \in [\lambda] \text{ }|\text{ }(b+1,c+1) \notin [\lambda]\}.$$
\item A \emph{$e$-rim hook} is a connected subset $R$ of the rim of $\la$ containing $e$ nodes and such that $[\lambda]\setminus R$ is the diagram of a partition.
\item  A \emph{$e$-core} is a partition with no $e$-rim hooks.
\end{enumerate}
\end{defn}


The abacus display is useful for visualising the removal of $e$-rim hooks. If we are given an abacus display for $\lambda$, then $[\lambda]$ has a $e$-rim hook if and only if there is a bead that has a space immediately above. Furthermore, removing a $e$-rim hook corresponds to sliding a bead up one position within its runner. So, a partition is a $e$-core if and only if every bead in the abacus display has a bead immediately above it. In particular, this implies that the $e$-core of a partition is well defined.
%

Finally, we can notice that each bead corresponds to a row of the diagram of $\lambda$ (or to a row of length $0$), and hence to a part of $\la$. The latter can be detected from the abacus counting the spaces that precedes the corresponding bead. If the node at the end of a row of $\la$ (if it exists) has residue $i\ (\mod e)$ with respect to the charge of the abacus, for some $i \in I$, then the corresponding bead is on runner $i$. Thus, moving a bead from runner $i$ to runner $i+1\ (\mod e)$ is equivalent to adding a node of residue $i + 1\ (\mod e)$ to the diagram of $\la$. Similarly, moving a bead from runner $i$ to runner $i - 1\ (\mod e)$ is equivalent to removing a $i$-node from the diagram of $\la$.

\section{An LLT-type algorithm for Ariki-Koike algebras}\label{sec:LLTalg}

In this section we consider the integrable representation theory of the quantised enveloping algebra $\UU=U_v(\widehat{\mathfrak{sl}}_e)$, defined in \cref{ssec:qtm_alg}. For any dominant integral weight $\Lambda$ for $\UU$, the simple highest-weight module $V(\Lambda)$ for $\UU$ can be constructed as a submodule $M^{\bm s}$ of a \emph{Fock space} $\mathcal{F}^{\bm s}$ (which depends not just on $\Lambda$ but on an ordering of the fundamental weights involved in $\Lambda$). Using the standard basis of the Fock space, one can define a \emph{canonical basis} 
 for $M^{\bm s}$. There is considerable interest in computing this canonical basis, that is, computing the transition coefficients from the canonical basis to the standard basis. This is because of Ariki's theorem, which says that the transition coefficients evaluated at $v=1$ give the decomposition numbers for Ariki-Koike algebras.  The LLT algorithm due to Lascoux, Leclerc and Thibon \cite{LLT96} computes the canonical basis whenever $\Lambda$ is a weight of level $1$. The purpose of this section is to present the generalisation of this algorithm to higher levels given by Fayers in \cite{Fay10}.
Fayers' algorithm computes the canonical basis for an intermediate module $M^{\otimes\bm s}$, which is defined to be the tensor product of level $1$ highest-weight simple modules.  It is then straightforward to discard unwanted vectors to get the canonical basis for $M^{\bm s}$. In order to describe Fayers' algorithm we need to introduce some notation as we do in the next subsection.

%
%

\subsection{The quantum algebra $U_v(\widehat{\mathfrak{sl}}_e)$ and the Fock space}\label{ssec:qtm_alg}

Denote with $\UU$ the quantised enveloping algebra $U_v(\widehat{\mathfrak{sl}}_e)$. This is a $\mathbb{Q}(v)$-algebra with generators $e_i,f_i$ for $i\in I$ and $v^h$ for $h\in P^\vee$, where $P^\vee$ is a free $\mathbb{Z}$-module with basis $\{h_i\mid i\in I\}\cup\{d\}$. 
These generators are subjected to well known relations which can be found in \cite[\S 4.1]{LLT96}.
Here we follow the usual notation for $v$-integers, $v$-factorials and $v$-binomial coefficients:
$$[k]=\dfrac{v^{k}-v^{-k}}{v-v^{-1}}, \quad [k]!=[k][k-1] \cdots [1], \quad \left[\begin{matrix}
m\\k
\end{matrix}\right]=\dfrac{[m]!}{[m-k]![k]!}.$$
For any integer $m>0$, we write $f_i^{(m)}$ to denote the quantum divided power $f_i^m/[m]!$.
Since $\UU$ is a Hopf algebra with comultiplication denoted $\Delta$ in \cite{Kas02}, the tensor product of two $\UU$-modules can be regarded as a $\UU$-module.
The $\mathbb{Q}$-linear ring automorphism $\overline{\phantom{o}}:\UU\to\UU$ defined by
\[\overline{e_i}=e_i,\qquad \overline{f_i}=f_i,\qquad \overline v=v^{-1},\qquad \overline{v^h}=v^{-h}\]
for $i\in I$ and $h\in P^\vee$ is called \emph{bar involution}.

Now we fix $\bm s\in I^r$ for some $r\geq1$, and define the \emph{Fock space} $\mathcal{F}^{\bm s}$ to be the $\mathbb{Q}(v)$-vector space with basis $\{\bm\la \mid \bm\la\in\mathcal{P}^r\}$, which we call the \emph{standard basis}.  The Fock space has the structure of a $\UU$-module: for a full description of the module action, we refer to \cite{Fay10}. Here, we describe the action of the generators $f_0, \ldots, f_{e-1}$.

Given $\bm\la, \bm\xi \in\mathcal{P}^r$, we write $\bm\la\xrightarrow{m:i}\bm\xi$ to indicate that $\bm\xi$ is obtained from $\bm\la$ by adding $m$ addable $i$-nodes. Equivalently, this means that an abacus display for $\bm\xi$ is obtained from that of $\bla$ by moving $m$ beads from multirunner $i-1$ to multirunner $i$. If this is the case, then we consider the total order $\succ$ on addable and removable nodes and we define the integer

\[N_i(\bm\la,\bm\xi)= \sum_{\mathfrak{n}\in\bm\xi\setminus\bm\la}\addab{i}{\bm\xi}-\remab{i}{\bla}.\]
Now the action of $f_i^{(m)}$ is given by
$$f_i^{(m)}\bm\la = \sum_{\bm\la\xrightarrow{m:i}\bm\xi}v^{N_i(\bm\la,\bm\xi)}\bm\xi.$$
We recall the following result about $N_i(\bm\la, \bm\xi)$.
\begin{prop}\cite[Proposition 3.2]{DellA24b}\label{Ncomps}
Let $i\in I$. Suppose $\bm\la$ and $\bm\xi$ are $r$-multipartitions such that $\bm\la\xrightarrow{m:i}\bm\xi$. Then
$$N_i(\bla,\bm\xi)= \sum_{\mathfrak{n}\in \bm\xi\setminus\bm\la}(\addab{i}{\xi^{(J_{\mathfrak{n}})}}-\remab{i}{\la^{(J_{\mathfrak{n}})}})+\sum_{j=1}^{J_{\mathfrak{n}}-1}\add{i}{\xi^{(j)}}-\rem{i}{\la^{(j)}},$$
where $J_{\mathfrak{n}}$ is the component of the node $\mathfrak{n}$ in $\bm\xi$. 
\end{prop}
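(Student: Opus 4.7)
The plan is to prove the formula by a direct combinatorial unpacking of the definition of $N_i(\bla,\bm\xi)$, reorganising each summand according to which component of the multipartition contributes. The key observation is that the partial order $\succ$ on nodes has a very rigid behaviour across components: for any node $\mathfrak{n}$ in component $J_{\mathfrak{n}}$, every node in a component $j<J_{\mathfrak{n}}$ is automatically above $\mathfrak{n}$, whereas nodes in component $J_{\mathfrak{n}}$ itself are above $\mathfrak{n}$ precisely when they lie strictly higher in that single component. Nodes in components $j>J_{\mathfrak{n}}$ never contribute. So the counts $\addab{i}{\bm\xi}$ and $\remab{i}{\bla}$ appearing in the definition of $N_i(\bla,\bm\xi)$ split cleanly along components.

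Concretely, I would fix $\mathfrak{n}\in\bm\xi\setminus\bla$ and write
\[
\addab{i}{\bm\xi}=\sum_{j=1}^{J_{\mathfrak{n}}-1}\add{i}{\xi^{(j)}}+\addab{i}{\xi^{(J_{\mathfrak{n}})}},
\]
observing that addability of an $i$-node in a multipartition is a component-local condition, so the count of addable $i$-nodes of $\bm\xi$ in the $j$-th component is just $\add{i}{\xi^{(j)}}$. For $j<J_{\mathfrak{n}}$ every such node is above $\mathfrak{n}$ by the first clause of $\succ$, giving the full count; for $j=J_{\mathfrak{n}}$ one restricts to nodes above $\mathfrak{n}$ within that single component, which is by definition $\addab{i}{\xi^{(J_{\mathfrak{n}})}}$. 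An identical splitting applies to $\remab{i}{\bla}$, replacing $\bm\xi$ by $\bla$ componentwise. Subtracting and then summing over $\mathfrak{n}\in\bm\xi\setminus\bla$ yields the stated formula.

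There is no real obstacle: this is a bookkeeping argument that reduces to correctly reading the definition of the order $\succ$ introduced in \cref{ssec:Kl_mpt}. The only point that requires a moment's thought is that one should not conflate $\la^{(j)}$ with $\xi^{(j)}$ for $j<J_{\mathfrak{n}}$, since the transition $\bla\xrightarrow{m:i}\bm\xi$ may add further $i$-nodes in lower-numbered components; but this causes no trouble because the splitting treats addable nodes of $\bm\xi$ and removable nodes of $\bla$ independently, each in its own component.
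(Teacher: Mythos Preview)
Your argument is correct: the definition of $N_i(\bla,\bm\xi)$ unpacks componentwise exactly as you describe, because the order $\succ$ places every node in a component $j<J_{\mathfrak{n}}$ above $\mathfrak{n}$, and within component $J_{\mathfrak{n}}$ reduces to the single-partition notion of ``above''. The paper itself does not prove this proposition --- it is quoted from \cite{DellA24b} --- so there is no in-paper argument to compare against; your direct bookkeeping verification is precisely the natural proof one would expect.
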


The reason why we are interested in the Fock space is because the submodule $M^{\bm s}$ generated by the \emph{empty} multipartition $\bm \varnothing=(\varnothing, \ldots, \varnothing)$ is isomorphic to the simple highest-weight module $V(\Lambda_{s_1}+\dots+\Lambda_{s_r})$. This submodule inherits a bar involution from $\UU$: this is defined by $\overline{\bm\varnothing}=\bm\varnothing$ and $\overline{um} = \overline u\,\overline m$ for all $u\in\UU$ and $m\in M^{\bm s}$.  The bar involution allows one to define a \emph{canonical basis} for $M^{\bm s}$; this consists of vectors $G^{\bm s}_e(\bm\mu)$, for $\bm\mu$ lying in some subset of $\mathcal{P}^r$ (with our conventions, this is the set of regular multipartitions). These canonical basis vectors are characterised by the following properties:
\begin{itemize}
\item
$\overline{G^{\bm s}_e(\bm\mu)}=G^{\bm s}_e(\bm\mu)$;
\item
if we write $G^{\bm s}_e = \sum_{\bm\la\in\mathcal{P}^r}d^{\bm s}_{\bm\la\bm\mu}(v)\bm\la$ with $d^{\bm s}_{\bm\la\bm\mu}(v)\in\mathbb{Q}(v)$, then $d^{\bm s}_{\bm\mu\bm\mu}(v)=1$, while $d^{\bm s}_{\bm\la\bm\mu}(v)\in v\mathbb{Z}[v]$ if $\bm\la\neq\bm\mu$; in particular, $d^{\bm s}_{\bm\la\bm\mu}(v)= 0$ unless $\bm \mu \trianglerighteq \bm\la$.
\end{itemize}

{A lot of effort has been put in computing the canonical basis elements (i.e. computing the transition coefficients $d^{\bm s}_{\bm\la\bm\mu}(v)$), because of the following theorem. 

\begin{thm}\cite[Theorem 4.4]{A96}\label{arithm}
Let $\mathbb{F}$ be a field of characteristic $0$ and $\bm s \in I^r$. Suppose $\bm\la$, $\bm\mu$ are $r$-multipartitions of $n$ with $\bm\mu$ regular. 
Then
$$[S'(\bm\la)\colon D'(\bm\mu)]=d_{\bm\la\bm\mu}^{\bm s}(1).$$
\end{thm}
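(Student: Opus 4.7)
The plan is to categorify the simple highest-weight module $M^{\bm s}$ using the representation theory of Ariki-Koike algebras, and then identify the resulting basis of simple classes with the specialisation of the canonical basis at $v=1$. First I would form the complex Grothendieck group
\[K:=\bigoplus_{n\ge 0}K_0(\aks\text{-mod})\otimes_{\Z}\CC,\]
which carries two distinguished bases: the Specht basis $\{[S'(\bm\la)]\mid \bm\la\in\mathcal{P}^r\}$ and the simple basis $\{[D'(\bm\mu)]\mid\bm\mu\in\mathcal{R}'(\bm s)\}$. For each $i\in I$ I would define exact endofunctors $E_i,F_i$ via \emph{refined} $i$-induction and $i$-restriction: these are obtained from the usual induction/restriction between $\mathcal{H}_{\mathbb{F},q,\bm Q}(W_{r,n})$ and $\mathcal{H}_{\mathbb{F},q,\bm Q}(W_{r,n+1})$ by decomposing into generalised eigenspaces of a Jucys-Murphy-type element, so that applying $F_i$ to a module whose composition factors have nodes of prescribed residues produces a module that adds $i$-nodes.

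Next I would check that these operators turn $K$ into a $U(\widehat{\mathfrak{sl}}_e)$-module (the $v=1$ specialisation of $\UU$), and that the map $\Psi\colon K\to\mathcal{F}^{\bm s}|_{v=1}$, $[S'(\bm\la)]\mapsto\bm\la$, is $U(\widehat{\mathfrak{sl}}_e)$-equivariant and restricts to an isomorphism onto $M^{\bm s}|_{v=1}$. The matching of $F_i$ with the action of $f_i$ at $v=1$ comes from the classical filtration of $S'(\bm\la)$ induced from $\mathcal{H}_{r,n-1}^{\bm s}$, whose sections are precisely the Specht modules $S'(\bm\la')$ for $\bm\la\xrightarrow{1:i}\bm\la'$; at $v=1$ the powers of $v$ in the Fock-space formula disappear and the two actions coincide. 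Since $M^{\bm s}$ is simple as a $\UU$-module and $K$ is generated from the class $[\bm\varnothing]$ by the $F_i$, this identifies $K$ with $M^{\bm s}|_{v=1}$.

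To identify $\Psi([D'(\bm\mu)])$ with $G^{\bm s}_e(\bm\mu)|_{v=1}$, I would upgrade the construction to a \emph{graded} categorification via the Brundan-Kleshchev isomorphism of $\aks$ with a cyclotomic Khovanov-Lauda-Rouquier algebra. The grading promotes $K$ to a free $\mathbb{Z}[v,v^{-1}]$-module carrying an honest $\UU$-action, and supplies a bar involution through a suitable duality on projective modules. In this graded setting the classes of graded simple modules $[D'(\bm\mu)]$ are characterised by exactly the two properties defining the canonical basis: bar-invariance, and unitriangularity with respect to the Specht classes. Unitriangularity is immediate from \cref{decmtxHQ}. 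By the Kashiwara-Lusztig uniqueness, the two bases coincide, and setting $v=1$ gives $[S'(\bm\la)\colon D'(\bm\mu)]=d_{\bm\la\bm\mu}^{\bm s}(1)$.

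The main obstacle is establishing bar-invariance of $[D'(\bm\mu)]$ in the graded Grothendieck group. Ariki's original argument avoids the graded picture entirely and instead invokes a deep geometric input: using the surjection from the affine Hecke algebra of type $A$ onto $\aks$, one reduces to computing decomposition numbers for affine Hecke algebras at $q$ a primitive $e$-th root of unity, at which point Kazhdan-Lusztig and Ginzburg's geometric realisation of simples via equivariant perverse sheaves on nilpotent cones applies. On the geometric side the bar involution is Verdier duality and the classes of simples are (specialisations of) Kazhdan-Lusztig basis elements, whence bar-invariance is automatic. It is precisely at this step that the characteristic-zero hypothesis is indispensable, since in positive characteristic the classes $[D'(\bm\mu)]$ need not correspond to Kazhdan-Lusztig basis elements, and the identity of the theorem may fail.
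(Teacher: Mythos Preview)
The paper does not prove this statement: it is quoted as a background result, attributed to Ariki \cite[Theorem 4.4]{A96}, and used without any argument supplied. So there is no proof in the paper to compare your proposal against.

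That said, your sketch is a faithful outline of the architecture of Ariki's proof. The three ingredients you isolate --- (i) building a $U(\widehat{\mathfrak{sl}}_e)$-action on $\bigoplus_n K_0(\aks)$ via $i$-induction and $i$-restriction, (ii) matching the Specht classes with the standard basis of $\mathcal{F}^{\bm s}|_{v=1}$ so that the Grothendieck group realises $M^{\bm s}|_{v=1}$, and (iii) identifying the classes of simples (equivalently, of indecomposable projectives under duality) with the specialised canonical basis --- are exactly the steps, and you correctly locate the entire difficulty in step (iii). Your final paragraph is the honest one: bar-invariance of the simple classes is not formal, and Ariki obtains it by pulling back through the surjection from the affine Hecke algebra and invoking the Kazhdan--Lusztig/Ginzburg geometric classification of simples via perverse sheaves, where bar-invariance is Verdier duality. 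This is also precisely where characteristic zero enters.

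One caution about the middle paragraph: the Brundan--Kleshchev graded lift does not, by itself, give an independent route around the geometry. Their identification of graded decomposition numbers with the $d^{\bm s}_{\bm\la\bm\mu}(v)$ in characteristic zero still rests on a geometric input (Ariki's theorem, or the Varagnolo--Vasserot theorem for KLR algebras). So the graded picture repackages the argument elegantly and upgrades the conclusion to keep $v$ indeterminate, but the hard step --- bar-invariance of simples/projectives --- is the same obstacle and is resolved by the same means. Presenting the graded approach as an alternative proof strategy would be misleading unless you supply a genuinely different argument for that step.
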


In fact, this theorem says that the coefficients $d^{\bm s}_{\bm\la\bm\mu}(v)$ specialised at $v = 1$ give the decomposition numbers of the Ariki-Koike algebras $\aks$ over a characteristic zero field. More generally, Brundan and Kleshchev \cite{BK09} show that the coefficients $d^{\bm s}_{\bm\la\bm\mu}(v)$ (with $v$ still indeterminate) can be regarded as \textit{graded decomposition numbers} where `graded' refers to the $\Z$-grading that Ariki-Koike algebras inherit by being isomorphic to certain quotients of KLR algebras. 
}

In \cite{Y_can-basis}, Yvonne extends the bar involution on $M^{\bm s}$ to the whole of $\mathcal{F}^{\bm s}$. 
The extension yields a canonical basis for the whole of $\mathcal{F}^{\bm s}$ indexed by the set $\mathcal{P}^r$ of $r$-multipartitions. In particular we get the following result.
\begin{thm}\label{decdom}\cite{Fay10}
For each multipartition $\bm\mu$, there is a unique vector
$$G^{\bm s}_e(\bm \mu) = \sum_{\bm\la\in\mathcal{P}^r}d^{\bm s}_{\bm\la\bm\mu}(v)\bm\la \in \mathcal{F}^{\bm s} \text{ with } d^{\bm s}_{\bm\la\bm\mu}(v)\in\mathbb{Q}(v)$$
such that
\begin{itemize}
\item
$\overline{G^{\bm s}_e(\bm\mu)}=G^{\bm s}_e(\bm\mu)$;
\item
$d^{\bm s}_{\bm\mu\bm\mu}(v)=1$, while $d^{\bm s}_{\bm\la\bm\mu}(v)\in v\mathbb{Z}[v]$ if $\bm\la\neq\bm\mu$;
\item
$d^{\bm s}_{\bm\la\bm\mu}(v)= 0$ unless $\bm \mu \trianglerighteq \bm\la$.
\end{itemize}
\end{thm}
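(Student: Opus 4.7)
The plan is to follow the standard Kashiwara--Lusztig recipe for constructing a canonical basis, applied to the bar involution on $\mathcal{F}^{\bm s}$ inherited from the extension described in \cite{BK09}. The argument splits into a technical input (triangularity of the bar involution with respect to dominance order) and then standard uniqueness and existence arguments built on top of it.

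First, I would establish the key technical input: for every $\bm\mu\in\mathcal{P}^r$, the bar involution acts unitriangularly with respect to the dominance order, i.e.\
\[
\overline{\bm\mu} \;=\; \bm\mu \;+\; \sum_{\bm\la\triangleleft\bm\mu} a_{\bm\la\bm\mu}(v)\,\bm\la
\]
with coefficients $a_{\bm\la\bm\mu}(v)\in\mathbb{Z}[v,v^{-1}]$. To see this, one uses the tensor factorisation $\mathcal{F}^{\bm s}\cong \mathcal{F}^{(s_1)}\otimes\cdots\otimes\mathcal{F}^{(s_r)}$ of \cite{BK09} and the description of the extended bar involution as the composition of the componentwise level-$1$ bar involutions with the action of the quasi-$R$-matrix $\Theta\in\widehat{\UU\otimes\UU}$. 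On each level-$1$ factor $\mathcal{F}^{(s_j)}$, the classical LLT construction gives triangularity with respect to the dominance order on partitions, and the quasi-$R$-matrix contributes only terms of strictly smaller dominance in the tensor product (since each $\Theta$-correction moves weight via the $f_i$'s, which strictly decrease the dominance order on each component). Combining these two ingredients yields the required triangularity for the full bar involution.

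With triangularity in hand, \textbf{uniqueness} is a clean Lusztig-type argument. Suppose $G$ and $G'$ both satisfy the three listed conditions, and set $D = G-G' = \sum_{\bm\la}b_{\bm\la}(v)\bm\la$. Then $D$ is bar-invariant, $b_{\bm\mu}(v)=1-1=0$, all other $b_{\bm\la}(v)\in v\mathbb{Z}[v]$, and $b_{\bm\la}(v)=0$ unless $\bm\la\triangleleft\bm\mu$. If $D\neq0$, pick $\bm\la^{\ast}$ maximal in the dominance order with $b_{\bm\la^{\ast}}(v)\neq 0$. Equating the coefficient of $\bm\la^{\ast}$ in $D$ and in $\overline D$ and using triangularity (only strictly dominated terms can contribute to lower positions), we get $b_{\bm\la^{\ast}}(v)=\overline{b_{\bm\la^{\ast}}(v)}$. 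But a nonzero element of $v\mathbb{Z}[v]$ cannot be bar-invariant, contradiction.

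For \textbf{existence}, I would induct on the dominance order. If $\bm\mu$ is minimal, then $\overline{\bm\mu}=\bm\mu$ by triangularity, so $G^{\bm s}_e(\bm\mu):=\bm\mu$ works. For the inductive step, assume $G^{\bm s}_e(\bm\la)$ has been constructed for all $\bm\la\triangleleft\bm\mu$ and write
\[
\overline{\bm\mu} \;=\; \bm\mu \;+\; \sum_{\bm\la\triangleleft\bm\mu} a_{\bm\la\bm\mu}(v)\,\bm\la.
\]
Working from the top of the dominance order downward among $\bm\la\triangleleft\bm\mu$, I would subtract suitable $\mathbb{Z}[v,v^{-1}]$-combinations of the (already bar-invariant) vectors $G^{\bm s}_e(\bm\la)$ from $\bm\mu$ to force each coefficient into $v\mathbb{Z}[v]$. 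This succeeds because any Laurent polynomial $a\in\mathbb{Z}[v,v^{-1}]$ decomposes uniquely as $a = (p+\overline{p})+q$ with $p\in v\mathbb{Z}[v]$ and $q\in v\mathbb{Z}[v]$, so the bar-invariant part $p+\overline{p}$ can be absorbed by subtracting $p\cdot G^{\bm s}_e(\bm\la)$ at each stage, while the $v\mathbb{Z}[v]$ remainder $q$ is what we want in the final expansion. The resulting vector is bar-invariant by construction, has coefficient $1$ at $\bm\mu$, coefficients in $v\mathbb{Z}[v]$ elsewhere, and vanishes outside $\{\bm\la\trianglelefteq\bm\mu\}$.

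The main obstacle is the triangularity of the bar involution: uniqueness and existence are formal consequences of it by the standard Lusztig lemma, whereas proving triangularity itself requires a genuine analysis of how the quasi-$R$-matrix and the level-$1$ bar involutions interact on tensor products of standard basis elements.
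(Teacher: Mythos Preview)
The paper does not give its own proof of this theorem; it is quoted as a result from \cite{Fay10} (together with \cite{BK09} for the extension of the bar involution to all of $\mathcal{F}^{\bm s}$). Your outline is precisely the standard Kashiwara--Lusztig strategy used in those references, so in spirit you are reproducing the intended argument.

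That said, your existence step is not quite right. You begin from $\bm\mu$, which is \emph{not} bar-invariant, and subtract $\mathbb{Z}[v,v^{-1}]$-multiples of the inductively constructed $G^{\bm s}_e(\bm\la)$ to force coefficients into $v\mathbb{Z}[v]$; you then assert the result is ``bar-invariant by construction''. But subtracting a non-bar-invariant scalar times a bar-invariant vector does not produce a bar-invariant vector, and your decomposition $a=(p+\overline p)+q$ with $p,q\in v\mathbb{Z}[v]$ fails already for constants. The clean fix is the standard one: express $\overline{\bm\mu}-\bm\mu=\sum_{\bm\la\triangleleft\bm\mu} b_{\bm\la}\,G^{\bm s}_e(\bm\la)$ using the inductive basis, note that $\overline{b_{\bm\la}}=-b_{\bm\la}$ because bar is an involution, write $b_{\bm\la}=c_{\bm\la}-\overline{c_{\bm\la}}$ with $c_{\bm\la}\in v\mathbb{Z}[v]$, and set $G^{\bm s}_e(\bm\mu)=\bm\mu+\sum_{\bm\la\triangleleft\bm\mu}c_{\bm\la}\,G^{\bm s}_e(\bm\la)$; this is genuinely bar-invariant and has the required coefficients. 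Your sketch of triangularity for the bar involution via the quasi-$R$-matrix is also rather hand-wavy (one must track carefully which factor receives $e_i$'s versus $f_i$'s and why the net effect decreases dominance), but this is exactly the technical input supplied by the cited references.
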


In principle, the extension of the bar involution gives an algorithm for computing the canonical basis of $M^{\bm s}$. However, we use an algorithm by Fayers \cite{Fay10} that generalises the LLT algorithm (see \cite{LLT96} for more details and examples in this case) and works better for the purposes of this paper. We recall it in the next subsection.

\subsection{An LLT-type algorithm for $r\geq 1$}\label{algsec}

Fayers' algorithm approach for computing the canonical basis of $M^{\bm s}$ relies on computing the canonical basis for a module lying in between $M^{\bm s}$ and $\mathcal{F}^{\bm s}$. The way $\mathcal{F}^{\bm s}$ is defined and the choice of a coproduct on $\UU$ mean that there is an isomorphism
\begin{alignat*}2
\mathcal{F}^{\bm s}&\overset{\sim}\longrightarrow\,\,&\mathcal{F}^{(s_1)}&\otimes\dots\otimes\mathcal{F}^{(s_r)}\\
\intertext{defined by linear extension of}
\bm\la&\longmapsto&(\la^{(1)})&\otimes\dots\otimes(\la^{(r)}).
\end{alignat*}
Thus, we identify $\mathcal{F}^{\bm s}$ and $\mathcal{F}^{(s_1)}\otimes\dots\otimes\mathcal{F}^{(s_r)}$ via this isomorphism. Since each $\mathcal{F}^{(s_k)}$ contains a submodule $M^{(s_k)}$ isomorphic to $V(\Lambda_{s_k})$, $\mathcal{F}^{\bm s}$ contains a submodule $M^{\otimes\bm s}=M^{(s_1)}\otimes\dots\otimes M^{(s_r)}$ isomorphic to $V(\Lambda_{s_1})~\otimes~\dots~\otimes~V(\Lambda_{s_r})$.  Fayers' algorithm computes the canonical basis of $M^{\otimes\bm s}$.

Before going on, we need the following result on canonical basis coefficients that is crucial in order to apply one of the steps of the algorithm. Recall that for any $r$-multipartition $\bm\la$ we define $\bm\la_{-}=(\la^{(2)}, \ldots, \la^{(r)})$; we also define $\bm s_- = (s_2, \ldots, s_r)\in I^{r-1}$.

\begin{prop}\label{fcomp}\cite[Corollary 3.2]{Fay10}
Suppose $\bm s\in I^r$ for $r>1$ and $\bm\mu\in\mathcal{P}^r$ with $\mu^{(1)}=\varnothing$.  If we write
\begin{align*}
G^{\bm s_-}_e(\bm\mu_-) &= \sum_{\bm\nu\in\mathcal{P}^{r-1}}d^{\bm s_-}_{\bm\nu\bm\mu_-}(v)\bm\nu,\\
\intertext{then}
G^{\bm s}_e(\bm\mu) &= \sum_{\bm\nu\in\mathcal{P}^{r-1}}d^{\bm s_-}_{\bm\nu\bm\mu_-}(v)(\varnothing,\bm\nu).
\end{align*}
\end{prop}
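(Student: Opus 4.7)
The plan is to exploit the uniqueness part of \cref{decdom}: produce a candidate vector and verify the three defining properties of $G^{\bm s}_e(\bm\mu)$. Specifically, set
$$X \;:=\; \sum_{\bm\nu\in\mathcal{P}^{r-1}}d^{\bm s_-}_{\bm\nu\bm\mu_-}(v)\,(\varnothing,\bm\nu)\;\in\;\mathcal{F}^{\bm s},$$
which under the identification $\mathcal{F}^{\bm s}\cong\mathcal{F}^{(s_1)}\otimes\mathcal{F}^{\bm s_-}$ corresponds to $(\varnothing)\otimes G^{\bm s_-}_e(\bm\mu_-)$. If one can show $X=G^{\bm s}_e(\bm\mu)$, then comparing coefficients of the standard basis yields $d^{\bm s}_{(\varnothing,\bm\nu)\bm\mu}(v)=d^{\bm s_-}_{\bm\nu\bm\mu_-}(v)$, which is the claim.

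The triangularity conditions are immediate. The coefficient of $\bm\mu=(\varnothing,\bm\mu_-)$ in $X$ is $d^{\bm s_-}_{\bm\mu_-\bm\mu_-}(v)=1$, and every other coefficient equals $d^{\bm s_-}_{\bm\nu\bm\mu_-}(v)\in v\mathbb{Z}[v]$ for some $\bm\nu\neq\bm\mu_-$. For the dominance property, a direct unpacking of the definition shows that for any $(r-1)$-multipartitions $\bm\nu,\bm\mu_-$ one has $(\varnothing,\bm\mu_-)\trianglerighteq(\varnothing,\bm\nu)$ if and only if $\bm\mu_-\trianglerighteq\bm\nu$; hence every multipartition occurring in $X$ with non-zero coefficient is dominated by $\bm\mu$. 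Conversely, if $\bm\la\trianglelefteq(\varnothing,\bm\mu_-)$ the dominance inequality at $(j,i)=(2,0)$ forces $|\lambda^{(1)}|\leq|\mu^{(1)}|=0$, so only terms with $\lambda^{(1)}=\varnothing$ are allowed, matching the support of $X$.

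The main obstacle is bar-invariance of $X$. This should be derived from the Brundan--Kleshchev extension of the bar involution to $\mathcal{F}^{\bm s}$, which on a pure tensor is given by $\overline{x\otimes y}=\Theta\cdot(\bar x\otimes \bar y)$, where $\Theta=\sum_{\nu\geq 0}\Theta_\nu$ is the quasi-$R$-matrix with $\Theta_0=1$ and $\Theta_\nu\in U_v^+(\widehat{\mathfrak{sl}}_e)_\nu\otimes U_v^-(\widehat{\mathfrak{sl}}_e)_{-\nu}$. The key observation is that $e_i(\varnothing)=0$ in $\mathcal{F}^{(s_1)}$ for every $i\in I$, so any element of $U_v^+(\widehat{\mathfrak{sl}}_e)$ of strictly positive weight annihilates $(\varnothing)$. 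Thus $\Theta_\nu\bigl((\varnothing)\otimes w\bigr)=0$ for all $\nu\neq 0$ and any $w$. Combining this with bar-invariance of $G^{\bm s_-}_e(\bm\mu_-)$ and the (easily checked) bar-invariance of $(\varnothing)$ in $\mathcal{F}^{(s_1)}$ — which follows from the $r=1$ case of \cref{decdom}, since the only multipartition dominated by $(\varnothing)$ is $(\varnothing)$ itself — one obtains
$$\overline{X}\;=\;\Theta\bigl((\varnothing)\otimes\overline{G^{\bm s_-}_e(\bm\mu_-)}\bigr)\;=\;(\varnothing)\otimes G^{\bm s_-}_e(\bm\mu_-)\;=\;X.$$

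With all three properties verified, the uniqueness part of \cref{decdom} gives $X=G^{\bm s}_e(\bm\mu)$, completing the proof. The only technical point requiring care is the triangular form of the quasi-$R$-matrix and its compatibility with the Brundan--Kleshchev bar involution on $\mathcal{F}^{\bm s}$; once that structural input is in hand, every remaining step reduces to straightforward combinatorics of the dominance order.
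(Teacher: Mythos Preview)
The paper does not supply its own proof of this proposition; it is quoted verbatim as \cite[Corollary 3.2]{Fay10}. Your argument is correct and is essentially the proof one finds in Fayers' paper: exhibit the candidate $X=(\varnothing)\otimes G^{\bm s_-}_e(\bm\mu_-)$, observe that the triangularity and dominance conditions are inherited componentwise, and deduce bar-invariance from the fact that $(\varnothing)$ is a highest-weight vector in $\mathcal{F}^{(s_1)}$, so the positive-weight pieces of the quasi-$R$-matrix annihilate it. Uniqueness in \cref{decdom} then forces $X=G^{\bm s}_e(\bm\mu)$.

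The only point where you should tighten the exposition is the appeal to the tensor-product formula $\overline{x\otimes y}=\Theta(\bar x\otimes\bar y)$. You are implicitly using that the Brundan--Kleshchev bar involution on $\mathcal{F}^{\bm s}$ is compatible with the factorisation $\mathcal{F}^{\bm s}\cong\mathcal{F}^{(s_1)}\otimes\mathcal{F}^{\bm s_-}$ in exactly this way, where the bar on the second factor is again the Brundan--Kleshchev one; this is true but deserves an explicit citation (e.g.\ to \cite{BK09} or \cite[\S3]{Fay10}) rather than being asserted. Your ``conversely'' paragraph about $\lambda^{(1)}=\varnothing$ is not needed for the argument, since you only have to verify the three defining properties for $X$, not describe all multipartitions dominated by $\bm\mu$.
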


The canonical basis elements $G^{(s_k)}_e(\mu)$ indexed by $e$-regular partitions $\mu$ form a basis for $M^{(s_k)}$, hence the tensor product $M^{(s_1)}\otimes\dots\otimes M^{(s_r)}$ has a basis consisting of all vectors $G^{(s_1)}_e(\mu^{(1)})\otimes\dots\otimes G^{(s_r)}_e(\mu^{(r)})$, where $\mu^{(1)},\dots,\mu^{(r)}$ are $e$-regular partitions. Translating this to the Fock space $\mathcal{F}^{\bm s}$, we find that $M^{\otimes \bm s}$ has a basis consisting of vectors
\[H^{\bm s}_e(\bm\mu) = \sum_{\bm\la\in\mathcal{P}^r}d^{(s_1)}_{\la^{(1)}\mu^{(1)}}(v)\dots d^{(s_r)}_{\la^{(r)}\mu^{(r)}}(v)\bm\la\]
for all $e$-multiregular multipartitions $\bm\mu$.  In fact, we have the following.
\begin{prop}\cite[Proposition 4.2]{Fay07}\label{algo}
The canonical basis vectors $G^{\bm s}_e(\bm\mu)$ indexed by $e$-multiregular $r$-multipartitions $\bm\mu$ form a basis for the module $M^{\otimes\bm s}$.
\end{prop}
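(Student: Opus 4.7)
The plan is to prove the inclusion $G^{\bm s}_e(\bm\mu)\in M^{\otimes\bm s}$ for every $e$-multiregular multipartition $\bm\mu$, and then to deduce the basis property by a cardinality count on each weight space of $M^{\otimes\bm s}$. For the inclusion I would set up a double induction: primary on the level $r$, secondary on $|\bm\mu|$, with a further refinement by the dominance order within a fixed weight space. The base case $r=1$ is immediate, since $M^{\otimes\bm s}=M^{(s_1)}$ and its canonical basis is by definition indexed by the $e$-regular partitions, which are precisely the $e$-multiregular $1$-multipartitions; the case $\bm\mu=\bm\varnothing$ at any level is trivial since $G^{\bm s}_e(\bm\varnothing)=\bm\varnothing\in M^{\otimes\bm s}$.

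For the inductive step at level $r$ with $|\bm\mu|>0$, I split into two subcases. If $\mu^{(1)}=\varnothing$, \cref{fcomp} gives
\[G^{\bm s}_e(\bm\mu)=\sum_{\bm\nu\in\mathcal{P}^{r-1}}d^{\bm s_-}_{\bm\nu\bm\mu_-}(v)\,(\varnothing,\bm\nu),\]
which under the isomorphism $\mathcal{F}^{\bm s}\cong \mathcal{F}^{(s_1)}\otimes\mathcal{F}^{\bm s_-}$ corresponds to the pure tensor $G^{(s_1)}_e(\varnothing)\otimes G^{\bm s_-}_e(\bm\mu_-)$. Since $\bm\mu_-$ is an $e$-multiregular $(r-1)$-multipartition, the primary induction on $r$ gives $G^{\bm s_-}_e(\bm\mu_-)\in M^{\otimes\bm s_-}$, and hence $G^{\bm s}_e(\bm\mu)\in M^{(s_1)}\otimes M^{\otimes\bm s_-}=M^{\otimes\bm s}$.

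If instead $\mu^{(1)}\neq\varnothing$, I would invoke Fayers' LLT-type algorithm to produce $i\in I$, $m\geq 1$, and an $e$-multiregular $\bm\nu$ with $|\bm\nu|=|\bm\mu|-m$ satisfying
\[f_i^{(m)}G^{\bm s}_e(\bm\nu)=G^{\bm s}_e(\bm\mu)+\sum_{\substack{\bm\rho\text{ $e$-multiregular}\\ \bm\rho\triangleleft\bm\mu,\,|\bm\rho|=|\bm\mu|}}c_{\bm\rho}(v)\,G^{\bm s}_e(\bm\rho),\]
for certain bar-invariant coefficients $c_{\bm\rho}(v)$. The secondary induction on $|\bm\mu|$ puts $G^{\bm s}_e(\bm\nu)\in M^{\otimes\bm s}$, and since $M^{\otimes\bm s}$ is a $\UU$-submodule of $\mathcal{F}^{\bm s}$ it is stable under $f_i^{(m)}$; by the tertiary induction on dominance each $G^{\bm s}_e(\bm\rho)$ on the right-hand side also lies in $M^{\otimes\bm s}$, so rearranging yields $G^{\bm s}_e(\bm\mu)\in M^{\otimes\bm s}$. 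The main obstacle here is the combinatorial content of Fayers' algorithm, namely that $i$, $m$, $\bm\nu$ can indeed be chosen so that $\bm\nu$ and all the $\bm\rho$'s appearing in the subtraction step are multiregular; for this I would appeal directly to \cite{Fay10} rather than redo the combinatorics.

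To conclude, \cref{decdom} says each $G^{\bm s}_e(\bm\mu)$ has the form $\bm\mu+\sum_{\bm\la\triangleleft\bm\mu}d^{\bm s}_{\bm\la\bm\mu}(v)\,\bm\la$, so the set $\{G^{\bm s}_e(\bm\mu):\bm\mu\text{ $e$-multiregular}\}$ is linearly independent. On each weight space of $M^{\otimes\bm s}$ this set has the same (finite) cardinality as the basis $\{H^{\bm s}_e(\bm\mu):\bm\mu\text{ $e$-multiregular}\}$ of that weight space, and since each of its elements lies in $M^{\otimes\bm s}$ by the inclusion established above, it must in fact be a basis.
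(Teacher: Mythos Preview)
The paper does not supply its own proof of this proposition; it is quoted from Fayers and then \emph{used} to justify the algorithm, in particular to guarantee that every label $\bm\nu$ arising in step~3(c) is $e$-multiregular (see the paragraph immediately following the statement of the algorithm: ``Hence by \cref{algo}, $A(\bm\mu)$ is a $\mathbb{Q}(v+v^{-1})$-linear combination of canonical basis vectors $G^{\bm s}_e(\bm\nu)$ with $\bm\nu\in\mathcal{R}^r$''). This is exactly where your argument becomes circular.

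In your subcase $\mu^{(1)}\neq\varnothing$ you write
\[
f_i^{(m)}G^{\bm s}_e(\bm\nu)=G^{\bm s}_e(\bm\mu)+\sum_{\substack{\bm\rho\ e\text{-multiregular}\\ \bm\rho\triangleleft\bm\mu}}c_{\bm\rho}(v)\,G^{\bm s}_e(\bm\rho)
\]
and say you would appeal to \cite{Fay10} for the fact that only $e$-multiregular $\bm\rho$ occur. But in \cite{Fay10} (and in this paper's exposition) that fact is deduced \emph{from} \cref{algo}: one knows $f_i^{(m)}G^{\bm s}_e(\bm\nu)\in M^{\otimes\bm s}$, and then invokes the proposition to conclude that its canonical-basis expansion involves only multiregular labels. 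Without already knowing \cref{algo}, bar-invariance and \cref{decdom} only give $f_i^{(m)}G^{\bm s}_e(\bm\nu)=G^{\bm s}_e(\bm\mu)+\sum_{\bm\rho\triangleleft\bm\mu}c_{\bm\rho}(v)G^{\bm s}_e(\bm\rho)$ with $\bm\rho$ ranging over \emph{all} multipartitions, and your tertiary induction on dominance says nothing about the non-multiregular terms. The argument in \cite{Fay10} runs in the opposite direction: one first shows that the bar involution on $M^{\otimes\bm s}$ (built from the bar involutions on the factors $M^{(s_k)}$ via Lusztig's quasi-$\mathcal{R}$-matrix) is compatible with the Brundan--Kleshchev bar involution on $\mathcal{F}^{\bm s}$; since $\{H^{\bm s}_e(\bm\mu):\bm\mu\in\mathcal{R}^r\}$ is a unitriangular basis of $M^{\otimes\bm s}$, the module then has its own canonical basis indexed by $\mathcal{R}^r$, and uniqueness identifies it with $\{G^{\bm s}_e(\bm\mu):\bm\mu\in\mathcal{R}^r\}$. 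Only afterwards is the algorithm justified. Your treatment of the case $\mu^{(1)}=\varnothing$ and your final independence/cardinality argument are fine; it is the second subcase that needs a non-circular input.
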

In particular, \cref{algo} implies that the span of the canonical basis vectors is a $\mathcal{U}$-submodule of $\mathcal{F}^{\bm s}$. This enables the algorithm to work recursively constructing canonical basis vectors labelled by $e$-multiregular multipartitions.
As in the LLT algorithm for $r=1$, in order to construct the canonical basis vector $G^{\bm s}_e(\bm\mu)$, we construct an auxiliary vector $A(\bm\mu)$ which is bar-invariant, and which we know equals $G^{\bm s}_e(\bm\mu)$ plus a linear combination of `lower' canonical basis vectors; the bar-invariance of $A(\bm\mu)$, together with dominance properties, allows these lower terms to be stripped off. Moreover, we have that $A(\bm\mu)$ lies in $M^{\otimes\bm s}$; then \cref{algo} tells that all the canonical basis vectors occurring in $A(\bm\mu)$ are labelled by $e$-multiregular multipartitions, and therefore we can assume that these have already been constructed.

In fact, the proof of Proposition \ref{algo}, together with the LLT algorithm for partitions, gives us an LLT-type algorithm for Ariki-Koike algebras. We formalise this as follows.

First we define a partial order on multipartitions which is finer than the dominance order: write $\bm\mu\succcurlyeq\bm\nu$ if either $|\mu^{(1)}|>|\nu^{(1)}|$ or $\mu^{(1)}\trianglerighteq\nu^{(1)}$.  Next we describe the steps of our recursive algorithm on the partial order $\succcurlyeq$. If $r>1$, when computing $G^{\bm s}_e(\bm\mu)$ for $\bm\mu\in\mathcal{R}^r$, we assume that we have already computed the vector $G^{\bm s_-}_e(\bm\mu_-)$ and the vectors $G^{\bm s}_e(\bm\nu)$ for all $\bm\nu\in\mathcal{R}^r$ with $\bm\mu\succ\bm\nu$.

\begin{enumerate}
\item[(1)]
If $\bm\mu=\bm\varnothing$,  then $G^{\bm s}_e(\bm\mu)=\bm\varnothing$.
\item[(2)]
If $\bm\mu\neq\bm\varnothing$ but $\mu^{(1)}=\varnothing$, then compute the canonical basis vector $G^{\bm s_-}_e(\bm\mu_-)$. Then,  by \cref{fcomp}, $G^{\bm s}_e(\bm\mu)$ is given by
$$G^{\bm s}_e(\bm\mu) = \sum_{\bm\nu\in\mathcal{P}^{r-1}}d^{\bm s_-}_{\bm\nu\bm\mu_-}(v)(\varnothing,\bm\nu).$$
\item[(3)]
If $\mu^{(1)}\neq\varnothing$, then apply the following procedure.
\begin{enumerate}
\item
Let $\bm\mu_0=(\varnothing,\mu^{(2)},\dots,\mu^{(r)})$, note that $\bm\mu\succ\bm\mu_0$ and compute $G^{\bm s}_e(\bm\mu_0)$.
\item
Let $m_1,\ldots,m_t$ be the sizes of the non-empty ladders of $\mu^{(1)}$ in increasing order, and $i_1,\ldots,i_t$ be their $e$-residues.  Define $A(\bm \mu) = f_{i_t}^{(m_t)}\ldots f_{i_1}^{(m_1)}G^{\bm s}_e(\bm\mu_0)$ and write $A(\bm\mu)=\sum_{\bm\nu\in\mathcal{P}^r}a_{\bm\nu}\bm\nu$.  
\item\label{stepc}
If there is no $\bm\nu\neq\bm\mu$ for which $a_{\bm\nu}\notin v\mathbb{Z}[v]$, then stop.  Otherwise, take such a $\bm\nu$ which is maximal with respect to the dominance order, let $\alpha$ be the unique element of $\mathbb{Z}[v+v^{-1}]$ for which $a_{\bm\nu}-\alpha\in v\mathbb{Z}[v]$, replace $A(\bm\mu)$ with $A(\bm\mu)-\alpha G^{\bm s}_e(\bm\nu)$, and repeat. The vector that remains at the end of the process is $G^{\bm s}_e(\bm\mu)$.
\end{enumerate}
\end{enumerate}

The vector $A(\bm\mu)$ computed in step 3(b) is a bar-invariant element of $M^{\bm s}$, because $G^{\bm s}_e(\bm\mu_0)$ is. Hence by \cref{algo}, $A(\bm\mu)$ is a $\mathbb{Q}(v+v^{-1})$-linear combination of canonical basis vectors $G^{\bm s}_e(\bm\nu)$ with $\bm\nu\in\mathcal{R}^r$.  Furthermore, the action of $f_i$ on a multipartition and the combinatorial results used in the LLT algorithm imply that $a_{\bm\mu}=1$, and that if $a_{\bm\la}\neq0$, then $\bm\mu\succcurlyeq\bm\la$.  
In particular, the partition $\bm\nu$ appearing in step 3(c) satisfies $\bm\mu\succ\bm\nu$; moreover, when $\alpha G^{\bm s}_e(\bm\nu)$ is subtracted from $A(\bm\mu)$, the condition that $a_{\bm\mu}=1$ and $a_{\bm\la}$ is non-zero only for $\bm\mu\succcurlyeq\bm\la$ remains true (because of Proposition \ref{decdom} and the fact that the order $\succcurlyeq$ refines the dominance order). So we can repeat step 3(c) ending up with the desired canonical basis vector $G^{\bm s}_e(\bm\mu)$. For examples, we refer the reader to \cite[Subsection 5.1]{Fay10} and \cite[Example 3.7]{DellA24b}.

\section{Addition of an empty runner}\label{sec:addEmptyRun}
{In this section, we build the combinatorial setting needed to prove our main result. Starting from the definition in \cite{Frunrem} and in \cite{DellA24b}, we define the addition of an \emph{empty} runner and prove some useful properties of adding empty runners. Set $e\geq 2$.}

\subsection{Addition of a runner for $r=1$}
In \cite{Frunrem}, Fayers defines the addition of a runner for the abacus display of a partition given a non-negative integer  $k$. We generalise this definition to any $k \in \Z$.

Given a partition $\la$, we construct a new partition $\la^{+k}$ for an integer $k$ as follows.
Let $a\geq l(\la)$ and construct the abacus configuration for $\la$ with charge $a$. Choose $k\in \Z$ such that $a+k\geq 0$, and write $a+k=ce+d$, with $c\ge0$ and $0 \leq d\leq e-1$. Add a runner to the abacus display $\mathrm{Ab}_e^a(\la)$ immediately to the left of runner $d$ with $c$ beads in the topmost $c$ positions, i.e. the position labelled $d, e+1+d, \ldots, (c-1)(e+1)+d$ in the usual labelling for an abacus with $e+1$ runners. The partition whose abacus display is obtained is $\la^{+k}$.

We now give a definition that partially motivates the title of the present paper. Beforehand we observe the following.

\begin{rmk}\label{smempty}
The smallest empty position in the abacus display of a partition $\la$ with charge $a$ is at position $a-l(\la)$.
\end{rmk}

\begin{defn}\label{emptyrunpar}
If the smallest empty position on the inserted runner of $\lak$ is the smallest space in the whole abacus display, then we say that the inserted runner is \emph{empty}.
\end{defn}

The following lemma gives an equivalent condition for an inserted runner to be empty. Consider a partition $\la$ and its abacus display with charge $a\ge l(\la)$. Take $k\in\Z$ such that $a+k\ge0$ and write $a+k=ce+d$ with $c\ge0$ and $0\le d\le e-1$. Construct the partition $\lak$ as above.

\begin{lem}\label{emptyeq}
    The inserted runner is empty if and only if $l(\la)\leq-k$.
\end{lem}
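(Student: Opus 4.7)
The strategy is to reduce the question about the $(e+1)$-abacus of $\lak$ to a question about the $e$-abacus of $\la$, exploiting the fact that only one runner has been added. By construction the topmost $c$ positions of the inserted runner are beads and all positions below them are empty, so in the usual labelling for an abacus with $e+1$ runners the smallest space on the inserted runner sits at position $c(e+1)+d$. Hence the inserted runner is empty in the sense of \cref{emptyrunpar} if and only if every one of the positions $0,1,\ldots,c(e+1)+d-1$ of $\mathrm{Ab}_{e+1}^{a+c}(\lak)$ is occupied by a bead.

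Next I would translate this into a statement about $\mathrm{Ab}_e^a(\la)$. Among the $c(e+1)+d$ positions in question, exactly $c$ lie on the inserted runner and are beads by construction. The remaining $c(e+1)+d-c=ce+d=a+k$ positions lie off the inserted runner; since the other $e$ runners of $\mathrm{Ab}_{e+1}^{a+c}(\lak)$ are verbatim copies of the corresponding runners of $\mathrm{Ab}_e^a(\la)$, these positions correspond bijectively, preserving bead/space status, to the first $a+k$ positions $0,1,\ldots,a+k-1$ of $\mathrm{Ab}_e^a(\la)$. Thus the inserted runner is empty if and only if each of these first $a+k$ old positions is a bead.

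Finally I would invoke \cref{smempty}: the smallest empty position in $\mathrm{Ab}_e^a(\la)$ is $a-l(\la)$, so the condition ``positions $0,\ldots,a+k-1$ of $\mathrm{Ab}_e^a(\la)$ are all beads'' is equivalent to $a-l(\la)\geq a+k$, i.e.\ $l(\la)\leq -k$, as required.

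The only mildly delicate step is the middle one: verifying that the bijection between the non-inserted part of the new abacus (restricted to positions $<c(e+1)+d$) and the first $a+k$ old positions is really a bijection and really preserves bead/space status. This is pure bookkeeping, handled by splitting into cases according to whether the new runner label is less than or greater than $d$ and tracking how labels shift when the extra runner is inserted, but it is the place where one is most likely to miscount.
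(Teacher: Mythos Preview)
Your proof is correct. Both you and the paper ultimately reduce to comparing $a+k$ with the smallest empty position $a-l(\la)$ of the old abacus via \cref{smempty}, so the core idea is the same. The execution differs slightly: you argue by a single chain of equivalences, counting that exactly $c$ of the positions $0,\ldots,c(e+1)+d-1$ lie on the inserted runner and bijecting the remaining $ce+d=a+k$ positions with the first $a+k$ positions of $\mathrm{Ab}_e^a(\la)$, which handles both directions at once. The paper instead treats the two implications separately and, for the forward direction, passes through the auxiliary observation $l(\lak)\ge l(\la)$ and applies \cref{smempty} to $\lak$ (with charge $a+c$) rather than to $\la$. Your bijection argument avoids that detour; the bookkeeping you flag as ``mildly delicate'' is exactly the verification that old positions $0,\ldots,a+k-1$ are precisely the non-inserted new positions below $c(e+1)+d$, which is straightforward once one notes that level-$\ell$, runner-$i$ in the old abacus becomes level-$\ell$, runner-$g(i)$ in the new one.
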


\begin{proof}
Suppose that the inserted runner is empty. Indeed, since the smallest empty position in the inserted runner is at $c(e+1)+d$, by \cref{smempty} we have that \[c(e+1)+d\le a+c-l(\lak).\] We observe that $l(\lak)\ge l(\la)$. Since the inserted runner is empty, all the beads occurring after the smallest space in the abacus of $\la$ still occur after the smallest space in the abacus of $\lak$. It follows that $\lak$ has at least the same number of parts of $\la$.

Hence we have that \[a+c-l(\la)\ge a+c-l(\lak)\ge c(e+1)+d=a+c+k,\]
from which we gain the desired inequality.

Conversely, by assumption we have that \[ce+d=a+k\le a-l(\la).\]
Together with \cref{smempty}, this implies that the smallest space in the abacus display of $\la$ is at least at the intersection of runner $d$ and level $c$. Since by definition the inserted runner of $\lak$ contains $c$ beads in the topmost positions and is inserted to the left of runner $d$, it follows that that the smallest space in the abacus of $\lak$ is exactly at position $c(e+1)+d$. Hence the inserted runner is empty. 
\end{proof}

We extend the operator $^{+k}$ linearly to the whole of the Fock space. We can now state an empty runner removal theorem for the Hecke algebra of the symmetric group $\mathfrak{S}_n$.

\begin{thm}\cite[Theorem 4.5]{JM02}\label{Emptyrunrem_level1} Let $\mu$ be a partition of $n$ and let $k \in \Z$ be such that $0 \leq l(\mu)\leq -k$. Then $G_{e+1}(\mu^{+k}) = G_e(\mu)^{+k}$.
\end{thm}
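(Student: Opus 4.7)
My plan is to prove this via the LLT algorithm for $r = 1$, proceeding by induction on the number of non-empty ladders of $\mu$. The base case $\mu = \varnothing$ is essentially direct: since $G_e(\varnothing) = \varnothing$, I would need to check that $G_{e+1}(\varnothing^{+k}) = \varnothing^{+k}$. The partition $\varnothing^{+k}$ has an abacus display consisting of the original empty runners together with an inserted runner carrying $c$ beads in its topmost positions (where $a + k = c(e+1) + d$), and \cref{emptyeq} guarantees emptiness. Bar-invariance and the coefficient condition then hold trivially for this single standard basis element.

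For the inductive step, the first task is to establish a combinatorial correspondence between the ladders of $\mu$ in the $e$-setting and the ladders of $\mu^{+k}$ in the $(e+1)$-setting. Because the inserted runner is empty, every ladder of $\mu^{+k}$ that meets $[\mu^{+k}]$ corresponds bijectively to a non-empty ladder of $\mu$ of matching size; the inserted runner contributes only ``phantom'' ladders intersecting no actual node. I would then compute how the $e$-residue of a ladder transforms into the $(e+1)$-residue after inserting the runner, obtaining a simple shift depending only on the position of the inserted runner relative to the ladder.

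The crucial step is a commutation lemma of the form
\[ \bigl( f_i^{(m)}\lambda \bigr)^{+k} \;=\; f_{i'}^{(m)}\bigl(\lambda^{+k}\bigr), \]
valid for every partition $\lambda$ arising in the LLT construction of $G_e(\mu)$, where $f_i^{(m)}$ acts from $U_v(\widehat{\mathfrak{sl}}_e)$, the operator $f_{i'}^{(m)}$ acts from $U_v(\widehat{\mathfrak{sl}}_{e+1})$, and $i'$ is the shifted residue described above. To prove this I would match standard-basis expansions on both sides: the ladder bijection induces a bijection between the partitions $\xi$ with $\lambda \xrightarrow{m:i} \xi$ and those $\eta$ with $\lambda^{+k} \xrightarrow{m:i'} \eta$, and the main technical calculation is to show that the exponents $N_i(\lambda,\xi)$ and $N_{i'}(\lambda^{+k},\eta)$ agree. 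This is precisely where the empty runner hypothesis does the work: the inserted runner carries no addable or removable $i'$-nodes, so it never affects the count of nodes above each moved bead.

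Granted the commutation lemma, iterating through the steps of the LLT algorithm yields $G_e(\mu)^{+k} = f_{i_t'}^{(m_t)}\cdots f_{i_1'}^{(m_1)}\varnothing^{+k}$ inside the $(e+1)$-Fock space, so $G_e(\mu)^{+k}$ is automatically bar-invariant in that setting. The remaining characterising properties of $G_{e+1}(\mu^{+k})$ listed in \cref{decdom} (unit coefficient on $\mu^{+k}$, other coefficients in $v\mathbb{Z}[v]$, and the dominance condition) transfer directly from those of $G_e(\mu)$, since $^{+k}$ acts diagonally on the standard basis and respects dominance in the relevant range. I expect the main obstacle to be the commutation lemma itself, and in particular the verification that $N_i(\lambda,\xi) = N_{i'}(\lambda^{+k},\eta)$: this will require careful bookkeeping of addable and removable nodes runner by runner, together with the systematic use of emptiness to discard contributions from the inserted runner.
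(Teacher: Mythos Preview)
The paper does not prove this theorem; it is quoted from \cite{JM02} and used as the base case of the induction on $r$ in \cref{canbasis_runrem}. So there is no proof in the paper to compare against directly, but the surrounding lemmas in \cref{sec:addEmptyRun} (which are stated for arbitrary $r$, including $r=1$) indicate what the correct argument looks like, and your proposal diverges from it at one essential point.

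Your commutation lemma
\[
\bigl(f_i^{(m)}\lambda\bigr)^{+k}=f_{i'}^{(m)}\bigl(\lambda^{+k}\bigr)
\]
is \emph{false} when $i=d$ (the residue corresponding to the position of the inserted runner). In that case moving a bead from runner $d-1$ to runner $d$ in the $e$-abacus becomes, after inserting the empty runner, a move across \emph{two} runners in the $(e+1)$-abacus: first from $d-1$ to the new runner $d$, then from $d$ to $d+1$. The correct identity is
\[
\bigl(f_d^{(m)}\lambda\bigr)^{+k}=\mathfrak{F}_{d+1}^{(m)}\mathfrak{F}_{d}^{(m)}\bigl(\lambda^{+k}\bigr),
\]
as in \cref{corieqd}. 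This also explains why your ladder bijection fails: the partition $\mu^{+k}$ generally has strictly more non-empty $(e+1)$-ladders than $\mu$ has $e$-ladders. For instance with $e=3$, $\mu=(1)$, $k=-3$ and charge $a=7$ one finds $\mu^{+k}=(3,1^2)$, which has five non-empty $4$-ladders while $\mu$ has a single $3$-ladder. The ``phantom ladder'' picture is therefore not accurate; the extra ladders of $\mu^{+k}$ genuinely meet $[\mu^{+k}]$ and arise precisely from splitting each $d$-step into two.

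With the corrected commutation identities (single operator for $i\neq d$, composite $\mathfrak{F}_{d+1}^{(m)}\mathfrak{F}_d^{(m)}$ for $i=d$), your outline can be repaired: one obtains $(A(\mu))^{+k}=\mathfrak{F}\cdot\varnothing^{+k}$ for the appropriate composite $\mathfrak{F}$, which is bar-invariant since $\varnothing^{+k}$ is an $(e+1)$-core and hence equals its own canonical basis vector. The remaining characterising properties then transfer as you describe, and the subtraction step in the LLT algorithm goes through by induction on dominance. This is essentially the shape of the James--Mathas argument and of the paper's higher-level generalisation.
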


\subsection{Addition of an empty runner to the empty partition}
In this subsection we focus our attention on the addition of an empty runner to the empty partition $\varnothing$. We fix $k$ a negative integer and we consider the $e$-abacus display of $\varnothing$ with charge $a$ such that $a+k\ge0$. Thus, the Fock space we work with in this section is $\mathcal{F}^s$ where $s\in I$ and $a\equiv s\  (\mod  e)$. We start with recalling how this abacus display looks like.
\begin{rmk}\label{emptyab}
The $e$-abacus configuration of $\varnothing$ with charge $a$ has all the beads as high as possible within their runners (indeed, $\varnothing$ is an $e$-core): runners from $0$ to $i$ consist of $h+1$ beads and runners from $i+1$ to $e-1$ consist of $h$ beads, for some $h\geq 0$. Note that $i\equiv a-1\ (\mod e)$.
\end{rmk}

\begin{rmk}
The partition $\varnothing^{+k}$ is an $(e+1)$-core. This follows by construction because the inserted runner has all the beads as high as possible.
\end{rmk}

\begin{prop}\label{empty+k}
Let $-k=k_1e+k_2$ with $k_1\geq0$ and $0 \leq k_2 \leq e-1$.
Then $$
    \varnothing^{+k}=\begin{cases}
    ((k_1+1)^{k_2}, k_1^e, \ldots,2^e,1^e) & \text{ if }  k_2\neq 0;\\
    (k_1^e, \ldots,2^e,1^e) & \text{ if } k_2=0.
\end{cases}$$
\end{prop}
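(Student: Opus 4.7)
The plan is to unfold the definition of $^{+k}$ for $\la = \varnothing$ and read the resulting partition directly off the abacus.

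I would first fix a convenient charge. Taking $a = a_0 e$ with $a_0 \geq k_1 + 2$, by \cref{emptyab} (applied with $i = e-1$, $h = a_0 - 1$) the $e$-abacus display of $\varnothing$ with charge $a$ has every one of its $e$ runners filled with $a_0$ beads in the topmost $a_0$ levels. Expressing $a + k = ce + d$ as required by the construction of $^{+k}$, and using $-k = k_1 e + k_2$, I obtain $(c,d) = (a_0 - k_1,\; 0)$ when $k_2 = 0$ and $(c,d) = (a_0 - k_1 - 1,\; e - k_2)$ when $k_2 > 0$; in both cases the new runner is inserted immediately to the left of runner $d$, carrying $c$ beads on its top $c$ levels.

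The resulting $(e+1)$-abacus display of $\varnothing^{+k}$ then has a very simple structure: every position is filled on each of the levels $0, 1, \ldots, c-1$; on each of the levels $c, c+1, \ldots, a_0 - 1$ there is exactly one empty position, namely the one at column $d$ (on the inserted runner, since each of the remaining runners still carries $a_0$ beads); and no positions are occupied on any level $\geq a_0$. Since only finitely many empty positions lie below the highest bead, $\varnothing^{+k}$ is well defined as a partition, and its parts can be recovered by counting, for each bead at level $\ell$ and column $m$, the number of smaller positions that are empty. This count equals $(\ell - c) + [m > d]$, where the first summand comes from the single empty position on each lower partial level $c, c+1, \ldots, \ell - 1$ and the second term counts the possible empty position at column $d$ within the bead's own level.

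The remaining step is to tally these contributions level by level. When $k_2 = 0$ the inserted runner is the leftmost one, so $d = 0$ and every bead in a partial level has $m \geq 1 > d$; hence each of the $k_1$ partial levels $a_0 - k_1, \ldots, a_0 - 1$ contributes $e$ equal parts of sizes $1, 2, \ldots, k_1$ respectively, yielding $(k_1^e, \ldots, 1^e)$. When $k_2 > 0$ the partial level $a_0 - 1 - j$ (for $0 \leq j \leq k_1$) contributes $k_2$ parts equal to $k_1 + 1 - j$ from the columns to the right of $d$ and $e - k_2$ parts equal to $k_1 - j$ from the columns to the left of $d$; adding the contributions from adjacent levels then yields exactly $k_2$ parts of size $k_1 + 1$ (only from $j = 0$) and exactly $e$ parts of each of the sizes $k_1, k_1 - 1, \ldots, 1$, giving the formula $((k_1+1)^{k_2}, k_1^e, \ldots, 1^e)$. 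The only delicate step in the argument is keeping careful track of which columns sit on which side of the inserted runner, and ensuring the bookkeeping agrees across both cases; once this is in hand, the stated formula follows directly.
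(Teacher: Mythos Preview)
Your proof is correct and takes a slightly different route from the paper's. The paper works with a general charge $a$, writes $a-1=he+i$, and then has to split into several sub-cases depending on how the inserted runner sits relative to the ``step'' between runners with $h+1$ beads and runners with $h$ beads; in each sub-case it identifies the number and position of the gaps before the largest bead and reads off the partition. By choosing the charge $a=a_0e$ to be a multiple of $e$, you arrange for the $e$-abacus of $\varnothing$ to be completely uniform (every runner has exactly $a_0$ beads), which collapses all of the paper's sub-cases into a single clean picture: the only empty positions below the top bead lie on the inserted runner, one per level from $c$ to $a_0-1$. Your explicit formula $(\ell-c)+[m>d]$ for the part attached to each bead then makes the bookkeeping mechanical. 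The payoff of your approach is brevity and a single unified count; the paper's approach has the minor advantage of showing directly that the answer is independent of the choice of charge, whereas you are implicitly relying on the remark (already made in the paper) that $\bm\la^{+\bm k}$ does not depend on this choice. One small imprecision: your formula $(\ell-c)+[m>d]$ is only valid for $\ell\geq c$; beads at levels $<c$ contribute parts of size $0$, not a negative number, but since these are discarded anyway this does not affect the argument.
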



\begin{proof}
Let $i$ be the label of the runner of the largest bead in the abacus display $\mathrm{Ab}_e^a(\varnothing)$. We have $a-1=he + i$ for some $h\geq0$. Then $a=he+i+1$. By Remark \ref{emptyab}, the abacus display of $\varnothing$ looks like the following:
$$
\begin{array}{ccccccc}
\mathclap{\scalebox{0.75}{0}}&\hspace{-0.4cm}\mathclap{\scalebox{0.75}{1}}&&\hspace{-0.25cm}\mathclap{\scalebox{0.75}{$i$}}&\hspace{-0.4cm}\mathclap{\scalebox{0.75}{$i{+}1$}}&&\mathclap{\scalebox{0.75}{$e{-}1$}}
\\
\sabacus(1.5,l,b,v,b,b,n)& \hspace{-0.4cm}
\sabacus(1.5,m,b,v,b,b,n)& \hspace{-0.25cm}
\sabacus(1.5,t,h,h,h,h,h)& \hspace{-0.25cm}
\sabacus(1.5,m,b,v,b,b,n)& \hspace{-0.4cm}
\sabacus(1.5,m,b,v,b,n,n)&\hspace{-0.25cm}
\sabacus(1.5,t,h,h,h,h,h)&\hspace{-0.25cm}
\sabacus(1.5,r,b,v,b,n,n)
\end{array}$$
with $h+1$ beads in runners from $0$ to $i$ and $h$ beads in runners from $i+1$ to $e-1$.



Suppose that $k_2=0$. If $i\neq e-1$, then $a+k=(h-k_1)e+i+1$. Hence, to get $\varnothing^{+k}$ we add a runner with $h-k_1$ beads in the topmost positions immediately to the left of runner $i+1$. Reading off the partition from $\mathrm{Ab}_{e+1}^{a+h-k_1}(\varnothing^{+k})$ we get the partition $(k_1^e, \ldots,2^e,1^e)$. Indeed, there are $k_1$ spaces before the largest bead in the abacus. Moreover, all these empty positions are in the inserted runner, so between two consecutive spaces there are exactly $e$ beads. If instead $i=e-1$, then $a+k=(h-k_1+1)e$ and a similar argument applies.

Now suppose that $k_2\ne0$. If $k_2\in \{1,\ldots,i+1\}$, then $a+k=(h-k_1)e+i+1-k_2$ and hence $\varnothing^{+k}$ is obtained by inserting a runner to the left of runner $i+1-k_2$ with $h-k_1$ beads in the topmost positions. Reading off the partition from $\mathrm{Ab}_{e+1}^{a+h-k_1}(\varnothing^{+k})$ we get the partition $((k_1+1)^{k_2}, k_1^e, \ldots,2^e,1^e)$. Indeed, there are $k_1+1$ spaces before the largest bead in the abacus, and all of them occur in the inserted runner. Moreover, there are $k_2$ beads after the largest of these spaces and $e$ beads between two of the other $k_1$ spaces. If $k_2\in \{i+2,\ldots,e-1\}$, then $a+k=(h-k_1-1)e+e+i+1-k_2$ and a similar argument applies.
\end{proof}

\begin{exe}\label{exempty+k}
Let $e=3$. Consider the abacus display of $\varnothing$ with charge $a=7$. 
\begin{enumerate}
\item Take $k=-3=-(1\cdot3+0)$. Then $\varnothing^{+(-3)}$ has the following abacus configuration
\begin{center}
\begin{tabular}{c}
        \hspace{-0.2cm}$\begin{matrix}
        0 &  \color{red}{1} &2 & 3
        \end{matrix}$\\
\sabacus(1.5,lmmr,bobb,bnbb,bnnn,nnnn,vvvv).
\end{tabular}
\end{center}
So, we have that $\varnothing^{+(-3)} = (1^3)$.

\item Take $k=-5=-(1\cdot3+2)$. Then $\varnothing^{+(-5)}$ has the following abacus configuration
\begin{center}
\begin{tabular}{c}
        \hspace{-0.2cm}$\begin{matrix}
        0 & 1 & \color{red}{2} & 3
        \end{matrix}$\\
\sabacus(1.5,lmmr,bbnb,bbnb,bnnn,nnnn,vvvv).
\end{tabular}
\end{center}
So, we have that $\varnothing^{+(-5)} = (2^2,1^3)$.

\item Take $k=-7=-(2\cdot3+1)$. Then $\varnothing^{+(-7)}$ has the following abacus configuration
\begin{center}
\begin{tabular}{c}
        \hspace{-0.2cm}$\begin{matrix}
        \color{red}{0} & 1 & 2 & 3
        \end{matrix}$\\
\sabacus(1.5,lmmr,nbbb,nbbb,nbnn,nnnn,vvvv).
\end{tabular}
\end{center}
So, we have that $\varnothing^{+(-7)} = (3,2^3,1^3)$.

\end{enumerate}
\end{exe}

{\begin{rmk}\label{remnoderes}
All the removable nodes of $\varnothing^{+k}$ have the same residue because if there is any removable node in $\varnothing^{+k}$, then it corresponds to a bead to the right of the inserted runner and so it has residue $d+1\ (\mod e+1)$.
\end{rmk}


Given $j\in\Z$ we denote by $\mathfrak{F}_{j}$ the generator of the quantised enveloping algebra $U_v(\widehat{\mathfrak{sl}}_{e+1})$. 
Notice that when we write $\mathfrak{F}_j$ we mean $\mathfrak{F}_{\overline{j}}$ where $\overline{j}$ is the residue of $j$ modulo $e+1$.
Note that $\mathfrak{F}_{j}$ is bar-invariant for all $j\in\Z$. The next general fact is helpful in finding an induction sequence of operators from $\varnothing$ to $\varnothing^{+k}$.

\begin{lem}\label{Fneg_indstep}
    Let $\mu$ be a partition and $k<0$ such that $l(\mu)\leq -k$. Let $a\ge-k$ and consider the abacus display $\mathrm{Ab}_{e}^a(\mu)$. Write $a+k+1=ce+d$ with $c\ge0$ and $0\le d\le e-1$. Then
    $$\mu^{+k}=\mathfrak{F}_{d}^{(\ell)}(\mu^{+(k+1)}),$$
    for a constant $\ell$ depending on $k$ and the $e$-core of $\mu$.
\end{lem}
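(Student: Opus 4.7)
My plan is to compare the $(e+1)$-abacus displays of $\mu^{+(k+1)}$ and $\mu^{+k}$ and read off the claimed identity directly from the bead moves that interpolate between them. By the construction in \cref{sec:addEmptyRun}, the $(e+1)$-abacus of $\mu^{+(k+1)}$ is obtained by inserting a runner of $c$ beads at position $d$ of $\mu$'s $e$-abacus, while that of $\mu^{+k}$ places the inserted runner at position $d-1$ when $d\geq 1$, or at position $e-1$ with one fewer bead when $d=0$. After choosing a common abacus charge (the natural charge $a+c$ works for both displays when $d\geq 1$; when $d=0$ I first increase the charge of $\mu^{+k}$'s display by one), the transition from $\mu^{+(k+1)}$ to $\mu^{+k}$ is realised by an explicit reshuffle of beads on two adjacent runners of the $(e+1)$-abacus.

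For $d\geq 1$ I will show that this reshuffle is precisely the swap of runners $d-1$ and $d$. Combined with \cref{smempty}, the hypothesis $l(\mu)\leq -k$ forces every position of $\mu$'s $e$-abacus strictly below $a+k=ce+d-1$ to be occupied; hence runner $d-1$ of $\mu$ carries beads at each of the topmost $c$ levels together with some number $\ell$ of extra beads at levels $\geq c$. Under the swap, precisely these $\ell$ extras migrate to the empty positions at the matching levels on runner $d$, each such move contributing an addable node of residue $d$. These are moreover the only addable $d$-nodes of $\mu^{+(k+1)}$, since levels $0,\dots,c-1$ of runner $d$ are already occupied by the inserted beads. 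Consequently $\mathfrak{F}_d^{(\ell)}(\mu^{+(k+1)})$ collapses to a single summand $v^{N_d(\mu^{+(k+1)},\mu^{+k})}\mu^{+k}$, and an application of \cref{Ncomps} pins $N_d$ down to $0$: no addable $d$-node of $\mu^{+k}$ lies above one of the added nodes, and no removable $d$-node of $\mu^{+(k+1)}$ does either.

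The case $d=0$ is the main technical obstacle; it follows the same blueprint with a wrap-around twist, because after the unit charge-shift the roles of runners $d-1$ and $d$ are played by runners $e$ and $0$ and the ``swap'' involves a one-level shift across the right-hand boundary of the abacus. I will carry out this bookkeeping directly, tracking how the inserted beads and $\mu$'s beads on old runner $e-1$ redistribute between runners $0$ and $e$ of the two displays, verifying that the resulting moves still add nodes of residue $0 \pmod{e+1}$ and that the counts controlling $N_d$ remain unchanged.

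Finally, $\ell$ counts beads on a fixed runner of $\mu$'s $e$-abacus exceeding the threshold $c$; since removing an $e$-rim hook moves a bead up within its own runner, this per-runner bead count coincides with that of the $e$-core of $\mu$. Hence $\ell$ depends only on $k$ (through $c$ and $d$) and on the $e$-core of $\mu$, as claimed.
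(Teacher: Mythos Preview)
Your proposal is correct and follows essentially the same approach as the paper's proof: both arguments split into the cases $d\geq 1$ and $d=0$, identify the passage from $\mu^{+(k+1)}$ to $\mu^{+k}$ as the swap of two adjacent runners in the $(e+1)$-abacus, use $l(\mu)\leq -k$ (via \cref{emptyeq}/\cref{smempty}) to show that runner $d-1$ of $\mu$ is full up to level $c-1$ so that $\mu^{+(k+1)}$ has no removable $d$-nodes, and then read off $\ell$ as the number of beads on runner $d-1$ beyond level $c$. You are slightly more explicit than the paper in verifying that the Fock-space coefficient $v^{N_d(\mu^{+(k+1)},\mu^{+k})}$ equals $1$ (the paper leaves this implicit), but this is a minor elaboration rather than a different route.
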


\begin{proof}

    If $d\neq 0$, the abacus display for $\mu^{+k}$ is obtained by inserting a runner containing $c$ beads in the topmost position to the immediate left of runner $d-1$, while the abacus display for $\mu^{+(k+1)}$ is obtained by adding such a runner to the immediate right of runner $d-1$ instead.
    If $d=0$, the abacus display for $\mu^{+k}$ is obtained by inserting a runner containing $c-1$ beads in the topmost positions to the immediate left of runner $e-1$, while the abacus display for $\mu^{+(k+1)}$ is obtained by inserting a runner with $c$ beads to the left of runner $0$, or equivalently, by inserting a runner with $c-1$ beads to the immediate right of runner $e-1$.
    Thus, in any case, the abacus display for $\mu^{+k}$ is obtained from the abacus display for $\mu^{+(k+1)}$ by swapping runner $d-1$ $(\mod e)$ and the inserted runner, i.e. simultaneously removing all removable $d$-nodes and adding all addable $d$-nodes of $\mu^{+(k+1)}$.
    
    By assumption $l(\mu)\le-k$ and so by \cref{emptyeq} the inserted runner of $\mu^{+k}$ is empty. It follows that the empty positions on runner $d-1$ of $\mathrm{Ab}_e^a(\mu)$ occur at levels $l\geq c$. Therefore $\mu^{+(k+1)}$ has no removable $d$-nodes and $\mu^{+k}$ is obtained by just adding all the addable $d$-nodes of $\mu^{+(k+1)}$. If $b$ denotes the number of beads on runner $d-1$ of $\mathrm{Ab}_e^a(\mu)$, then the number of addable $d$-nodes of $\mu^{+(k+1)}$ is $b-c$ (resp. $b-c+1$ if $d=0$).
    We conclude that $\mu^{+k}=\mathfrak{F}_{d}^{(\ell)}(\mu^{+(k+1)}),$ where $\ell=b-c$ (resp. $\ell=b-c+1$ if $d=0$). It is then clear that $\ell$ depends only on $k$, which gives $c$, and the core of $\mu$, which gives $b$.
\end{proof}

For $\ell\geq 1$ and $i\in \mathbb{Z}$, set
\begin{equation}
    \mathcal{G}^{(\ell)}_{i}:= \mathfrak{F}^{(\ell)}_{i-(e-1)} \ldots\mathfrak{F}^{(\ell)}_{i-1}  \mathfrak{F}^{(\ell)}_{i}.
\end{equation}
Observe that $\mathcal{G}^{(\ell)}_{i}$ is bar-invariant.

\begin{prop}\label{indseq}
Let $-k=k_1e+k_2$ with $k_1\geq0$ and $0 \leq k_2 \leq e-1$. Let $\alpha=a+k_1$. Then
    $$\mathfrak{F}^{(k_1+1)}_{{\alpha-k_2+1}} \ldots\mathfrak{F}^{(k_1+1)}_{{\alpha-1}}\mathfrak{F}^{(k_1+1)}_{{\alpha}}\mathcal{G}^{(k_1)}_{{\alpha-1}}\mathcal{G}^{(k_1-1)}_{{\alpha-2}} \ldots \mathcal{G}^{(1)}_{{a}}(\varnothing)=\varnothing^{+k},$$
where $\mathfrak{F}^{(k_1+1)}_{{\alpha-k_2+1}} \ldots\mathfrak{F}^{(k_1+1)}_{{\alpha-1}}\mathfrak{F}^{(k_1+1)}_{{\alpha}}$ occurs if and only if $k_2\neq0$.
\end{prop}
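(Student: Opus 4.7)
The plan is to prove the identity by induction on $|k|$. The base case $k = 0$ is immediate, since $k_1 = k_2 = 0$ makes the product of operators empty, so both sides equal $\varnothing$.

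For the inductive step, assuming the formula holds for $k + 1$, I would identify the single additional operator prepended on the left of the sequence for $k+1$ that produces the sequence for $k$. When $k_2 \geq 1$ this prepended operator is $\mathfrak{F}^{(k_1+1)}_{\alpha - k_2 + 1}$, and when $k_2 = 0$ (so $-k = k_1 e$) it is $\mathfrak{F}^{(k_1)}_{\alpha - e}$, the leftmost factor of the block $\mathcal{G}^{(k_1)}_{\alpha - 1}$. In the second case one uses that $-(k+1) = (k_1 - 1)e + (e - 1)$, so the sequence for $k + 1$ (with parameters $k_1' = k_1 - 1$ and $k_2' = e - 1$) begins with the remaining $e - 1$ factors of $\mathcal{G}^{(k_1)}_{\alpha - 1}$ followed by the blocks $\mathcal{G}^{(k_1 - 1)}_{\alpha - 2}, \ldots, \mathcal{G}^{(1)}_a$, which together with the prepended factor $\mathfrak{F}^{(k_1)}_{\alpha - e}$ reconstitute the sequence for $k$.

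By the inductive hypothesis, the sequence for $k + 1$ applied to $\varnothing$ produces $\varnothing^{+(k+1)}$, so the step reduces to verifying that the prepended operator sends $\varnothing^{+(k+1)}$ to $\varnothing^{+k}$. This is an application of Lemma \ref{Fneg_indstep}: writing $a + k + 1 = c e + d$, the lemma gives $\varnothing^{+k} = \mathfrak{F}^{(\ell)}_d(\varnothing^{+(k+1)})$ for a specific divided power $\ell$. I would identify this with the prepended operator by using Proposition \ref{empty+k} to write $\varnothing^{+(k+1)}$ in its explicit staircase form; a direct enumeration of its addable nodes, together with the residue computations using $-e \equiv 1$ and $-k_1 e \equiv k_1 \pmod{e+1}$, shows that exactly $k_1 + 1$ addable nodes have residue $\alpha - k_2 + 1 \pmod{e + 1}$ in the first case and exactly $k_1$ addable nodes have residue $\alpha - e \pmod{e+1}$ in the second. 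Adding all of them yields the partition $\varnothing^{+k}$ given by Proposition \ref{empty+k}, and the divided powers match the counts.

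The main obstacle is verifying that the $v$-coefficient arising from the action of $\mathfrak{F}^{(\ell)}$ is exactly $1$ at each step, so that the equality holds on the nose in the Fock space. By Proposition \ref{Ncomps} this coefficient is $v^{N_i}$, where $N_i$ is a sum over the added nodes of the number of addable $i$-nodes of the target above each, minus the number of removable $i$-nodes of the source above each. The staircase form of $\varnothing^{+(k+1)}$ and $\varnothing^{+k}$ ensures that no other addable or removable nodes of the relevant residue lie above those being added, so $N_i = 0$ and the coefficient equals $v^0 = 1$. The careful bookkeeping of addable and removable nodes in these staircase partitions, together with the matching of residue labels between the conventions of Lemma \ref{Fneg_indstep} and the proposition, constitutes the bulk of the technical work.
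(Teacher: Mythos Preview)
Your proposal is correct and follows essentially the same approach as the paper: induction on $-k$ with the same case split on whether $k_2=0$, reducing the inductive step to Lemma~\ref{Fneg_indstep} and reading off the value of $\ell$ from the description in Proposition~\ref{empty+k}. Your explicit discussion of why the $v$-coefficient equals $1$ is more careful than the paper, which absorbs this into the statement of Lemma~\ref{Fneg_indstep} (since $\varnothing^{+(k+1)}$ has no removable $d$-nodes and all addable $d$-nodes are added, both sums in $N_d$ vanish); but the underlying argument is the same.
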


\begin{proof}
We prove this by induction on $-k\geq 0$.
\begin{itemize}
    \item If $k=0$, then $\varnothing^{+0}=\varnothing$ by \Cref{empty+k}, so there is nothing to prove.
    \item Suppose $-k>0$. 
    By \cref{Fneg_indstep}, $\varnothing^{+k} = \mathfrak{F}_{d}^{(\ell)}(\varnothing^{+(k+1)})$ with $d\equiv\alpha-k_2+1\ (\mod e+1)$ and $\ell\ge0$. In particular, by the proof of \Cref{empty+k} we know that
    $$\ell= \begin{cases}
    k_1 & \text{if } k_2=0, \\
    k_1+1 & \text{if } k_2\ne0.
    \end{cases}$$
    
    If $k_2=0$, then $-k-1=(k_1-1)e+e-1$ and so by the induction hypothesis we have 
         \begin{align*}    
         \varnothing^{+(k+1)}&=\mathfrak{F}_{{\alpha-e+1}}^{(k_1)}\cdots \mathfrak{F}_{{\alpha-2}}^{(k_1)} \mathfrak{F}_{{\alpha-1}}^{(k_1)} \mathcal{G}_{{\alpha-2}}^{(k_1-1)} \ldots \mathcal{G}_{{a}}^{(1)}(\varnothing).
         \end{align*}
         Hence, by \Cref{Fneg_indstep}
         \begin{align*}
         \varnothing^{+k}
         &= \mathfrak{F}_{{\alpha-e}}^{(k_1)} \mathfrak{F}_{{\alpha-e+1}}^{(k_1)}\cdots \mathfrak{F}_{{\alpha-2}}^{(k_1)} \mathfrak{F}_{{\alpha-1}}^{(k_1)} \mathcal{G}_{{\alpha-2}}^{(k_1-1)} \ldots \mathcal{G}_{{a}}^{(1)}(\varnothing)\\
         &= \mathcal{G}_{{\alpha-1}}^{(k_1)}\mathcal{G}_{{\alpha-2}}^{(k_1-1)}  \ldots \mathcal{G}_{{a}}^{(1)}(\varnothing).
         \end{align*}
         
    If $k_2\neq0$, then $-k-1=k_1e+k_2-1$ and so by the induction hypothesis we have 
        $$\varnothing^{+(k+1)}= \mathfrak{F}_{{\alpha-k_2+2}}^{(k_1+1)}\cdots \mathfrak{F}_{{\alpha-1}}^{(k_1+1)} \mathfrak{F}_{{\alpha}}^{(k_1+1)} \mathcal{G}_{{\alpha-1}}^{(k_1)} \mathcal{G}_{{\alpha-2}}^{(k_1-1)} \ldots \mathcal{G}_{{a}}^{(1)}(\varnothing).$$
         Again by \Cref{Fneg_indstep} it follows that
        \[\varnothing^{+k}=\mathfrak{F}_{{\alpha-k_2+1}}^{(k_1+1)}\mathfrak{F}_{{\alpha-k_2+2}}^{(k_1+1)}\cdots \mathfrak{F}_{{\alpha-1}}^{(k_1+1)} \mathfrak{F}_{{\alpha}}^{(k_1+1)} \mathcal{G}_{{\alpha-1}}^{(k_1)} \mathcal{G}_{{\alpha-2}}^{(k_1-1)} \ldots \mathcal{G}_{{a}}^{(1)}(\varnothing).\]
\end{itemize}
\end{proof}

\begin{exe}
With the same notation and choices of Example \ref{exempty+k}(iii), the sequence of operators in \cref{indseq} acts on $\varnothing$ as shown below.
\begin{center}
\begin{tabular}{ccccccc}
  $\begin{array}{c}
  \begin{matrix}
        0 & 1 & 2 & 3
    \end{matrix}\\
  \sabacus(1.5, lmmr,bbbb,bbbn,nnnn,nnnn,nnnn,vvvv)
  \end{array}$
  &
  $\xrightarrow{\mathfrak{F}_{{1}}^{(1)}\mathfrak{F}_{2}^{(1)}\mathfrak{F}_{3}^{(1)}}$
  &
  $\begin{array}{c}
  \begin{matrix}
        0 & 1 & 2 & 3
    \end{matrix}\\ \sabacus(1.5, lmmr,bbbb,nbbb,nnnn,nnnn,nnnn,vvvv)
    \end{array}$
    &
    $\xrightarrow{\mathfrak{F}_{{2}}^{(2)}\mathfrak{F}_{{3}}^{(2)}\mathfrak{F}_{{0}}^{(2)}}$
    &
    $\begin{array}{c}
    \begin{matrix}
        0 & 1 & 2 & 3
    \end{matrix}\\ \sabacus(1.5, lmmr,bnbb,bnbb,bnnn,nnnn,nnnn,vvvv)
    \end{array}$
    &
    $\xrightarrow{\mathfrak{F}_{{1}}^{(3)}}$
    &
    $\begin{array}{c}
    \begin{matrix}
        0 & 1 & 2 & 3
    \end{matrix}\\ \sabacus(1.5, lmmr,nbbb,nbbb,nbnn,nnnn,nnnn,vvvv).
    \end{array}$
\end{tabular}
\end{center}
     
%
%
%
%
%
%
%
%
%
%
%
%
%
%
%
%
%
%
\end{exe}

The next lemma gives a way to establish when a certain sequence of operators acts as non-zero on the partition $\varnothing^{+(k+e+1)}$.

\begin{lem}\label{seqindnot0}
    Let $k\leq -e-1$. Write $-k=k_1e+k_2$ with $k_1\geq1$ and $0\leq k_2 \leq e-1$. Suppose that 
    \begin{equation}\label{action0}
    \mathfrak{F}_{{d-e}}^{(q_{e+1})}\mathfrak{F}_{{d-e+1}}^{(q_{e})} \ldots \mathfrak{F}_{{d-1}}^{(q_{2})}\mathfrak{F}_{{d}}^{(q_{1})} ( \varnothing^{+(k+e+1)} )\ne0,
    \end{equation}
    for some $q_j\geq 0$ for all $1 \leq j\leq e+1$, and $d$ as above. Then, $k_1\ge q_1\ge q_{e+1}-1$. In particular, $q_1=q_{e+1}-1$ if and only if $q_1=\ldots=q_{e+1-k_2}=k_1$ and $q_{e+2-k_2}=\ldots=q_{e+1}=k_1+1$.

    \begin{proof}
        We first note that the partition $\varnothing^{+(k+e+1)}$ has $k_1$ addable $ d$-nodes. Since by hypotesis the operator $\mathfrak{F}_{{d}}^{(q_{1})}$ acts as non-zero on $\varnothing^{+(k+e+1)}$, it follows that $q_1\le k_1$.

        Let $\hat{\la}\vcentcolon=\mathfrak{F}_{{d-e+1}}^{(q_{e})} \ldots \mathfrak{F}_{{d-1}}^{(q_{2})}\mathfrak{F}_{{d}}^{(q_{1})} ( \varnothing^{+(k+e+1)} )$. By hypotesis $\hat{\la}\ne0$ and $\mathfrak{F}_{{d-e}}^{(q_{e+1})} (\hat{\la})\ne0$. We observe that $\hat{\la}$ has at most $q_1+1$ addable $(d-e)$-nodes. Indeed, an addable $(d-e)$-node of $\hat{\la}$ corresponds either to a bead moved by the operator $\mathfrak{F}_{{d}}^{(q_{1})}$ at the beginning of the process, or to the bead at position $(c-1)(e+1)+d$. It follows that $q_{e+1}\le q_1+1$ proving the second inequality in the first statement.

        The second statement follows by looking at the abacus display of $\varnothing^{+(k+e+1)}$ and observing that $q_1=q_{e+1}-1$ if and only if each operator inductively adds the maximum number of addable nodes at each step. By the first paragraph of the proof, the equalities follow.
    \end{proof}
\end{lem}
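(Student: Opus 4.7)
My plan is to translate the statement into bead-counting on the $(e+1)$-abacus of $\varnothing^{+(k+e+1)}$, whose explicit shape is given by \cref{empty+k}. Write $d$ for the label of the inserted runner of $\varnothing^{+(k+e+1)}$ (so $d$ is a residue modulo $e+1$) and $c$ for the number of beads it carries. To prove $q_1\le k_1$, I would first count the addable $d$-nodes of $\varnothing^{+(k+e+1)}$. Since $\varnothing^{+(k+e+1)}$ is an $(e+1)$-core, every runner is flush at the top, so the number of addable $d$-nodes equals the difference in bead counts between runners $d-1$ and $d$. A direct computation using \cref{empty+k}, splitting on whether $-(k+e+1)=(k_1-1)e+(k_2-1)$ (case $k_2\ne0$) or $-(k+e+1)=(k_1-2)e+(e-1)$ (case $k_2=0$), gives this difference as exactly $k_1$, and the non-annihilation of $\mathfrak{F}_d^{(q_1)}$ then forces $q_1\le k_1$.

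For the inequality $q_{e+1}\le q_1+1$, I would set $\hat\lambda=\mathfrak{F}_{d-e+1}^{(q_e)}\cdots\mathfrak{F}_{d-1}^{(q_2)}\mathfrak{F}_d^{(q_1)}(\varnothing^{+(k+e+1)})$ and bound the maximum number of addable $(d-e)$-nodes over all components of $\hat\lambda$. Since $d-e\equiv d+1\pmod{e+1}$, such a node corresponds to a bead on runner $d$ of a component of $\hat\lambda$ with a space immediately to its right on runner $d+1$. Among the operators in the sequence, only $\mathfrak{F}_d^{(q_1)}$ places beads on runner $d$ and only $\mathfrak{F}_{d-e+1}^{(q_e)}$ removes them from runner $d+1$; combining this with the flush-to-top configuration of runners $d$ and $d+1$ in $\varnothing^{+(k+e+1)}$ shows that each component has at most $q_1$ such addable nodes coming from beads newly placed by $\mathfrak{F}_d^{(q_1)}$, plus at most one further addable node coming from the lowest original bead on the inserted runner, located at position $(c-1)(e+1)+d$. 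This yields $q_{e+1}\le q_1+1$ and completes the first assertion.

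The equality case $q_1=q_{e+1}-1$ forces both bounds above to be tight simultaneously. This requires $q_1=k_1$ and that every one of the $k_1+1$ potential addable $(d+1)$-nodes identified in the previous paragraph actually appears in some component of $\hat\lambda$. Propagating this condition back through the sequence, each intermediate operator $\mathfrak{F}_{d-j}^{(q_{j+1})}$ must add the maximum possible number of addable nodes of its residue, for otherwise one of the candidate positions on runner $d+1$ would eventually be blocked. Reading off these maxima from the bead counts on the relevant runners of $\varnothing^{+(k+e+1)}$ then gives $q_{j+1}=k_1$ for the first $e+1-k_2$ operators in the sequence and $q_{j+1}=k_1+1$ for the last $k_2$, matching the asserted pattern. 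The main obstacle is precisely this last propagation: one has to track how each intermediate operator reshapes the bead-space pattern on the downstream runners, and the case split $k_2=0$ versus $k_2\ne0$ must be handled separately because the bead count on the inserted runner changes accordingly.
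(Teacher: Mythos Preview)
Your proposal is correct and follows essentially the same approach as the paper: both arguments count addable $d$-nodes of $\varnothing^{+(k+e+1)}$ via its $(e+1)$-abacus to obtain $q_1\le k_1$, then bound the addable $(d+1)$-nodes of $\hat\lambda$ by tracing which beads can possibly sit on runner $d$ (those moved by $\mathfrak{F}_d^{(q_1)}$ plus the lowest original bead on the inserted runner at position $(c-1)(e+1)+d$), and finally argue that equality $q_{e+1}=q_1+1$ forces each intermediate operator to add the maximal number of nodes. Your write-up is in fact a little more careful than the paper's in two places: you make explicit that $\hat\lambda$ is a linear combination and that the bound must hold for every partition appearing in it, and you spell out the case split on $k_2$ when computing the bead-count difference between runners $d-1$ and $d$.
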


\subsection{Addition of a multirunner for $r\geq2$}\label{def+k}

In \cite{DellA24b}, the first author defines the addition of a \textit{full} multirunner for a multipartition. Here, we generalise this definition to the addition of a generic multirunner. Consider the Ariki-Koike algebra $\aks$ and let $\bm\la$ be a $r$-multipartition of $n$. We construct a new $r$-multipartition as follows. 

Let $d\in I$. Let $\bm k=(k^{(1)},\dots,k^{(r)})\in\Z^r$ such that $k^{(j)}+s_j\equiv d\ (\mod e)$ for each $1\le j\le r$. Then let $\bm a=(a_1,\dots,a_r)\in\Z^r$ be a multicharge for $\aks$ such that $a_j\ge\mathrm{max}\{l(\la^{(j)}),k^{(j)}\}$ for each $1\le j\le r$, and consider the abacus display for $\bla$ with multicharge $\bm a$. For each $j$, write \[a_j+k^{(j)}=c_je+d\] with $c_j\ge0$ and $0\le d\le e-1$, and add a runner immediately to the left of runner $d$ of $\mathrm{Ab}_e^{a_j}(\la^{(j)})$ with $c_j$ beads in the topmost positions, i.e. the positions labelled by $d,e+1+d,\dots,(c_j-1)(e+1)+d$ in the usual labelling for an abacus display with $e+1$ runners. Note that all the new runners have the same label $d$, hence it makes sense to talk about the inserted multirunner. We denote by $\bm\la^{+\bm k}$ the $r$-multipartition whose abacus is obtained with the above procedure. Observe that $\bla^{+\bm k}=((\lambda^{(1)})^{+k^{(1)}},\dots,(\lambda^{(r)})^{+k^{(r)}})$ labels a dual Specht module for a different Ariki-Koike algebra whose $q$-connected parameters we denote by $\bm s^{+}=(s_1^+,\dots,s_r^+)\in(\Z/(e+1)\Z)^r$.

We extend the operator $^{+\bm k}$ linearly to the whole Fock space.

\begin{rmk}
Also for the addition of a generic multirunner, as for the one of a full multirunner (see \cite{DellA24b}), we can notice that $\bla^{+\bm k}$ does not depend on the choice of the multicharge.
\end{rmk}}

We now extend the definition of an empty runner to multipartitions.

\begin{defn}\label{def_empty_mlt}
    We say that the inserted multirunner of $\bla^{+\bm k}$ is \emph{empty} if for each component the inserted runner is empty according to \cref{emptyrunpar}.
\end{defn}

\begin{exe}\label{exrunnadd}
Suppose $\bm\la = ((4,3,2),(2^2),(3))$ and $e=4$. Let $\bm k=(-3,-9,-4)$ and choose $\bm a=(11,9,12)$. 
We get the following abacus display for $\bla$:
\begin{center}
\begin{tabular}{ccc}
        \hspace{-0.1cm}$\begin{matrix}
        0 &  1 & 2 & 3
        \end{matrix}$ & \hspace{-0.1cm}$\begin{matrix}
        0 &  1 & 2 & 3
        \end{matrix}$ & \hspace{-0.1cm}$\begin{matrix}
        0 &  1 & 2 & 3
        \end{matrix}$\\
\sabacus(1.5,lmmr,bbbb,bbbb,nnbn,bnbn,nnnn,vvvv), & 
\sabacus(1.5,lmmr,bbbb,bbbn,nbbn,nnnn,nnnn,vvvv), & 
\sabacus(1.5,lmmr,bbbb,bbbb,bbbn,nnbn,nnnn,vvvv).
\end{tabular}
\end{center}

Then $\bm\la^{+\bm k}=((6,5,3),(5^2,2^3,1^4),(5,1^3)$ and its abacus configuration is the following (with the inserted multirunner in red)
\begin{center}
\begin{tabular}{ccc}
        \hspace{-0.1cm}$\begin{matrix}
        \color{red}{0} &  1 & 2 & 3 & 4
        \end{matrix}$ & \hspace{-0.3cm}$\begin{matrix}
        \color{red}{0} &  1 & 2 & 3 & 4
        \end{matrix}$ & \hspace{-0.1cm}$\begin{matrix}
        \color{red}{0} &  1 & 2 & 3 & 4
        \end{matrix}$\\
\sabacus(1.5,lmmmr,obbbb,obbbb,nnnbn,nbnbn,nnnnn,vvvvv), & 
\sabacus(1.5,lmmmr,nbbbb,nbbbn,nnbbn,nnnnn,nnnnn,vvvvv), & 
\sabacus(1.5,lmmmr,obbbb,obbbb,nbbbn,nnnbn,nnnnn,vvvvv).
\end{tabular}
\end{center}

Notice that the inserted multirunner is empty.

\end{exe}

Now, we show some properties of the operator $^{+\bm k}$. 
In order to do this, we need the following definition: if $\bm{\lambda}$ is a multipartition and each component of $\bm\lambda$ is an $e$-core, then we say that $\bm{\lambda}$ is an $e$-\textit{multicore}. {Given $\bm\la \in \mathcal{P}^r$,
we define the $e$-\textit{multicore of} $\bm\la$ as the multipartition obtained by replacing each component
of $\bm\la$ with its $e$-core.}

\begin{lem}\label{multireg_pres}
Let $\bm k=(k^{(1)},\dots,k^{(r)})\in \Z^r$ and let $\bm\la$ be an $r$-multipartition. If $\bm\la^{+\bm k}$ is the multipartition constructed as above and the inserted multirunner is empty, then
\begin{enumerate}
    \item[$(i)$] $\bm\la^{+\bm k}$ is an $(e+1)$-multiregular $r$-multipartition.
    \item[$(ii)$] $\bm\la$ and $\bm\mu$ have the same $e$-multicore if and only if $\bm\la^{+\bm k}$ and $\bm\mu^{+\bm k}$ have the same $(e+1)$-multicore.
\end{enumerate}
\end{lem}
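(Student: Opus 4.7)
For $(i)$ I would work directly with the $(e+1)$-abacus. The key step is the following abacus characterisation: a partition $\nu$ is $(e+1)$-regular if and only if its $(e+1)$-abacus display contains no $e+1$ beads at consecutive positions $p,p+1,\ldots,p+e$ with $p$ strictly greater than the position of the smallest empty space. (This is the direct translation of the condition that no part is repeated $e+1$ or more times under the dictionary between beta-numbers and parts.) Suppose for contradiction that such a configuration exists in some component $(\la^{(j)})^{+k^{(j)}}$. Because the inserted multirunner is empty, \cref{emptyrunpar} forces the smallest empty position of the whole abacus of $(\la^{(j)})^{+k^{(j)}}$ to lie on the inserted runner, at position $c_j(e+1)+d$, where $c_j$ and $d$ are determined by $a_j+k^{(j)}=c_je+d$; hence $p>c_j(e+1)+d$. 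Pigeonhole on $e+1$ consecutive integers forces one of $p,\ldots,p+e$ to be congruent to $d$ modulo $e+1$, so this position lies on the inserted runner. But the beads of the inserted runner occupy only the positions $x(e+1)+d$ for $0\leq x\leq c_j-1$, each of which is strictly less than $c_j(e+1)+d<p$: contradiction.

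For $(ii)$ I would fix a single multicharge $\bm a$ valid for both $\bm\la$ and $\bm\mu$ (enlarging entries of any initial choice by multiples of $e$ if necessary). With a common charge, two partitions share the same $e$-core if and only if their $e$-abaci carry equal bead counts on each runner, since the $e$-core is obtained by sliding beads up within their own runners. By construction of $^{+\bm k}$, the $(e+1)$-abacus of $(\la^{(j)})^{+k^{(j)}}$ (with charge $a_j+c_j$) is obtained from $\mathrm{Ab}_e^{a_j}(\la^{(j)})$ by inserting an extra runner carrying exactly $c_j$ beads in its topmost positions, while leaving all other runner counts unchanged; the same is true for $\bm\mu^{+\bm k}$ with the same value $c_j$, which depends only on $\bm a$ and $\bm k$. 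Hence the two new $(e+1)$-abaci have matching bead counts on every runner if and only if the original $e$-abaci do, and the equivalence of $(e+1)$-multicores with $e$-multicores follows componentwise.

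The step I expect to carry most of the weight is the location of the smallest empty position of the new abacus in $(i)$: this is exactly what the emptiness hypothesis (via \cref{emptyrunpar,emptyeq}) delivers, and is the mechanism that makes the pigeonhole argument go through. Without emptiness the smallest space could lie on one of the original runners, at a level above $c_j$, and the chain of strict inequalities collapses. Part $(ii)$, by contrast, is essentially bookkeeping at the level of bead counts per runner, and mirrors the analogous property for the full-runner construction of \cite{DellA24b}.
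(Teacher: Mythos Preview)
Your proof is correct and follows essentially the same approach as the paper's. For $(i)$ you make explicit the pigeonhole step that the paper leaves implicit (any $e+1$ consecutive positions must hit the inserted runner, which carries no beads past the first space), and for $(ii)$ your bead-count formulation is equivalent to the paper's observation that $(\text{$e$-core of }\la^{(j)})^{+k^{(j)}}$ is the $(e+1)$-core of $(\la^{(j)})^{+k^{(j)}}$; you are also right that emptiness is not actually needed for $(ii)$.
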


\begin{proof}
Recall that a multipartition is $(e+1)$-multiregular if and only if each of its component is $(e+1)$-regular, that is if and only if each of its component has at most $e$ equal parts (or equivalently, has at most $e$ consecutive bead positions in an abacus display).

Since we are assuming that $\bm\la^{+\bm k}$ is obtained from $\bm\la$ adding an empty multirunner, in every component of $\bm\la^{+\bm k}$ the smallest space occurs in the inserted runner. It follows that for each $1\le j \le r$ the partition ${(\la^{(j)})^{+k^{(j)}}}$ has at most $e$ consecutive beads in its abacus display and so it is a $(e+1)$-regular partition. This proves $(i)$.

For each $1\le j \le r$, recall that the $e$-abacus for the $e$-core of $\la^{(j)}$ is obtained {by sliding the beads up on their runner so that no bead has an empty position above it}. Hence, if $\gamma^{(j)}$ is the $e$-core of $\la^{(j)}$ then $(\gamma^{(j)})^{+k^{(j)}}$
 is the $(e + 1)$-core of $(\la^{(j)})^{+k^{(j)}}$, so $(ii)$ follows.
\end{proof}

\subsection{Induction operators and addition of an empty runner}
In this section, we give some results that will help to prove our main theorem (Theorem \ref{canbasis_runrem}).

We work in the following setting. Let $\bm\la$ be an $r$-multipartition. Let $\bm k,\bm a\in\Z^r$ and $\bla^{+\bm k}$ be as described at the beginning of Section \ref{def+k}. Recall that by construction the inserted multirunner is labeled by $d$ in an abacus display with $e+1$ runners. We define the following function that shows how to relabel the runners after the addition of the new multirunner. Let $g:\{0, \ldots, e-1\}\rightarrow\{0, \ldots, e\}\setminus \{d\}$ such that 
$$
g(i)= \begin{cases}
i & \text{if } i\in\{0, \ldots, d-1\}, \\
i+1 & \text{if } i\in\{d, \ldots, e-1\}.
\end{cases}$$

The following results describe the interaction between induction operators and the addition of a runner. Here, we follow the proof of \cite[Lemma 3.4]{JM02}

\begin{lem}\label{newrunnernotdd+1}
Let $\bm\la$ and $\bm\xi$ be $r$-multipartitions. Let $i \in I\setminus \{d\}$. Then $\bm\la \xrightarrow{m:i} \bm\xi$ if and only if $\bm\la^{+\bm k} \xrightarrow{m:g(i)} \bm\xi^{+\bm k}$, and if this happens, then  $N_i(\bm\la, \bm\xi) = N_{g(i)}(\bm\la^{+\bm k}, \bm\xi^{+\bm k})$.
\end{lem}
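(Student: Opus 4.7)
The plan is to analyse the correspondence between the abacus displays of $\bla$ and $\bla^{+\bm k}$ (and similarly between $\bm\xi$ and $\bm\xi^{+\bm k}$), exploiting the following structural property: for any runner $j\in\{0,\ldots,e\}\setminus\{d\}$ of the new abacus, its bead/space pattern in each component, viewed level by level, coincides with that of runner $g^{-1}(j)$ of the old abacus. This already follows from the construction of the operator $^{+\bm k}$ in \cref{def+k}, since the inserted runner is placed immediately to the left of runner $d$ and contains beads only in the topmost positions.

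First, I would verify the key combinatorial identity
\[g((i-1)\bmod e)\equiv g(i)-1\pmod{e+1}\qquad\text{for }i\in I\setminus\{d\},\]
by checking the cases $i<d$, $i=d+1$, $i>d+1$, and $i=0$ (together with the subcases $d=0$ and $d=e-1$). In each case one also confirms that neither $g(i)$ nor $g((i-1)\bmod e)$ equals $d$ modulo $e+1$, so the inserted runner plays no role. Combined with the structural property above, this shows that adding $m$ nodes of residue $i$ to $\bla$ — which in abacus language means moving $m$ beads from multirunner $(i-1)\bmod e$ to multirunner $i$ — corresponds under the bead correspondence to moving $m$ beads from multirunner $g(i)-1$ to multirunner $g(i)$ of the new abacus, i.e., adding $m$ nodes of residue $g(i)$ to $\bla^{+\bm k}$. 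This establishes the bijection $\bla\xrightarrow{m:i}\bm\xi\iff\bla^{+\bm k}\xrightarrow{m:g(i)}\bm\xi^{+\bm k}$, under which $\bm\xi$ and $\bm\xi^{+\bm k}$ are identified.

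For the equality $N_i(\bla,\bm\xi)=N_{g(i)}(\bla^{+\bm k},\bm\xi^{+\bm k})$, the same structural property gives a bijection between addable (resp. removable) $i$-nodes of $\bm\xi$ (resp. $\bla$) and addable (resp. removable) $g(i)$-nodes of $\bm\xi^{+\bm k}$ (resp. $\bla^{+\bm k}$). It remains to check that this bijection respects the partial order $\succ$: the component index of a node is unchanged, and within a component, the position of a node in the order $\succ$ is determined by its row index $b$, which is encoded by the level of the associated bead in the abacus — a quantity preserved by the bead correspondence. Summing over the bijection of new $i$-nodes $\mathfrak{n}\leftrightarrow\mathfrak{n}'$ yields $\addab{i}{\bm\xi}=\addab{g(i)}{\bm\xi^{+\bm k}}$ and $\remab{i}{\bla}=\remab{g(i)}{\bla^{+\bm k}}$ for each pair, from which the desired equality follows.

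The main obstacle is translating the partial order $\succ$ into abacus language in the multicomponent setting and verifying its invariance under insertion of the empty multirunner — the subtlety being that the row index $b$ relates inversely to the position of the associated bead, so some care is needed to set up the correspondence correctly. The remainder of the argument is a routine case analysis.
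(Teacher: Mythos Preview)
Your proposal is correct and takes essentially the same approach as the paper: both argue that since $i\neq d$, the inserted multirunner is not placed between multirunners $i-1$ and $i$, so the bead configurations on those two multirunners---which determine both the relation $\xrightarrow{m:i}$ and the coefficient $N_i$---are unchanged under $^{+\bm k}$ up to the relabelling $g$. The paper's proof is considerably terser, simply asserting this without your explicit case analysis for $g$ or your verification that the bijection on addable/removable nodes respects $\succ$, but the underlying idea is identical.
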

\begin{proof}
We have $\bm\la \xrightarrow{m:i} \bm\xi$ if and only if the abacus display for $\bm\xi$ is obtained from that of $\bla$ by moving $m$ beads from multirunner $i-1$ to multirunner $i$. In this case, $N_i(\bm\la, \bm\xi)$ is determined by the configurations of these multirunners in the abacus displays of $\bm\la$ and $\bm\xi$. The fact that $i\neq d$ means that in constructing the abacus displays for $\bm\la^{+\bm k}$ and $\bm\xi^{+\bm k}$ the inserted multirunner is not added in between multirunners $i-1$ and $i$. It follows that $\bm\la^{+\bm k} \xrightarrow{m:g(i)} \bm\xi^{+\bm k}$ and the coefficient $N_{g(i)}(\bm\la^{+\bm k}, \bm\xi^{+\bm k})$ is determined in exactly the same way as $N_i(\bm\la, \bm\xi)$.
\end{proof}

\begin{cor}\label{corineqd}
Let $m \geq 1$ and $i \in I \setminus \{d\}$. Then $\left(f_{i}^{(m)}(\bm\la)\right)^{+\bm k} =\mathfrak{F}_{{g(i)}}^{(m)}(\bm\la^{+\bm k})$.
\end{cor}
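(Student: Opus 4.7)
The plan is to apply \cref{newrunnernotdd+1} termwise to the defining formulas for $f_i^{(m)}$ and $\mathfrak{F}_{g(i)}^{(m)}$. Using the rule $f_i^{(m)}\bla = \sum_{\bla\xrightarrow{m:i}\bm\xi}v^{N_i(\bla,\bm\xi)}\bm\xi$ and the analogous formula for $\mathfrak{F}_{g(i)}^{(m)}$ acting on the Fock space over $U_v(\widehat{\mathfrak{sl}}_{e+1})$, the identity I must prove becomes
\[
\sum_{\bla\xrightarrow{m:i}\bm\xi} v^{N_i(\bla,\bm\xi)}\, \bm\xi^{+\bm k} \;=\; \sum_{\bla^{+\bm k}\xrightarrow{m:g(i)}\bm\eta} v^{N_{g(i)}(\bla^{+\bm k},\bm\eta)}\, \bm\eta.
\]
First I would set up the candidate bijection $\bm\xi \longmapsto \bm\xi^{+\bm k}$ between the two indexing sets. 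The ``only if'' direction and the equality of exponents $N_i(\bla,\bm\xi)=N_{g(i)}(\bla^{+\bm k},\bm\xi^{+\bm k})$ come for free from \cref{newrunnernotdd+1}, so the actual content lies in checking that every $\bm\eta$ appearing on the right is of the form $\bm\xi^{+\bm k}$ for some $\bm\xi$ with $\bla\xrightarrow{m:i}\bm\xi$.

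For this surjectivity step, I would unwind the arrow $\bla^{+\bm k}\xrightarrow{m:g(i)}\bm\eta$ on the abacus: the display of $\bm\eta$ is obtained from that of $\bla^{+\bm k}$ by transferring $m$ beads from multirunner $g(i)-1\pmod{e+1}$ to multirunner $g(i)$. By construction of $g$ we have $g(i)\ne d$. A short case check on $i\in I\setminus\{d\}$ (splitting on $i<d$ versus $i>d$, and treating the wrap $i=0$ where $g(i)-1\equiv e$ separately) then shows $g(i)-1\pmod{e+1}\ne d$ as well. Consequently the bead transfer never touches the inserted multirunner (labelled $d$), so the inserted multirunner of $\bm\eta$ is identical to that of $\bla^{+\bm k}$; deleting it from $\bm\eta$ recovers a multipartition $\bm\xi$ with $\bla\xrightarrow{m:i}\bm\xi$ and $\bm\eta=\bm\xi^{+\bm k}$.

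Once the bijection is in place, the coefficients match by \cref{newrunnernotdd+1} and the two sums are equal term by term, proving the corollary. I do not expect any real obstacle here: \cref{newrunnernotdd+1} is designed precisely to make this a routine verification, and the only new ingredient is the small case analysis confirming that neither endpoint of the bead transfer lies on the inserted runner.
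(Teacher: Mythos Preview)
Your proof is correct and follows the same approach as the paper, which simply cites \cref{newrunnernotdd+1} together with the action formula for $f_i^{(m)}$. Your explicit surjectivity check (that every $\bm\eta$ with $\bla^{+\bm k}\xrightarrow{m:g(i)}\bm\eta$ is of the form $\bm\xi^{+\bm k}$) is a detail the paper leaves implicit in the proof of \cref{newrunnernotdd+1}, where the key observation that the inserted multirunner is not placed between multirunners $i-1$ and $i$ already ensures the bead transfer avoids runner $d$.
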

\begin{proof}
The thesis follows from Lemma \ref{newrunnernotdd+1} and the description of the action of $f_{i}^{(m)}$ given in Section \ref{algsec}.
\end{proof}

For the next lemma, we introduce the following notation. If $\bm\la$ and $\bm\xi$ are $r$-multipartitions, then we write $\bm\la \xrightarrow[m:i+1]{m:i} \bm\xi$ to indicate that $\bm\xi$ is obtained from $\bm\la$ by adding $m$ addable $i$-nodes and then $m$ addable $(i+1)$-nodes. This notation is just a shorter version of the following one:
$$\bm\la \xrightarrow{m:i}\bm\nu\xrightarrow{m:i+1} \bm\xi$$
where $\bm\nu$ is the $r$-multipartition obtained from $\la$ by adding $m$ addable $i$-nodes.
 
\begin{lem}\label{newrunnerdd+1}
Let $\bm\la$ and $\bm\xi$ be $r$-multipartitions and consider the multipartition $\bla^{+\bm k}$ constructed as above. If the inserted multirunner $d$ is empty, then $\bm\la \xrightarrow{m:d} \bm\xi$ if and only if $\bm\la^{+\bm k} \xrightarrow[m:d+1]{m:d} \bm\xi^{+\bm k}$.
\end{lem}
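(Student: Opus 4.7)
The plan is to work directly at the level of abacus displays. The key observation is that in the $(e+1)$-runner abacus of $\bla^{+\bm k}$, the inserted (empty) multirunner labelled $d$ sits between new multirunner $d-1$ (the image under $g$ of old multirunner $d-1\pmod{e}$) and new multirunner $d+1$ (the image under $g$ of old multirunner $d$). Since the inserted multirunner is empty, it serves as a transit lane: moving $m$ beads from new multirunner $d-1$ onto the inserted multirunner is a residue-$d$ move in the $(e+1)$-abacus, and moving those beads off the inserted multirunner onto new multirunner $d+1$ is a residue-$(d+1)$ move. The net effect on the non-inserted multirunners matches exactly the single move of $m$ beads from old multirunner $d-1$ to old multirunner $d$ encoded by $\bla \xrightarrow{m:d} \bm\xi$.

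For the forward direction, I would assume $\bla \xrightarrow{m:d} \bm\xi$ and verify the two-step simulation. The crucial point is that since the inserted multirunner is empty, \cref{emptyeq} applied component-wise shows that every position at levels $0,\dots,c_j-1$ in the old abacus of $\la^{(j)}$ is a bead. Consequently, each of the $m$ beads that move in the old abacus sits at a level $\geq c_j$, where the inserted multirunner has an empty slot available (target of step~1) and where the corresponding position on new multirunner $d+1$ is empty (target of step~2, coinciding with the originally empty target on old multirunner $d$). So the two-step move $\bla^{+\bm k}\xrightarrow[m:d+1]{m:d}\bm\xi^{+\bm k}$ is valid.

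For the backward direction, I would suppose that $\bla^{+\bm k} \xrightarrow{m:d} \bm\eta \xrightarrow{m:d+1} \bm\xi^{+\bm k}$ for some intermediate multipartition $\bm\eta$, and reconstruct the old-abacus move. After step~1, the inserted multirunner of $\bm\eta$ has its original $c_j$ top beads in each component $j$ together with some newly placed beads, which necessarily lie at levels $\geq c_j$. In step~2, the original top $c_j$ beads on the inserted multirunner cannot move to new multirunner $d+1$, because the positions immediately to their right correspond to positions at levels $<c_j$ in the old abacus, which are all beads by the same emptiness argument. Hence the $m$ beads moved in step~2 are forced to be the $m$ beads placed in step~1. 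The net effect on $\bla^{+\bm k}$ is therefore the removal of $m$ beads from new multirunner $d-1$ together with the addition of $m$ beads on new multirunner $d+1$ at the same levels, with the inserted multirunner restored to its canonical configuration; translating back gives $\bla \xrightarrow{m:d} \bm\xi$.

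The main subtlety is the emptiness rigidity used in the backward direction: without knowing that the old abacus is completely filled up to level $c_j$, one could not conclude that the beads moved in step~2 must be those placed in step~1, and the two-step move might not bijectively correspond to the single old-abacus move. The wraparound case $d=0$, in which new multirunner $d-1$ is really new multirunner $e$, causes no essential change once indices are read modulo $e+1$.
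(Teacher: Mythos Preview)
Your proposal is correct and follows essentially the same approach as the paper. Both arguments work at the level of abacus displays and exploit the emptiness of the inserted multirunner to control which beads can participate in the two-step move; the paper frames the backward direction via a dichotomy of addable $d$-nodes of $\bla^{+\bm k}$ (configurations $(\abacus(b),\abacus(n),\abacus(b))$ versus $(\abacus(b),\abacus(n),\abacus(n))$ on runners $d-1,d,d+1$) and rules out the first type by a counting argument, while you reach the same conclusion by noting that the $c_j$ original beads on the inserted runner are blocked in step~2, forcing the $m$ step-2 movers to coincide with the $m$ step-1 placements.
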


\begin{proof}
Suppose that $\bm\la \xrightarrow{m:d} \bm\xi$. This means that the abacus display for $\bm\xi$ is obtained by moving $m$ beads from multirunner $d-1$ to multirunner $d$ of $\mathrm{Ab}_e^{\bm a}(\bm\la)$. Notice that this moving of beads can occur simultaneously in different components of $\bm\la$, say that $\la^{(j_1)}, \ldots, \la^{(j_s)}$ are the components of $\bm\la$ involved to get $\bm\xi$. Let $m_t$ be the number of beads moved in $\la^{(j_t)}$, for $t=1, \ldots, s$.  So, for each $t=1, \ldots, s$, there are at least $m_t$ levels, say $\ell_1^{(t)}, \ldots, \ell_{m_t}^{(t)}$, in the abacus of $\la^{(j_t)}$ that present a configuration of the type
$$\begin{array}{cc}
 d-1 & d \\
 \abacus(b) &\abacus(n)
\end{array}.$$
When we apply the operator $^{+\bm k}$ to $\bm\la$, by assumption we add a empty runner in between runners $d-1$ and $d$ in each component of $\bm\la$. This implies that at these levels of the abacus of $\bla^{+\bm k}$ we have an abacus configuration of the type 
$$\begin{array}{ccc}
 d-1 & d & d+1 \\
 \abacus(b) &\abacus(n)& \abacus(n)
\end{array}.$$

Now consider $\bm\nu$ the unique $r$-multipartition obtained from the abacus of $\bla^{+\bm k}$ moving the $m$ beads in the components $j_1, \ldots,j_s$ and at levels $\ell_1^{(t)}, \ldots, \ell_{m_t}^{(t)}$ for $t=1, \ldots, s$. Then $\bm\nu \xrightarrow{m:d+1} \bm\xi^{+\bm k}$. Indeed, since the inserted multirunner is empty, the only beads that can be moved from multirunner $d$ to multirunner $d+1$ in the abacus of $\bm\nu$ are those moved at the first step.

Conversely, suppose that $\bm\la^{+\bm k} \xrightarrow[m:d+1]{m:d} \bm\xi^{+\bm k}$. Let $\bm\nu$ such that $\bm\la^{+\bm k} \xrightarrow{m:d}\bm\nu\xrightarrow{m:d+1} \bm\xi^{+\bm k}$. We want to show that $\bm\la \xrightarrow{m:d} \bm\xi$. Since the inserted multirunner is empty, the abacus configuration of an addable $d$-node of $\bm\la^{+\bm k}$ is one of the following:
\begin{enumerate}
\item[(1)] $$\begin{array}{ccc}
 d-1 & d & d+1 \\
 \abacus(b) &\abacus(n)& \abacus(b)
\end{array},$$
\item[(2)] $$\begin{array}{ccc}
 d-1 & d & d+1 \\
 \abacus(b) &\abacus(n)& \abacus(n)
\end{array}.$$
\end{enumerate}

If one of the beads moved from multirunner $d-1$ of the abacus of $\bla^{+\bm k}$ is of type (1), then in the abacus of $\bm\nu$ there are less than $m$ beads to move from multirunner $d$ to multirunner $d+1$. Hence (the abacus of) $\bm\xi^{+\bm k}$ cannot be obtained. It follows that all the beads we move from multirunner $d-1$ of the abacus of $\bm\la^{+\bm k}$ to get $\bm\nu$ come from an abacus configuration of type (2). Moreover, these $m$ beads are the only beads that can be moved from multirunner $d$ to multirunner $d+1$ afterwards. We conclude that if $j_1, \ldots, j_s$ are the components of $\bm\la^{+\bm k}$ involved to get $\bm\xi^{+\bm k}$ and the beads moved in the component $j_t$ are at levels $\ell_1^{(t)}, \ldots, \ell_{m_t}^{(t)}$, for $t=1, \ldots, s$, then $\bm\xi$ must be obtained from $\bm\la$ moving exactly the same beads from multirunner $d-1$ to multirunner $d$.
\end{proof}

\begin{lem}\label{Ndd+1}
With the same assumption of Lemma \ref{newrunnerdd+1}, we have $$N_d(\bm\la, \bm\xi) = N_{d}(\bm\la^{+\bm k}, \bm\nu) + N_{d+1}(\bm\nu,\bm\xi^{+\bm k})$$ where $\bm\nu$ is the unique $r$-multipartition such that $\bm\la^{+\bm k} \xrightarrow{m:d}\bm\nu\xrightarrow{m:d+1} \bm\xi^{+\bm k}$.
\end{lem}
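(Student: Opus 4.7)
The plan is to reduce the identity to a term-by-term comparison indexed by the moved beads $B_1,\dots,B_m$. For each $t$, write $\mathfrak{n}_t$ (resp.\ $\mathfrak{n}_t^1$, $\mathfrak{n}_t^2$) for the node added by the motion of $B_t$ in the step $\bla\to\bm\xi$ (resp.\ $\bla^{+\bm k}\to\bm\nu$, $\bm\nu\to\bm\xi^{+\bm k}$). Since the inserted multirunner is empty, for each component $j$ every position of the $(e+1)$-abacus of $(\la^{(j)})^{+k^{(j)}}$ lying before $c_j(e+1)+d$ is a bead. This forces: (i) $\bla^{+\bm k}$ has no removable $d$-nodes and no addable $(d+1)$-nodes; (ii) the addable $d$-nodes of $\bla^{+\bm k}$ in component $j$ are in bijection with positions at level $\ge c_j$ of the $e$-abacus of $\la^{(j)}$ having a bead on runner $d-1$; and (iii) the removable $(d+1)$-nodes of $\bla^{+\bm k}$ in component $j$ are in bijection with positions at level $\ge c_j$ of the $e$-abacus of $\la^{(j)}$ having a bead on runner $d$. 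The positions in (ii) split into ``BS''-type (runner $d$ empty, matching an addable $d$-node of $\la^{(j)}$) and ``BB''-type (runner $d$ also a bead); the positions in (iii) split into ``SB''-type (matching a removable $d$-node of $\la^{(j)}$) and the same ``BB''-type positions.

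Because the inserted runner contributes beads only at levels $<c_j$, the relative ranking of any bead at level $\ge c_j$ is the same in the $e$-abacus of $\la^{(j)}$ and the $(e+1)$-abacus of $(\la^{(j)})^{+k^{(j)}}$. In particular, $\mathfrak{n}_t$, $\mathfrak{n}_t^1$ and $\mathfrak{n}_t^2$ all lie in the same row $b_t$ of component $j_t$; and the row of an addable $d$-node (resp.\ removable $(d+1)$-node) of $\bla^{+\bm k}$ arising from a position at level $\ell\ge c_j$ equals the row of the bead on runner $d-1$ (resp.\ runner $d$) of the $e$-abacus of $\la^{(j)}$ at that level. This row-preservation is the technical heart of the proof.

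Using \cref{Ncomps} and the observation that moving a bead turns its addable $d$-node into a removable $d$-node at the same position without otherwise affecting the $d$- or $(d+1)$-signatures, one shows
\begin{align*}
N_d(\bla,\bm\xi) &= \sum_{t=1}^m (A_t - R_t) - \binom{m}{2},\\
N_d(\bla^{+\bm k},\bm\nu) &= \sum_{t=1}^m A_t^{+} - \binom{m}{2},\\
N_{d+1}(\bm\nu,\bm\xi^{+\bm k}) &= -\sum_{t=1}^m R_t^{+},
\end{align*}
where $A_t$ and $R_t$ denote the numbers of addable resp.\ removable $d$-nodes of $\bla$ above $\mathfrak{n}_t$, $A_t^{+}$ is the number of addable $d$-nodes of $\bla^{+\bm k}$ above $\mathfrak{n}_t^1$, and $R_t^{+}$ is the number of removable $(d+1)$-nodes of $\bla^{+\bm k}$ above $\mathfrak{n}_t^2$. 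The $\binom{m}{2}$ corrections come from the strict total order $\succ$ on the $m$ added nodes: each higher bead removes its addable node from view for the later beads. The second identity uses (i); the third uses (i) together with the facts that $\bm\nu$ and $\bla^{+\bm k}$ have identical removable $(d+1)$-nodes (the first move creates or destroys none) and that $\bm\xi^{+\bm k}$ has no addable $(d+1)$-nodes (all $m$ of them in $\bm\nu$ are consumed by the second move).

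It remains to verify the term-by-term identity $A_t^{+} - R_t^{+} = A_t - R_t$ for each $t$. By the catalogue and the row-preservation, a BB position at level $\ell\ge c_j$ of component $j$ contributes one unit to $A_t^{+}$ if and only if $j<j_t$, or $j=j_t$ and $\ell>\ell_t$; exactly the same condition makes it contribute one unit to $R_t^{+}$ (there is no boundary issue at $\ell=\ell_t$ in component $j_t$ since that position is BS, not BB, in $\la^{(j_t)}$). Hence the BB contributions cancel in pairs, leaving the BS contributions that give $A_t$ and the SB contributions that give $R_t$. Summing over $t$ and combining the two $\binom{m}{2}$-corrections yields the desired identity. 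The main subtlety is the row-preservation argument in the second paragraph, which requires careful bookkeeping of beads below and above level $c_j$ to ensure that the comparison of rows between the two abaci is exact.
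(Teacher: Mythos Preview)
Your proof is correct and follows essentially the same strategy as the paper's: a node-by-node comparison indexed by the $m$ moved beads, using the three configuration types on runners $d-1,d$ of the $e$-abacus (your BS/BB/SB are exactly the paper's $r_2,r_1,r_3$). The only difference is organisational. The paper computes, for each added node $\mathfrak{n}$ and its companions $\mathfrak{n}',\mathfrak{n}''$, the three quantities $\addab{d}{\bm\xi}-\remab{d}{\bla}$, $\addabp{d}{\bm\nu}-\remabp{d}{\bla^{+\bm k}}$, $\addabs{d+1}{\bm\xi^{+\bm k}}-\remabs{d+1}{\bm\nu}$ directly as $(r_2-b)-r_3$, $(r_2-b+r_1)-0$, and $0-(r_1+r_3)$, and observes that the second plus the third equals the first; you instead pull the interference among the $m$ moved beads out as a common $\binom{m}{2}$ term, reduce everything to counts $A_t,R_t,A_t^+,R_t^+$ taken in $\bla$ and $\bla^{+\bm k}$ alone, and then show $A_t^+-R_t^+=A_t-R_t$ by cancelling the BB contributions in pairs. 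Both arguments rest on the same row-preservation fact (beads at level $\ge c_j$ sit in the same row before and after the runner is inserted) and the same boundary check (the level of $\mathfrak{n}_t$ is BS, not BB, so the one-row offset between the addable $d$-node and removable $(d+1)$-node at a BB level never lands on row $r_t$). Neither approach requires an idea the other lacks.
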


\begin{proof}
We start noting that $\bla^{+\bm k}$ may have more addable $d$-nodes than $\bla$: these correspond to pairs of adjacent bead positions on multirunners $d-1$ and $d$ of the $e$-abacus of $\bm\la$ (configurations of type (1) in the proof of \cref{newrunnerdd+1}). 
As shown in the last paragraph of the proof of \cref{newrunnerdd+1}, none of these $d$-nodes is added to $\bla^{+\bm k}$ to get $\bm\nu$. It follows that each node $\mathfrak{n}\in\bm\xi\setminus\bla$ is in one to one correspondence with a node $\mathfrak{n}'\in\bm\nu\setminus\bla^{+\bm k}$ and a node $\mathfrak{n}''\in\bm\xi^{+\bm k}\setminus\bm\nu$.

Recall that by definition
\begin{align*}
N_d(\bm\la, \bm\xi) = \sum_{\mathfrak{n} \in \bm\xi \setminus \bm\la} \addab{d}{\bm\xi}-\remab{d}{\bla},
\end{align*}
\begin{align*}
N_{d}(\bm\la^{+\bm k}, \bm\nu) = \sum_{\mathfrak{n}' \in \bm\nu \setminus \bm\la^{+\bm k}} \addabp{d}{\bm\nu}-\remabp{d}{\bla^{+\bm k}},
\end{align*}
\begin{align*}
N_{d+1}(\bm\nu, \bm\xi^{+\bm k}) = \sum_{\mathfrak{n}'' \in \bm\xi^{+\bm k} \setminus \bm\nu} \addabs{d+1}{\bm\xi^{+\bm k}}-\remabs{d+1}{\bm\nu}.
\end{align*}

Now, consider $\mathfrak{n}\in \bm\xi\setminus \bm\la$ and let $J_{\mathfrak{n}}$ be the component of $\bm\xi$ of the node $\mathfrak{n}$. Set 
\begin{itemize}
\item $r_1$ to be the number of levels of the abacus of $\la^{(J_{\mathfrak{n}})}$ corresponding to nodes above $\mathfrak{n}$ with a configuration of type:
$$\begin{array}{cc}
 d-1 & d  \\
 \abacus(b) &\abacus(b)
\end{array},$$
\item $r_2$ to be the number of levels of the abacus of $\la^{(J_{\mathfrak{n}})}$ corresponding to nodes above $\mathfrak{n}$ with a configuration of type:
$$\begin{array}{cc}
 d-1 & d  \\
 \abacus(b) &\abacus(n)
\end{array},$$
\item $r_3$ to be the number of levels of the abacus of $\la^{(J_{\mathfrak{n}})}$ corresponding to nodes above $\mathfrak{n}$ with a configuration of type:
$$\begin{array}{cc}
 d-1 & d  \\
 \abacus(n) &\abacus(b)
\end{array},$$
\item $b$ to be the number of removable $d$-nodes in $\bm\xi\setminus\bm\la$ above $\mathfrak{n}$ in the component $J_{\mathfrak{n}}$.
\end{itemize}
In order to make easier to visualise the abacus display, we group together the levels of the abacus of $\la^{(J_{\mathfrak{n}})}$ above $\mathfrak{n}$ with the same configuration type, as shown below. This is illustrative only, the levels could easily appear in different order in general. Then the abacus display of $\la^{(J_{\mathfrak{n}})}$ and $\xi^{(J_{\mathfrak{n}})}$ look like the following (the bead that corresponds to the addition of $\mathfrak{n}$ is in \textcolor{red}{red}):
\begin{center}
\begin{tabular}{cc}
$\hspace{-2cm}\la^{(J_{\mathfrak{n}})}$ & $\hspace{-2cm}\xi^{(J_{\mathfrak{n}})}$\\
$\begin{array}{ccc}
 d-1 & d & \\
 \sabacus(1.5,b,b,v,b,b,v,b,b,v,b,n,v,n) &\sabacus(1.5,n,b,v,b,n,v,n,n,v,n,b,v,b) &
\begin{BMAT}(@)[0.5pt,0pt,4cm]{c}{ccccc}
     \vphantom{\rule{1mm}{14pt}}  \\
     \left.\vphantom{\rule{1mm}{21pt}} \right\rbrace \\ 
     \left.\vphantom{\rule{1mm}{21pt}} \right\rbrace \\ 
     \left.\vphantom{\rule{1mm}{21pt}} \right\rbrace \\ 
     \left.\vphantom{\rule{1mm}{21pt}} \right\rbrace 
\end{BMAT}
  \begin{BMAT}(@){l}{cc}
     \text{level of }\mathfrak{n} \\
     \begin{BMAT}(r)[5pt,2pt,6cm]{l}{cccc}
     r_1 \\ 
     r_2-b \\
     b\\
     r_3
     \end{BMAT}
  \end{BMAT} 
\end{array}.$
&

$\begin{array}{ccc}
 d-1 & d & \\
 \sabacus(1.5,n,b,v,b,b,v,b,n,v,n,n,v,n) &\sabacus(1.5,o,b,v,b,n,v,n,b,v,b,b,v,b) &
\begin{BMAT}(@)[0.5pt,0pt,6cm]{c}{ccccc}
     \vphantom{\rule{1mm}{14pt}}  \\
     \left.\vphantom{\rule{1mm}{21pt}} \right\rbrace \\ 
     \left.\vphantom{\rule{1mm}{21pt}} \right\rbrace \\ 
     \left.\vphantom{\rule{1mm}{21pt}} \right\rbrace \\ 
     \left.\vphantom{\rule{1mm}{21pt}} \right\rbrace 
\end{BMAT}
   \begin{BMAT}(@){l}{cc}
     \text{level of }\mathfrak{n} \\
     \begin{BMAT}(r)[5pt,2pt,6cm]{l}{cccc}
     r_1 \\ 
     r_2-b \\
     b\\
     r_3
     \end{BMAT}
  \end{BMAT} 
\end{array}.$
\end{tabular}
\end{center}
Thus, we have \[\addab{d}{\xi^{(J_{\mathfrak{n}})}}-\remab{d}{\la^{(J_{\mathfrak{n}})}}=(r_2-b)-r_3.\]
Now, consider the abacus displays of the component $J_{\mathfrak{n}}$ for the $r$-multipartitions $\bm\la^{+\bm k} \xrightarrow{m:d}\bm\nu\xrightarrow{m:d+1} \bm\xi^{+\bm k}$. Let $\mathfrak{n}'$ and $\mathfrak{n}''$ denote the nodes corresponding to $\mathfrak{n}$ (with the corresponding beads shown in \textcolor{red}{red}). Then, we have the following configurations:
\begin{center}
\scalebox{0.9}{
\begin{tabular}{ccc}
$\hspace{-2cm}(\la^{(J_{\mathfrak{n}})})^{+k^{(J_{\mathfrak{n}})}}$ & $\hspace{-2cm}\nu^{(J_{\mathfrak{n}})}$ & $\hspace{-2cm}(\xi^{(J_{\mathfrak{n}})})^{+k^{(J_{\mathfrak{n}})}}$\\
$\begin{array}{cccc}
 d-1 & d & d+1 &\\
 \sabacus(1.5,b,b,v,b,b,v,b,b,v,b,n,v,n) & \sabacus(1.5,n,n,v,n,n,v,n,n,v,n,n,v,n) &\sabacus(1.5,n,b,v,b,n,v,n,n,v,n,b,v,b) & 
\begin{BMAT}(@)[0.5pt,0pt,6cm]{c}{ccccc}
     \vphantom{\rule{1mm}{14pt}}  \\
     \left.\vphantom{\rule{1mm}{21pt}} \right\rbrace \\ 
     \left.\vphantom{\rule{1mm}{21pt}} \right\rbrace \\ 
     \left.\vphantom{\rule{1mm}{21pt}} \right\rbrace \\ 
     \left.\vphantom{\rule{1mm}{21pt}} \right\rbrace 
\end{BMAT}
  \begin{BMAT}(@){l}{cc}
     \text{level of }\mathfrak{n}' \\
     \begin{BMAT}(r)[5pt,2pt,6cm]{l}{cccc}
     r_1 \\ 
     r_2-b \\
     b\\
     r_3
     \end{BMAT}
  \end{BMAT} 
\end{array},$
   &
   $\begin{array}{cccc}
 d-1 & d & d+1 &\\
 \sabacus(1.5,n,b,v,b,b,v,b,n,v,n,n,v,n) & \sabacus(1.5,o,n,v,n,n,v,n,b,v,b,n,v,n) &\sabacus(1.5,n,b,v,b,n,v,n,n,v,n,b,v,b) & 
\begin{BMAT}(@)[0.5pt,0pt,6cm]{c}{ccccc}
     \vphantom{\rule{1mm}{14pt}}  \\
     \left.\vphantom{\rule{1mm}{21pt}} \right\rbrace \\ 
     \left.\vphantom{\rule{1mm}{21pt}} \right\rbrace \\ 
     \left.\vphantom{\rule{1mm}{21pt}} \right\rbrace \\ 
     \left.\vphantom{\rule{1mm}{21pt}} \right\rbrace 
\end{BMAT}
   \begin{BMAT}(@){l}{cc}
     \text{level of }\mathfrak{n}' \\
     \begin{BMAT}(r)[5pt,2pt,6cm]{l}{cccc}
     r_1 \\ 
     r_2-b \\
     b\\
     r_3
     \end{BMAT}
  \end{BMAT} 
\end{array},$
   &
      $\begin{array}{cccc}
 d-1 & d & d+1 &\\
 \sabacus(1.5,n,b,v,b,b,v,b,n,v,n,n,v,n) & \sabacus(1.5,n,n,v,n,n,v,n,n,v,n,n,v,n) &\sabacus(1.5,o,b,v,b,n,v,n,b,v,b,b,v,b) & 
\begin{BMAT}(@)[0.5pt,0pt,6cm]{c}{ccccc}
     \vphantom{\rule{1mm}{14pt}}  \\
     \left.\vphantom{\rule{1mm}{21pt}} \right\rbrace \\ 
     \left.\vphantom{\rule{1mm}{21pt}} \right\rbrace \\ 
     \left.\vphantom{\rule{1mm}{21pt}} \right\rbrace \\ 
     \left.\vphantom{\rule{1mm}{21pt}} \right\rbrace 
\end{BMAT}
   \begin{BMAT}(@){l}{cc}
     \text{level of }\mathfrak{n}'' \\
     \begin{BMAT}(r)[5pt,2pt,6cm]{l}{cccc}
     r_1 \\ 
     r_2-b \\
     b\\
     r_3
     \end{BMAT}
  \end{BMAT} 
\end{array}.$  
\end{tabular}}
\end{center}
Thus, we have
\begin{gather*}
\addabp{d}{\nu^{(J_\mathfrak{n})}}-\remabp{d}{(\la^{(J_\mathfrak{n})})^{+k^{(J_\mathfrak{n}})}}=(r_2-b+r_1)-0,\\
\addabs{d+1}{(\xi^{(J_\mathfrak{n})})^{+k^{(J_\mathfrak{n})}}}-\remabs{d+1}{\nu^{(J_\mathfrak{n})}}=0-(r_1+r_3).
\end{gather*}
Taking the sum of the two  above equations, we obtain exactly $\addab{d}{\xi^{(J_{\mathfrak{n}})}}-\remab{d}{\la^{(J_{\mathfrak{n}})}}$.

By a similar argument, we can show that for every $j<J_\mathfrak{n}$,
\[\add{d}{\xi^{(j)}}-\rem{d}{\la^{(j)}}=\big[\add{d}{\nu^{(j)}}-\rem{d}{(\la^{(j)})^{+k^{(j)}}}\big]+\big[\add{d+1}{(\xi^{(j)})^{+k^{(j)}}}-\rem{d+1}{\nu^{(j)}}\big].\]
By Proposition \ref{Ncomps} we conclude.

\end{proof}

\begin{cor}\label{corieqd}
With the same assumption of Lemma \ref{newrunnerdd+1}, then \[\left(f_{d}^{(m)}(\bm\la)\right)^{+\bm k} =\mathfrak{F}_{{d+1}}^{(m)}\mathfrak{F}_{ d}^{(m)}(\bm\la^{+\bm k}).\]
\end{cor}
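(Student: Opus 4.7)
The plan is to expand both sides of the claimed identity using the formula
$f_i^{(m)}\bm\la = \sum_{\bm\la\xrightarrow{m:i}\bm\xi}v^{N_i(\bm\la,\bm\xi)}\bm\xi$
from Section \ref{sec:LLTalg}, and then match the resulting sums term-by-term using the previous two lemmas. On the one hand,
\[
\left(f_d^{(m)}(\bm\la)\right)^{+\bm k} = \sum_{\bm\la\xrightarrow{m:d}\bm\xi} v^{N_d(\bm\la,\bm\xi)}\,\bm\xi^{+\bm k};
\]
on the other hand, after applying $\mathfrak{F}_d^{(m)}$ and then $\mathfrak{F}_{d+1}^{(m)}$ to $\bm\la^{+\bm k}$,
\[
\mathfrak{F}_{d+1}^{(m)}\mathfrak{F}_d^{(m)}(\bm\la^{+\bm k}) = \sum_{\bm\la^{+\bm k}\xrightarrow{m:d}\bm\nu\xrightarrow{m:d+1}\bm\tau} v^{N_d(\bm\la^{+\bm k},\bm\nu)+N_{d+1}(\bm\nu,\bm\tau)}\,\bm\tau.
\]

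The next step is to identify the two indexing sets. This is precisely the content of \cref{newrunnerdd+1}: the map sending $\bm\xi$ to $(\bm\nu,\bm\xi^{+\bm k})$, where $\bm\nu$ is the unique intermediate obtained by moving the newly placed beads from the inserted runner on to runner $d+1$, yields a bijection between $\{\bm\xi:\bm\la\xrightarrow{m:d}\bm\xi\}$ and $\{(\bm\nu,\bm\tau):\bm\la^{+\bm k}\xrightarrow{m:d}\bm\nu\xrightarrow{m:d+1}\bm\tau\}$. The direction that requires care---showing that every such pair $(\bm\nu,\bm\tau)$ really arises this way, with $\bm\tau=\bm\xi^{+\bm k}$ for some $\bm\xi$ with $\bm\la\xrightarrow{m:d}\bm\xi$---is precisely the converse direction of \cref{newrunnerdd+1}. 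Under this identification, \cref{Ndd+1} matches the powers of $v$ via $N_d(\bm\la,\bm\xi)=N_d(\bm\la^{+\bm k},\bm\nu)+N_{d+1}(\bm\nu,\bm\xi^{+\bm k})$. Combining the bijection with this additive identity yields the claimed equality.

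I do not anticipate any serious obstacle, as the statement is essentially a book-keeping consequence of the preceding two lemmas. The only subtle point worth emphasising is that the uniqueness of the intermediate $\bm\nu$ depends crucially on the emptiness of the inserted multirunner: only those beads just placed on runner $d$ by $\mathfrak{F}_d^{(m)}$ are available to be moved further to runner $d+1$, since every original bead on the inserted runner sits in a top row that is completely filled and hence cannot slide to the right.
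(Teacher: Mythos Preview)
Your proposal is correct and follows the same approach as the paper, which simply states that the result is immediate from Lemmas~\ref{newrunnerdd+1} and~\ref{Ndd+1} together with the formula for the action of $f_i^{(m)}$. Your write-up expands this one-line justification into the explicit term-by-term matching, and you correctly flag that the uniqueness of the intermediate $\bm\nu$ (and the fact that every $\bm\tau$ in the double sum is necessarily of the form $\bm\xi^{+\bm k}$) is established in the converse direction of Lemma~\ref{newrunnerdd+1} via the emptiness of the inserted multirunner.
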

\begin{proof}
This is immediate from Lemmas \ref{newrunnerdd+1} and \ref{Ndd+1} and the description of the action of $f_{i}^{(m)}$ in Section \ref{algsec}.
\end{proof}


\section{Empty runner removal theorem}\label{sec:main thm}
In this section we prove an `empty runner removal' theorem for the Ariki-Koike algebra $\aks$. 

Fix $d\in I$. Let $\bm\mu=(\mu^{(1)},\dots,\mu^{(r)})$ be an $r$-multipartition of $n$. Let $\bm k=(k^{(1)},\dots,k^{(r)})\in(\Z_{\leq0})^{r}$ such that 
\begin{equation}\label{same_runn_d}
k^{(j)}+s_j\equiv d\ (\mod e)
\end{equation}
for all $1\le j\le r$, and
\begin{align}\label{conditiononk^j}
     k^{(1)}&\leq -l(\mu^{(1)}),  \\ 
     k^{(j)}&\leq  k^{(j-1)}-l(\mu^{(j)})-e, \text{      for all }1<j\leq r. \notag
\end{align}
Choose a multicharge $\bm a=(a_1,\dots,a_r)$ for $\aks$ such that $a_j\ge\mathrm{max}\{l(\mu^{(j)}),k^{(j)}\}$ for all $j$, and consider the abacus configuration $\mathrm{Ab}_e^{\bm a}(\bm\mu)$. Write \[a_j+k^{(j)}=c_je+d\] with $c_j\ge0$, and consider the multipartition $\bm\mu^{+\bm k}$ obtained as explained in Section \ref{def+k}. Note that the abacus display obtained has $\bm b=(b_1,\dots,b_r)$ beads where $b_j=a_j+c_j$ for all $1\le j\le r$. Note that $b_j\equiv s_j^+\ (\mod e+1)$ for all $j$.


\begin{rmk}\label{rmkempty}
    Conditions \eqref{conditiononk^j} on $\bm k$ imply that the inserted multirunner of $\bm\mu^{+\bm k}$ is empty by \cref{emptyeq} and \cref{def_empty_mlt}. 
\end{rmk}

We begin with a result that exhibits another useful implication of the conditions \eqref{conditiononk^j} on $\bm k$. Let $k\vcentcolon= k^{(1)}$ and write $-k=k_1e + k_2$ with $k_1\geq 0$ and $0 \leq k_2\leq e-1$. 
\begin{lem}\label{xle}
    Let $2\le j\le r$. The smallest empty position in the abacus display $\mathrm{Ab}_e^{a_j}(\mu^{(j)})$ is at least at level $c_j+k_1+1$.
\end{lem}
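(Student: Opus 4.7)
The plan is a direct calculation using \cref{smempty} together with the hypotheses on $\bm k$.

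First I would recall that, by \cref{smempty}, the smallest empty position in $\mathrm{Ab}_e^{a_j}(\mu^{(j)})$ is at position $a_j-l(\mu^{(j)})$. Since position $p$ sits at level $\lfloor p/e\rfloor$, the statement to prove becomes
\[
a_j-l(\mu^{(j)})\;\geq\;(c_j+k_1+1)e.
\]
Using the defining equation $a_j+k^{(j)}=c_je+d$, this is equivalent to
\[
-k^{(j)}-l(\mu^{(j)})\;\geq\;(k_1+1)e-d,
\]
so the whole lemma reduces to a lower bound on $-k^{(j)}-l(\mu^{(j)})$.

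Next I would exploit the two clauses of \eqref{conditiononk^j}. The second clause immediately gives, for $j\geq 2$,
\[
-k^{(j)}-l(\mu^{(j)})\;\geq\;-k^{(j-1)}+e.
\]
On the other hand, iterating that same clause shows $k^{(j-1)}\leq k^{(j-2)}\leq\dots\leq k^{(1)}=k$, since each $l(\mu^{(i)})\geq 0$. Combined with $-k=k_1e+k_2$, this yields $-k^{(j-1)}\geq k_1e+k_2$, and hence
\[
-k^{(j)}-l(\mu^{(j)})\;\geq\;(k_1+1)e+k_2.
\]
Substituting back gives $a_j-l(\mu^{(j)})\geq (c_j+k_1+1)e+(d+k_2)$, and since $d+k_2\geq 0$ the level of this position is at least $c_j+k_1+1$, as required.

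The argument is routine; there is no serious obstacle. The only subtlety worth flagging is that the chain bound $-k^{(j-1)}\geq -k$ uses the cumulative effect of \eqref{conditiononk^j}, not just the inequality for index $j$, so it is the recursive structure of the hypothesis on $\bm k$ (rather than any one inequality in isolation) that ultimately enforces emptiness of the inserted multirunner uniformly across all components $j\geq 2$.
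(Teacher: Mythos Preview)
Your proof is correct and follows essentially the same approach as the paper: both use \cref{smempty} to locate the smallest empty position, then iterate condition \eqref{conditiononk^j} to bound $-k^{(j)}-l(\mu^{(j)})$ from below by $-k+e$ plus nonnegative terms, yielding the required level inequality. The paper phrases the computation as showing $x-c_j>k_1$ (writing $a_j-l(\mu^{(j)})=xe+i$), while you equivalently show $a_j-l(\mu^{(j)})\geq(c_j+k_1+1)e$, but the substance is identical.
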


\begin{proof}
    Let $x$ be the level of the smallest empty position of $\mathrm{Ab}_e^{a_j}(\mu^{(j)})$. By \cref{smempty}, we have that $a_j-l(\mu^{(j)})=xe+i$ for some $x\ge0$ and $0\le i\le e-1$. Then,
    \begin{align*}
        x -c_j &= \dfrac{1}{e}(a_j-l(\mu^{(j)})-i)- \dfrac{1}{e}(a_j+k^{(j)}-d)\\
        & = \dfrac{1}{e}(-l(\mu^{(j)})-k^{(j)}-i+d)\\
        & \geq \dfrac{1}{e}(-k+\sum_{t=2}^{j-1} l(\mu^{(t)})+(j-1)e-i+d) \\
        & \geq \dfrac{1}{e}(-k+e-i+d) \\
        & =  \dfrac{1}{e}(k_1e+k_2+e-i+d)\\
        & =  k_1+ \dfrac{1}{e}(k_2+e-i+d)\\
        & >  k_1,
    \end{align*}
    and the thesis follows.
\end{proof}

Recall that $\bm\mu_-=(\mu^{(2)},\dots,\mu^{(r)})$ and $\bm\mu_0=(\varnothing,\bm\mu_-)$, and denote  $\bm k_-=(k^{(2)},\dots,k^{(r)})$. The next result shows that the sequence of operators of \cref{indseq} applied to the $r$-multipartition $(\varnothing,\bm\mu_-^{+\bm k_-})$ operates  only on the first component.

\begin{prop}\label{emptyto+k_r}
Let  $\alpha =b_1+k_1$. Then,
\begin{equation*}
\mathfrak{F}^{(k_1+1)}_{{\alpha-k_2+1}} \dots \mathfrak{F}^{(k_1+1)}_{{\alpha-1}}\mathfrak{F}^{(k_1+1)}_{{\alpha}}\mathcal{G}^{(k_1)}_{{\alpha-1}}\mathcal{G}^{(k_1-1)}_{{\alpha-2}} \ldots \mathcal{G}^{(1)}_{{b_1}}((\varnothing, \bm\mu_-^{+\bm k_-}))=\bm\mu_0^{+\bm k},    
\end{equation*}
where $\mathfrak{F}^{(k_1+1)}_{{\alpha-k_2+1}} \dots \mathfrak{F}^{(k_1+1)}_{{\alpha-1}}\mathfrak{F}^{(k_1+1)}_{{\alpha}}$ occurs if and only if $k_2\neq 0$.
\end{prop}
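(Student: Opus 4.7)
The plan is to lift the partition identity $\mathcal{S}(\varnothing)=\varnothing^{+k}$ from \cref{indseq} to the multipartition identity $\mathcal{S}((\varnothing,\bm\mu_-^{+\bm k_-}))=(\varnothing^{+k},\bm\mu_-^{+\bm k_-})$, where $\mathcal{S}$ denotes the full sequence of operators in the statement. I would induct on $-k\geq 0$. The base case $-k=0$ is immediate: the sequence is empty (since $k_1=k_2=0$) and $\varnothing^{+0}=\varnothing$ by \cref{empty+k}, so both sides equal $(\varnothing,\bm\mu_-^{+\bm k_-})$.

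For the inductive step, I would split off the leftmost block of $e+1$ consecutive operators of $\mathcal{S}$, whose action on partitions is governed by \cref{seqindnot0}. By the induction hypothesis applied to the remaining (shorter) prefix, this prefix sends $(\varnothing,\bm\mu_-^{+\bm k_-})$ to $(\varnothing^{+(k+e+1)},\bm\mu_-^{+\bm k_-})$. It then suffices to verify the single-block multipartition identity
\[(\mathrm{block})\bigl((\varnothing^{+(k+e+1)},\bm\mu_-^{+\bm k_-})\bigr)=(\varnothing^{+k},\bm\mu_-^{+\bm k_-}).\]

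The main technical obstacle is this block-level identity. For operators in the block of residue $i\not\equiv d\pmod{e+1}$, the other components $\mu^{(j)+k^{(j)}}$ contribute no addable $i$-nodes at the relevant levels: by \cref{xle}, the abacus of $\mu^{(j)+k^{(j)}}$ for each $j\geq 2$ has all positions at levels $c_j,\ldots,c_j+k_1$ filled with beads outside the inserted runner $d$, so such operators act cleanly on the first component alone. The delicate case is operators of residue $i\equiv d\pmod{e+1}$, which do have many addable $d$-nodes in the components $j\geq 2$ and hence produce multiple intermediate branches during the block. Here \cref{seqindnot0} is the crucial tool: it pins down the multiplicity pattern of any non-zero length-$(e+1)$ action on $\varnothing^{+(k+e+1)}$. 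In the multipartition refinement of this analysis, any branch that places some $d$-nodes in components $j\geq 2$ creates a deficit in the first component which a subsequent operator of the block (of residue distinct from $d$) cannot compensate, since no addable nodes of that residue exist in the other components at the active levels. All such non-principal branches therefore contribute $0$, leaving only the principal branch (all nodes added to the first component), which survives with coefficient $1$ via an $N_i$ computation identical to the one in the proof of \cref{indseq} thanks to \cref{Ncomps}: the addable nodes in components $j\geq 2$ lie below the newly added first-component nodes in the $\succ$-ordering and thus contribute nothing to the relevant $N_i$.
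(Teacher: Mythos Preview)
Your overall induction-on-$(-k)$ structure matches the paper's, and your $N_i$ observation at the end (that addable nodes in components $j\geq 2$ lie below first-component nodes, so the principal branch has coefficient~$1$) is correct.  However, there are two gaps.

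The minor one: your base case is $-k=0$, but your inductive step (peeling off a block of $e{+}1$ operators) only applies for $-k\geq e{+}1$, so the cases $1\leq -k\leq e$ are not covered.  The paper treats these directly with a separate short argument.

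The substantive gap is your claim that for residues $i\not\equiv d$ the other components ``contribute no addable $i$-nodes at the relevant levels,'' so such operators act on the first component alone.  This is false.  \Cref{xle} only says that in $(\mu^{(j)})^{+k^{(j)}}$ the levels $c_j,\ldots,c_j+k_1$ are fully occupied outside runner $d$; it says nothing about higher levels, where $\mu^{(j)}$ (an arbitrary partition) may well have addable $i$-nodes for any $i\neq d{+}1$.  So at each step of the block, nodes can genuinely be placed in components $j\geq 2$, and those branches cannot be discarded by the reasoning you give.  Your subsequent ``deficit'' argument for the residue-$d$ step relies on the same false premise.

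The paper handles this as follows.  It lets the $i$-th operator in the block place $p_i$ nodes in components $j\geq 2$ and $q_i$ in component $1$ (with $p_i+q_i$ prescribed), allowing $p_i>0$ in principle.  The key claim is $p_{e+1}\leq p_1$, proved component by component: the only beads on runner $d$ of component $j$ available to the final operator are the $n_j$ beads placed there by the first operator and the top bead at level $c_j-1$ of the inserted runner; but if that top bead is used, a chain argument through the full levels guaranteed by \cref{xle} forces $n_j\geq k_1+1$, contradicting $n_j\leq p_1\leq k_1$.  Hence $m_j\leq n_j$ and $p_{e+1}\leq p_1$.  Combined with \cref{seqindnot0} (which gives $q_{e+1}\leq q_1+1$, i.e.\ $p_{e+1}\geq p_1$), this forces equality, and then the equality clause of \cref{seqindnot0} pins down every $q_i$ at its maximum, so all $p_i=0$.
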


\begin{proof}
Consider the $(e+1)$-abacus configuration of the $r$-multipartition $(\varnothing,\bm\mu_-^{+\bm k_-})$ with multicharge $\bm b$.

Suppose $-e\le k\le -1$. By Proposition \ref{indseq} we have $$\varnothing^{+k}=
\mathfrak{F}^{(1)}_{{b_1+k+1}} \ldots \mathfrak{F}^{(1)}_{{b_1-1}}\mathfrak{F}^{(1)}_{{b_1}}
(\varnothing).
$$
Now we apply the same sequence of operators to $(\varnothing, \bm\mu_-^{+\bm k_-})$ and we show that the only resulting multipartition is $\bm\mu_0^{+\bm k}=(\varnothing^{+k},(\mu^{(2)})^{+k^{(2)}},\dots,(\mu^{(r)})^{+k^{(r)}})$. We first notice that ${b_1+k+1}\equiv d+1\ (\mod e+1)$, meaning that the residues of the operators can be rewritten from left to right as ${d+1},{d+2},\dots,{d-k}$. Note that none of these residues is congruent to $d\ (\mod e+1)$. If one operator, say $\mathfrak{F}^{(1)}_{{d+j}}$ for some $j\ge2$, acts as non-zero on $\bm\mu_-^{+\bm k_-}$, then the successive operator, $\mathfrak{F}^{(1)}_{{d+j-1}}$, must act on $\mathfrak{F}^{(1)}_{{d+j}}(\bm\mu_-^{+\bm k_-})$ because the partition $\mathfrak{F}^{(1)}_{{d+j+1}} \ldots\mathfrak{F}^{(1)}_{{d-k}}(\varnothing)$ has not addable $(d+j-1)$-nodes. Inductively, the last operator $\mathfrak{F}^{(1)}_{{d+1}}$ must act on $\mathfrak{F}^{(1)}_{{d+2}} \ldots\mathfrak{F}^{(1)}_{{d+j}}(\bm\mu_-^{+\bm k_-})$ and acts as 0. Indeed, the multirunner $d$ in the abacus of every term of $\mathfrak{F}^{(1)}_{{d+2}} \ldots\mathfrak{F}^{(1)}_{{d+j}}(\bm\mu_-^{+\bm k_-})$ is empty as it has not be affected by any of the operators, then none of the terms has addable $(d+1)$-nodes. We conclude that
$$\mathfrak{F}^{(1)}_{{d+1}} \ldots\mathfrak{F}^{(1)}_{{d-k}}(\varnothing,\bm\mu_-^{+\bm k_-}) =\bm\mu_0^{+\bm k}.$$

Suppose that $k<-e$ and hence that $k_1\ge1$. Working by induction on $-k$, we can assume that
\begin{align*}
\mathfrak{F}^{(k_1)}_{{\alpha-k_2+1}} \ldots\mathfrak{F}^{(k_1)}_{{\alpha-1}}\mathcal{G}^{(k_1-1)}_{{\alpha-2}} \ldots \mathcal{G}^{(1)}_{{b_1}}((\varnothing, \bm\mu_-^{+\bm k_-}))=(\varnothing^{+(k+e+1)}, \bm\mu_-^{+\bm k_-}).
\end{align*}
Thus, assuming $k_2\ne0$ (the case $k_2=0$ being similar), it is left to show that
\[\mathfrak{F}^{(k_1+1)}_{{\alpha-k_2+1}} \mathfrak{F}^{(k_1+1)}_{{\alpha-k_2+2}}\ldots\mathfrak{F}^{(k_1+1)}_{{\alpha}}\mathfrak{F}^{(k_1)}_{{\alpha+1}} \mathfrak{F}^{(k_1)}_{{\alpha+2}}\ldots\mathfrak{F}^{(k_1)}_{{\alpha-k_2}}(\varnothing^{+(k+e+1)}, \bm\mu_-^{+\bm k_-})=\bm\mu_0^{+\bm k}.\]

Suppose by contradiction that after applying the above induction sequence to $(\varnothing^{+(k+e+1)}, \bm\mu^{+\bm k})$, we have a non-zero term $(\xi,\bm\nu)\ne(\varnothing^{+k},\bm\mu_-^{+\bm k_-})$. Having that $\alpha-k_2\equiv d\ (\mod e+1)$, we relabel the residues of the $e+1$ operators as ${d+1},\dots,d$ from left to right.

For each $1\le i\le e+1$, let $p_i,q_i\ge0$ be such that
\[\mathfrak{F}_{{d+1}}^{(p_{e+1})}\ldots \mathfrak{F}_{{d}}^{(p_{1})}(\bm\mu_-^{+\bm k_-})=\bm\nu,\]
\[\mathfrak{F}_{{d+1}}^{(q_{e+1})}\ldots \mathfrak{F}_{{d}}^{(q_{1})}(\varnothing^{+(k+e+1)})=\xi.\]
Note that assuming that there is a non-zero term $(\xi, \bm\nu)\ne(\varnothing^{+k},\bm\mu_-^{+\bm k_-})$ is equivalent to saying that $p_i\ne0$ for some $i$. Moreover, observe that $$
p_i+q_i= \begin{cases}
k_1 & \text{if } i\in\{1,\ldots,e+1-k_2\}, \\
k_1+1 & \text{if } i\in\{e+2-k_2,\ldots,e+1\}.
\end{cases}$$
We claim that having a non-zero term $(\xi,\bm\nu)$ implies that $p_{e+1}\le p_1$. 


Let $\bm\nu_{1},\dots,\bm\nu_{e}$ be the $(r-1)$-multipartitions such that \[\bm\mu_-^{+\bm k_-}\xrightarrow{p_1:d} \bm\nu_{1}\xrightarrow{p_2:d-1}\dots\xrightarrow{p_e:d+2}\bm\nu_e\xrightarrow{p_{e+1}:d+1}\bm\nu.\]
The components of these multipartitions are indexed from 2 to $r$. For $2\le j\le r$, let $n_j$ be the number of beads moved to get $\nu_1^{(j)}$ from $(\mu^{(j)})^{+k^{(j)}}$ and $m_j$ the number of beads moved to get $\nu^{(j)}$ from $\nu_e^{(j)}$. Clearly, $n_2+\dots+n_r=p_1$ and $m_2+\dots+m_r=p_{e+1}$. We show that $m_j\le n_j$ for all $j$.

By \cref{rmkempty}, the (inserted) runner $d$ of $\mathrm{Ab}_{e+1}^{b_j}((\mu^{(j)})^{+k^{(j)}})$ is empty. Then, the movable $m_j$ beads on runner $d$ of $\mathrm{Ab}_{e+1}^{b_j}(\nu_e^{(j)})$ are either the $n_j$ beads moved at the first step or the bead at position $d+(c_j-1)(e+1)$. We deduce that $m_j\le n_j+1$. Suppose by contradiction that there is a component $J\in\{2,\dots,r\}$ such that $m_J=n_J+1$. We look at $\mathrm{Ab}_{e+1}^{b_j}((\mu^{(J)})^{+k^{(J)}})$. By \cref{xle}, after the bead at position $(c_J-1)(e+1)+d$ - the largest along the inserted runner - there are at least $k_1+1$ sequences of $e$ consecutive beads detached by spaces on runner $d$. Since we are assuming that the bead at $(c_J-1)(e+1)+d$ in $\mathrm{Ab}_{e+1}^{b_J}(\nu_e^{(J)})$ is followed by a space, we have that the bead at level $c_J$ of runner $d-1$ of $\mathrm{Ab}_{e+1}^{b_J}((\mu^{(J)})^{+k^{(J)}})$ must be one of the $n_J$ beads moved at the first step. Applying this reasoning inductively we find that $n_J\ge k_1+1$ having a contradiction because $n_J\le p_1\le k_1$. Thus, for every $2\le j\le r$, $m_j\le n_j$, and hence, $p_{e+1}\le p_1$.

Now we can conclude. Indeed, it follows that $q_{e+1}\ge q_1+1$. Lemma \ref{seqindnot0} then forces $q_{e+1}=q_1+1$ and says that $q_1=\dots=q_{e+1-k_2}=k_1$ and $q_{e+2-k_2}=\dots=q_{e+1}=k_1+1$ in this case. Therefore, $p_i=0$ for every $1\le i\le e+1$, giving the desired contradiction.
\end{proof}

The above proposition gives the following result on the canonical basis coefficients of $(\varnothing,\bm\mu_-^{+\bm k_-})$ and $\bm\mu_0^{+\bm k}=(\varnothing^{+ k},\bm\mu_-^{+\bm k_-})$.

\begin{prop}\label{canbasis_empty}
Suppose that
$$G^{\bm s^+}_{e+1}((\varnothing,\bm\mu_-^{+\bm k_-}))=\sum_{\bm\mu_-\trianglerighteq\bm\la_-}d_{\bm\la_-\bm\mu_-}^{\bm s_-}(v) (\varnothing,\bm\la_-^{+\bm k_-})$$
where $d_{\bm\la_-\bm\mu_-}^{\bm s_-}(v) \in v\mathbb{N}[v]$ for $\bm\la_- \neq \bm\mu_-$. Then,
$$G^{\bm s^+}_{e+1}(\bm\mu_0^{+\bm k})=\sum_{\bm\mu_-\trianglerighteq\bm\la_-}d_{\bm\la_-\bm\mu_-}^{\bm s_-}(v) \bm\la_0^{+\bm k}.$$
\end{prop}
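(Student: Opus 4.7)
The plan is to apply the bar-invariant sequence of operators from \cref{emptyto+k_r} to the given expansion of $G^{\bm s^+}_{e+1}((\varnothing,\bm\mu_-^{+\bm k_-}))$ and then invoke the uniqueness characterisation in \cref{decdom}. Set
$$\mathcal{S} := \mathfrak{F}^{(k_1+1)}_{\alpha-k_2+1}\cdots \mathfrak{F}^{(k_1+1)}_{\alpha-1}\mathfrak{F}^{(k_1+1)}_{\alpha}\mathcal{G}^{(k_1)}_{\alpha-1}\mathcal{G}^{(k_1-1)}_{\alpha-2}\cdots\mathcal{G}^{(1)}_{b_1},$$
with the prefix $\mathfrak{F}^{(k_1+1)}_{\alpha-k_2+1}\cdots \mathfrak{F}^{(k_1+1)}_{\alpha}$ suppressed when $k_2=0$. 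Each divided power $\mathfrak{F}^{(\ell)}_j$ is bar-invariant, hence so is $\mathcal{S}$, and therefore $\mathcal{S}\bigl(G^{\bm s^+}_{e+1}((\varnothing,\bm\mu_-^{+\bm k_-}))\bigr)$ is bar-invariant as well.

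The key step I would establish next is that $\mathcal{S}$ acts term-by-term on the hypothesised expansion by sending every summand $(\varnothing,\bm\la_-^{+\bm k_-})$ to $\bm\la_0^{+\bm k}$; this is the conclusion of \cref{emptyto+k_r} applied with $\bm\la$ in place of $\bm\mu$. The main obstacle is precisely this verification: one must show that the combinatorial hypotheses underpinning \cref{emptyto+k_r} and \cref{xle} (the empty-multirunner condition on each component $(\la^{(j)})^{+k^{(j)}}$ and the cascading length bounds on $l(\la^{(j)})$) persist for every $\bm\la_-$ that occurs with non-zero coefficient. The dominance $\bm\mu_-\trianglerighteq\bm\la_-$ alone does not propagate componentwise length bounds, so the argument should exploit that $\bm\la_-^{+\bm k_-}$ lies in the same block as $\bm\mu_-^{+\bm k_-}$ and shares its $(e+1)$-multicore via \cref{multireg_pres}, which pins down enough of the abacus configuration of $\bm\la_-^{+\bm k_-}$ for the proof of \cref{emptyto+k_r} to go through verbatim.

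Granting this, applying $\mathcal{S}$ term-by-term yields
$$\mathcal{S}\,G^{\bm s^+}_{e+1}((\varnothing,\bm\mu_-^{+\bm k_-})) = \sum_{\bm\mu_-\trianglerighteq\bm\la_-} d^{\bm s_-}_{\bm\la_-\bm\mu_-}(v)\,\bm\la_0^{+\bm k}.$$
This vector is bar-invariant; its coefficient at $\bm\mu_0^{+\bm k}$ equals $d^{\bm s_-}_{\bm\mu_-\bm\mu_-}(v)=1$; and all other coefficients lie in $v\mathbb{Z}[v]$ by hypothesis. Moreover, $\bm\mu_-\trianglerighteq\bm\la_-$ gives $\bm\mu_0\trianglerighteq\bm\la_0$, and this dominance is preserved by the operator $^{+\bm k}$ because $^{+\bm k}$ adjoins the same runner (with the same beads) in corresponding components of $\bm\mu_0$ and $\bm\la_0$; hence $\bm\mu_0^{+\bm k}\trianglerighteq\bm\la_0^{+\bm k}$ on the support of the sum. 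By the uniqueness of the canonical basis vector asserted in \cref{decdom}, the displayed sum equals $G^{\bm s^+}_{e+1}(\bm\mu_0^{+\bm k})$, finishing the proof.
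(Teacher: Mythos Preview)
Your argument is exactly the paper's: apply the bar-invariant operator sequence from \cref{emptyto+k_r} term by term to the hypothesised expansion of $G^{\bm s^+}_{e+1}((\varnothing,\bm\mu_-^{+\bm k_-}))$ and then invoke the uniqueness of the canonical basis vector. The paper likewise applies \cref{emptyto+k_r} with $\bm\la_-$ in place of $\bm\mu_-$ for every summand but does so without comment, so the verification you flag is a point the paper itself leaves implicit (your suggested fix via shared $(e+1)$-multicores is not quite the right justification, since same block need not force same multicore when $r>2$), and the dominance check you append is harmless but unnecessary for the uniqueness step.
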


\begin{proof}
We apply the sequence of operators of Proposition \ref{indseq} to $G^{\bm s^+}_{e+1}((\varnothing,\bm\mu_-^{+\bm k_-}))$:
\begin{equation}\label{actionG}
\mathfrak{F}^{(k_1+1)}_{{\alpha-k_2+1}}\ldots \mathfrak{F}^{(k_1+1)}_{{\alpha-1}}\mathfrak{F}^{(k_1+1)}_{{\alpha}}\mathcal{G}^{(k_1)}_{{\alpha-1}}\mathcal{G}^{(k_1-1)}_{{\alpha-2}} \ldots \mathcal{G}^{(1)}_{{b_1}}
(G^{\bm s^+}_{e+1}(\varnothing, \bm\mu_-^{+\bm k_-})),
\end{equation}
where $\alpha = b_1+k_1$ and $\mathfrak{F}^{(k_1+1)}_{{\alpha-k_2+1}}\ldots \mathfrak{F}^{(k_1+1)}_{{\alpha}}$ occurs if and only if $k_2\neq 0$.

By Proposition \ref{emptyto+k_r}, we have
\begin{align*}
    \eqref{actionG} &=\sum_{\bm\mu_-\trianglerighteq\bm\la_-}d_{\bm\la_-\bm\mu_-}^{\bm s_-}(v) \mathfrak{F}^{(k_1+1)}_{{\alpha-k_2+1}}\ldots \mathfrak{F}^{(k_1+1)}_{{\alpha-1}}\mathfrak{F}^{(k_1+1)}_{{\alpha}}\mathcal{G}^{(k_1)}_{{\alpha-1}}\mathcal{G}^{(k_1-1)}_{{\alpha-2}} \ldots \mathcal{G}^{(1)}_{{b_1}}((\varnothing,\bm\la_-^{+\bm k_-}))\\
    &=\sum_{\bm\mu_-\trianglerighteq\bm\la_-}d_{\bm\la_-\bm\mu_-}^{\bm s_-}(v) \bm\la_0^{+\bm k},
\end{align*}
which is of the form $\bm\mu_0^{+\bm k}+\sum\limits_{\bm\mu_-\triangleright\bm\la_-}d_{\bm\la_-\bm\mu_-}^{\bm s_-}(v)\bm\la_0^{+\bm k}$.
Hence, the desired equality follows by the uniqueness of the canonical basis vectors.
\end{proof}

We set some notation before stating the very last preliminary result. If $\mathfrak{f}={f}_{i_l}^{(h_l)} \cdots {f}_{i_1}^{(h_1)}$ is an operator with $h_1, \ldots, h_l\ge0$ and $i_1, \ldots, i_l \in I$, we denote by $\mathfrak{F}$ the operator obtained in the following way: for all $j=1, \ldots, l$
\begin{itemize}
    \item if $i_j\neq d$, replace ${f}_{i_j}^{(h_j)}$ with $\mathfrak{F}^{(h_j)}_{g(i_j)}$,
    \item if $i_j=d$, replace ${f}_d^{(h_j)}$ with $\mathfrak{F}^{(h_j)}_{d+1}\mathfrak{F}^{(h_j)}_{d}$.
\end{itemize}

\begin{prop}\label{same_coeff}

Let $\bm\mu_-$ be an $e$-multiregular $(r-1)$-multipartition. 
Let $\mathfrak{f}=f_{i_l}^{(h_l)} \cdots f_{i_1}^{(h_1)}$ be such that $\mathfrak{f}\cdot\varnothing = \mu^{(1)} +\sum\limits_{\mu^{(1)} \triangleright  \tau} t_{\tau}\tau$ for $t_{\tau}\in \Z[q, q^{-1}]$. Suppose that 
$$\mathfrak{f} \cdot G_e^{\bm s}(\bm\mu_0)=\sum_{\bm\nu\in \mathcal{P}^{r}}g_{\bm\nu}\bm\nu,$$
where $g_{\bm\nu}\in \mathbb{Z}[q, q^{-1}]$. Then, $$\mathfrak{F} \cdot G_{e+1}^{\bm s^{+}}(\bm\mu_0^{+\bm k })=\sum_{\bm\nu\in \mathcal{P}^{r}}g_{\bm\nu}\bm\nu^{+\bm k }.$$
\end{prop}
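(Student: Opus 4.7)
The plan is to establish the identity in two stages: first, to relate the canonical basis element $G^{\bm s^+}_{e+1}(\bm\mu_0^{+\bm k})$ to $G^{\bm s}_e(\bm\mu_0)$ through the $^{+\bm k}$ operator, and then to show that $\mathfrak{F}$ commutes with $^{+\bm k}$ when applied to this particular vector. Throughout, I assume by induction on $r$ that \cref{main} holds for $(r-1)$-multipartitions (as a vector identity between canonical basis elements).

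For the first stage, I would expand $G^{\bm s}_e(\bm\mu_0) = \sum_{\bm\nu\in\mathcal{P}^{r-1}} d^{\bm s_-}_{\bm\nu\bm\mu_-}(v)(\varnothing,\bm\nu)$ by \cref{fcomp} and apply $^{+\bm k}$ linearly. The inductive form of \cref{main} applied to $\bm\mu_-$, combined with \cref{fcomp} in level $(e+1)$, guarantees that the hypothesis of \cref{canbasis_empty} is satisfied, and that proposition then yields $G^{\bm s^+}_{e+1}(\bm\mu_0^{+\bm k}) = \big(G^{\bm s}_e(\bm\mu_0)\big)^{+\bm k}$.

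For the second stage, it suffices to show $\mathfrak{F}\big((G^{\bm s}_e(\bm\mu_0))^{+\bm k}\big) = \big(\mathfrak{f}\cdot G^{\bm s}_e(\bm\mu_0)\big)^{+\bm k}$. By linearity this reduces to the local statement that, for each $\bla=(\varnothing,\bm\nu)$ occurring in the expansion of $G^{\bm s}_e(\bm\mu_0)$, one has $\mathfrak{F}(\bla^{+\bm k}) = (\mathfrak{f}(\bla))^{+\bm k}$. I would prove this by induction on the number of factors of $\mathfrak{f}$, strengthened by the additional assertion that every intermediate multipartition $\bm\pi$ appearing in the expansion still has $\bm\pi^{+\bm k}$ with empty inserted multirunner. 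The inductive step applies \cref{corineqd} for each factor $f_i^{(h)}$ with $i\ne d$ (and the empty property is automatically preserved since $\mathfrak{F}^{(h)}_{g(i)}$ never moves beads onto or off multirunner $d$), and \cref{corieqd} for $i=d$ (where the paired operator $\mathfrak{F}^{(h)}_{d+1}\mathfrak{F}^{(h)}_{d}$ fills and then empties multirunner $d$ again).

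The main obstacle is maintaining the empty inserted multirunner property along the entire induction, since \cref{corieqd} cannot be applied otherwise. This is a bookkeeping argument combining \cref{emptyeq} with the mechanics of \cref{newrunnerdd+1}: the crucial observation is that after applying $\mathfrak{F}^{(h)}_{d+1}\mathfrak{F}^{(h)}_{d}$ to a multipartition whose inserted multirunner is empty, the beads that $\mathfrak{F}^{(h)}_d$ moves onto multirunner $d$ are precisely those $\mathfrak{F}^{(h)}_{d+1}$ moves off it, so multirunner $d$ is empty again in each resulting multipartition. Combining the two stages gives $\mathfrak{F}\cdot G^{\bm s^+}_{e+1}(\bm\mu_0^{+\bm k}) = \big(\mathfrak{f}\cdot G^{\bm s}_e(\bm\mu_0)\big)^{+\bm k} = \sum_{\bm\nu}g_{\bm\nu}\bm\nu^{+\bm k}$, as required.
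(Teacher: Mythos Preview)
Your approach is essentially the same as the paper's: both arguments combine \cref{fcomp}, \cref{canbasis_empty}, and the two corollaries \cref{corineqd}, \cref{corieqd} to obtain the chain
\[
\big(\mathfrak{f}\cdot G^{\bm s}_e(\bm\mu_0)\big)^{+\bm k}
=\mathfrak{F}\cdot\sum_{\bm\la_-}d^{\bm s_-}_{\bm\la_-\bm\mu_-}(v)\,\bm\la_0^{+\bm k}
=\mathfrak{F}\cdot G^{\bm s^+}_{e+1}(\bm\mu_0^{+\bm k}),
\]
and you simply make explicit the induction on $r$ and the factor-by-factor iteration that the paper leaves to the reader.

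One caution on your bookkeeping in the $i=d$ step. Your claim that ``multirunner $d$ is empty again'' after the pair $\mathfrak{F}^{(h)}_{d+1}\mathfrak{F}^{(h)}_d$ is correct in the sense that multirunner $d$ returns to its initial bead configuration (beads at levels $0,\ldots,c_j-1$), but this is \emph{not} the ``empty inserted multirunner'' condition of \cref{def_empty_mlt}, which demands that the smallest space in the \emph{whole} abacus lie on multirunner~$d$. If one of the beads moved by the pair sits at level $c_j$ on multirunner $d-1$, then after the pair multirunner $d-1$ acquires a space at position $c_j(e+1)+(d-1)$, which precedes the smallest space on multirunner $d$. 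So your justification as stated does not establish the hypothesis needed to apply \cref{corieqd} at the next $f_d$ factor. The paper's own proof is silent on this iterative verification as well; if you want to make your argument airtight, you should track a slightly weaker invariant (for instance, that runner $d$ of the $e$-abacus of each component has beads at all levels~$<c_j$), which is exactly what the proof of \cref{newrunnerdd+1} actually uses.
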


\begin{proof}
By the linearity of $^{+\bm k}$ we have that
$$\left(\mathfrak{f} \cdot G_e^{\bm s}(\bm\mu_0)\right)^{+\bm k}=\sum_{\bm\nu\in \mathcal{P}^{r}}g_{\bm\nu}\bm\nu^{+\bm k}.$$
Moreover, we get that
\begin{align*}
    \left(\mathfrak{f} \cdot G_e^{\bm s}(\bm\mu_0)\right)^{+\bm k}&=\left(\mathfrak{f} \cdot \sum_{\bm\mu_-\trianglerighteq\bm\la_-}d_{\bm\la_-\bm\mu_-}^{\bm s_-}(v) \bm\la_0 \right)^{+\bm k}&& \text{by Prop. \ref{fcomp}}\\
    &=\mathfrak{F} \cdot \sum_{\bm\mu_-\trianglerighteq\bm\la_-}d_{\bm\la_-\bm\mu_-}^{\bm s_-}(v) \bm\la_0^{+\bm k} && \text{by Cor. \ref{corineqd}, \ref{corieqd}}\\
    &=\mathfrak{F} \cdot G^{\bm s^+}_{e+1}(\bm\mu_0^{+\bm k }). && \text{by Prop. \ref{canbasis_empty}}
\end{align*}
Comparing the two equalities above, the thesis follows.
\end{proof}

We now come to the end of this paper. The next result shows that the operator $^{+\bm k}$ commutes with the canonical basis vectors. This directly implies \cref{main}.

\begin{thm}\label{canbasis_runrem}
Suppose that $\bm\mu$ is an $e$-multiregular multipartition.
Then $$G^{\bm s^{+}}_{e+1}(\bm\mu^{+\bm k})=G^{\bm s}_{e}(\bm\mu)^{+\bm k}.$$
\end{thm}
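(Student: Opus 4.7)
The plan is to prove the theorem by a double induction: outer on $r$, and inner on $\bm\mu\in\mathcal{R}^r$ under the order $\succcurlyeq$ of \cref{algsec}. For $r=1$ the statement reduces to \cref{Emptyrunrem_level1}, the James-Mathas empty runner removal theorem, since under \eqref{same_runn_d} and \eqref{conditiononk^j} the hypothesis $l(\mu)\leq -k$ holds. For $r>1$ I would assume the conclusion both for all smaller values of $r$ and for every $\bm\nu\in\mathcal{R}^r$ with $\bm\mu\succ\bm\nu$, and then mirror Fayers' three-step LLT-type recursion from \cref{algsec}.

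If $\mu^{(1)}=\varnothing$, so $\bm\mu=\bm\mu_0$, I would combine \cref{fcomp} on both the $e$- and $(e+1)$-sides with \cref{canbasis_empty}. The outer inductive hypothesis applied to $\bm\mu_-$ gives $G^{\bm s_-}_{e}(\bm\mu_-)^{+\bm k_-}=G^{\bm s^+_-}_{e+1}(\bm\mu_-^{+\bm k_-})$; matching coefficients (using injectivity of $^{+\bm k_-}$ on multipartitions together with linear independence of the standard basis) then forces
\[G^{\bm s^+}_{e+1}((\varnothing,\bm\mu_-^{+\bm k_-}))=\sum_{\bm\mu_-\trianglerighteq\bm\lambda_-}d^{\bm s_-}_{\bm\lambda_-\bm\mu_-}(v)\,(\varnothing,\bm\lambda_-^{+\bm k_-}),\]
which is exactly the hypothesis of \cref{canbasis_empty}. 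That proposition yields $G^{\bm s^+}_{e+1}(\bm\mu_0^{+\bm k})=\sum_{\bm\mu_-\trianglerighteq\bm\lambda_-}d^{\bm s_-}_{\bm\lambda_-\bm\mu_-}(v)\,\bm\lambda_0^{+\bm k}$, and one further application of \cref{fcomp} shows this equals $G^{\bm s}_{e}(\bm\mu_0)^{+\bm k}$, as required.

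Now suppose $\mu^{(1)}\neq\varnothing$. Let $\mathfrak{f}=f_{i_t}^{(m_t)}\cdots f_{i_1}^{(m_1)}$ be the product of divided powers built from the sizes and residues of the non-empty ladders of $\mu^{(1)}$. The LLT-type algorithm yields coefficients $\beta_{\bm\nu}\in\mathbb{Z}[v+v^{-1}]$ with
\[\mathfrak{f}\cdot G^{\bm s}_{e}(\bm\mu_0)=G^{\bm s}_{e}(\bm\mu)+\sum_{\bm\mu\succ\bm\nu,\ \bm\nu\in\mathcal{R}^r}\beta_{\bm\nu}\,G^{\bm s}_{e}(\bm\nu).\]
Applying $^{+\bm k}$, using the inner inductive hypothesis to replace each $G^{\bm s}_{e}(\bm\nu)^{+\bm k}$ by $G^{\bm s^+}_{e+1}(\bm\nu^{+\bm k})$, and invoking \cref{same_coeff} together with the inductive identity $G^{\bm s^+}_{e+1}(\bm\mu_0^{+\bm k})=G^{\bm s}_{e}(\bm\mu_0)^{+\bm k}$ (available since $\bm\mu\succ\bm\mu_0$) to identify $(\mathfrak{f}\cdot G^{\bm s}_{e}(\bm\mu_0))^{+\bm k}$ with $\mathfrak{F}\cdot G^{\bm s^+}_{e+1}(\bm\mu_0^{+\bm k})$, I obtain
\[\mathfrak{F}\cdot G^{\bm s^+}_{e+1}(\bm\mu_0^{+\bm k})=G^{\bm s}_{e}(\bm\mu)^{+\bm k}+\sum_{\bm\mu\succ\bm\nu}\beta_{\bm\nu}\,G^{\bm s^+}_{e+1}(\bm\nu^{+\bm k}).\]
The left-hand side is bar-invariant on the $(e+1)$-side, as $\mathfrak{F}$ is a product of bar-invariant divided powers $\mathfrak{F}_{j}^{(\ell)}$ acting on a canonical basis vector; since the $\beta_{\bm\nu}$ lie in $\mathbb{Z}[v+v^{-1}]$ and each $G^{\bm s^+}_{e+1}(\bm\nu^{+\bm k})$ is bar-invariant, this property propagates to $G^{\bm s}_{e}(\bm\mu)^{+\bm k}$.

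To conclude via the uniqueness statement of \cref{decdom}, it remains to verify that $G^{\bm s}_{e}(\bm\mu)^{+\bm k}=\sum_{\bm\lambda}d^{\bm s}_{\bm\lambda\bm\mu}(v)\,\bm\lambda^{+\bm k}$ has $\bm\mu^{+\bm k}$-coefficient equal to $1$, remaining coefficients in $v\mathbb{Z}[v]$, and indexing multipartitions satisfying $\bm\lambda^{+\bm k}\trianglelefteq\bm\mu^{+\bm k}$. The first two points are immediate from the corresponding properties of $G^{\bm s}_{e}(\bm\mu)$ together with the injectivity of $^{+\bm k}$ on multipartitions. I expect the principal technical obstacle to be the third: the monotonicity lemma $\bm\lambda\trianglelefteq\bm\mu\Longrightarrow\bm\lambda^{+\bm k}\trianglelefteq\bm\mu^{+\bm k}$ for empty-runner insertions. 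This should follow from an explicit abacus computation, in which the parts of $(\lambda^{(j)})^{+k^{(j)}}$ are obtained from those of $\lambda^{(j)}$ by uniform shifts determined by the bead's level and runner; the conditions \eqref{same_runn_d} and \eqref{conditiononk^j} ensure that these shifts are compatible across components, so that the partial sums defining the dominance order behave monotonically. Once this combinatorial step is in place, \cref{decdom} identifies $G^{\bm s}_{e}(\bm\mu)^{+\bm k}$ with $G^{\bm s^+}_{e+1}(\bm\mu^{+\bm k})$, completing the induction.
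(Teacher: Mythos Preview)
Your approach is essentially the paper's: the same double induction (outer on $r$, inner on the order used in the LLT recursion), the same base case \cref{Emptyrunrem_level1}, and the same chain of ingredients \cref{fcomp}, \cref{canbasis_empty}, \cref{same_coeff}. The one substantive divergence is the ``principal technical obstacle'' you flag at the end, and here you are making life harder than necessary.

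You do \emph{not} need the monotonicity lemma $\bm\lambda\trianglelefteq\bm\mu\Rightarrow\bm\lambda^{+\bm k}\trianglelefteq\bm\mu^{+\bm k}$. The dominance bullet in \cref{decdom} is a \emph{property} of the canonical basis vector, not part of what is required to pin it down: bar-invariance together with the condition ``leading coefficient $1$, all others in $v\mathbb{Z}[v]$'' already determines the vector uniquely. Indeed, if two bar-invariant vectors share these coefficient constraints, their difference has every standard-basis coefficient in $v\mathbb{Z}[v]$; picking a dominance-maximal nonzero coefficient and using that the bar involution is unitriangular with respect to dominance forces that coefficient to be bar-invariant, hence zero. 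This is exactly how the paper concludes: having shown that
\[
\mathfrak{F}\cdot G_{e+1}^{\bm s^{+}}(\bm\mu_0^{+\bm k}) - \sum_{\bm\sigma} a_{\bm\sigma \bm\mu}(v)\, G_{e+1}^{\bm s^{+}}(\bm\sigma^{+\bm k})
\]
is simultaneously bar-invariant (as a $\mathbb{Z}[v+v^{-1}]$-combination of bar-invariant elements) and equal to $G^{\bm s}_{e}(\bm\mu)^{+\bm k}$ (by linearity of~${}^{+\bm k}$ and the inner induction), the paper simply invokes uniqueness. No dominance check is performed or needed. Drop that paragraph and your argument is complete and matches the paper's.
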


\begin{proof}
We proceed by induction on the number $r$ of components with base case being Theorem \ref{Emptyrunrem_level1}.

Suppose $r>1$. By the inductive hypothesis, for the $e$-multiregular $(r-1)$-multipartition $\bm\mu_-$ and $\bm s_-^+=(s_2^+, \ldots, s_r^+)$ we have
\begin{align*}
    G^{\bm s_-^{+}}_{e+1}(\bm\mu_-^{+\bm k_-})& = G^{\bm s_-}_{e}(\bm\mu_-)^{+\bm k_-}\\
    & =\bm\mu_-^{+\bm k_-} + \sum_{\bm\mu_-\triangleright\bm\la_-}d_{\bm\la_-\bm\mu_-}^{\bm s_-}(v) \bm\la_-^{+\bm k_-}.
\end{align*}

Now, we want to show that the thesis is also true for the $r$-multipartition $(\mu^{(1)},\bm\mu)$.
Then, by \cref{fcomp},
$$G^{\bm s^+}_{e+1}((\varnothing,\bm\mu_-^{+\bm k_-}))=(\varnothing,\bm \mu_-^{+\bm k_-}) + \sum_{\bm\mu_-\triangleright\bm\la_-}d_{\bm\la_-\bm\mu_-}^{\bm s_-}(v) (\varnothing,\bm\la_-^{+\bm k_-}).$$
Hence, by Proposition \ref{canbasis_empty} we have that
$$G^{\bm s^+}_{e+1}(\bm\mu_0^{+\bm k})=\bm\mu_0^{+\bm k}+ \sum_{\bm\mu_-\triangleright\bm\la_-}d_{\bm\la_-\bm\mu_-}^{\bm s_-}(v) \bm\la_0^{+\bm k}.$$

Using the LLT algorithm on partitions, we can write $G^{(s_1)}(\mu^{(1)})$ as $\mathfrak{f}\cdot\varnothing$ in the Fock space $\mathcal{F}^{(s_1)}$, for some $\mathfrak{f}\in \mathcal{U}$.
Applying the operator $\mathfrak{f}$ to $G_e^{\bm s}(\bm\mu_0)$ we can write
\begin{equation}\label{Ge(0,mu)}
   \mathfrak{f} \cdot G_e^{\bm s}(\bm\mu_0)=\sum_{\bm\nu\in \mathcal{P}^{r}}g_{\bm\nu}\bm\nu, 
\end{equation}
with $g_{\bm\nu}\in \mathbb{Z}[q, q^{-1}]$, because $\mathfrak{f} \cdot G_e^{\bm s}(\bm\mu_0)$ is an element of the $\mathcal{U}$-submodule $M^{\otimes \bm s}$ of $\mathcal{F}^{\bm s}$.
Performing step (c) of the LLT algorithm for multipartitions in \cite{Fay10} we get 
\begin{equation}\label{fGe(0,mu)}
  \mathfrak{f} \cdot G_e^{\bm s}(\bm\mu_0) - \sum_{\bm\mu \triangleright \bm\sigma} a_{\bm\sigma \bm\mu}(v) G_e^{\bm s}(\bm\sigma) = G_e^{\bm s}( \bm\mu) 
\end{equation}
for some $a_{\bm\sigma \bm\mu}(v) \in \mathbb{Z}[q+q^{-1}]$.

Now consider $\mathfrak{F}$ the operator obtained from $\mathfrak{f}$ by applying the procedure explained just before \cref{same_coeff}. By \cref{same_coeff} we have 
\begin{equation}\label{Ge+(0+,mu+)}
   \mathfrak{F} \cdot G_{e+1}^{\bm s^{+}}(\bm\mu_0^{+\bm k})=\sum_{\bm\nu\in \mathcal{P}^{r}}g_{\bm\nu}\bm\nu^{+\bm k}. 
\end{equation}
We perform the following subtraction of terms

\begin{equation}\label{alice}
    \mathfrak{F} \cdot G_{e+1}^{\bm s^{+}}(\bm\mu_0^{+\bm k}) - \sum_{\bm\mu \triangleright \bm\sigma} a_{\bm\sigma \bm\mu}(v) G_{e+1}^{\bm s^{+}}(\bm\sigma^{+\bm k}),
\end{equation}
and we proceed by induction on the dominance order supposing that $G_{e+1}^{\bm s^{+}}(\bm\sigma^{+\bm k}) = G_e^{\bm s}(\bm\sigma)^{+\bm k}$ for all $\bm\sigma \triangleleft \bm\mu$. Then
\begin{align}\label{indFG-G+}
\eqref{alice} 
= \mathfrak{F} \cdot G_{e+1}^{\bm s^{+}}(\bm\mu_0^{+\bm k}) - \sum_{\bm\mu\triangleright \bm\sigma} a_{\bm\sigma \bm\mu}(v) G_{e}^{\bm s}(\bm\sigma)^{+\bm k}.
\end{align}
Since in \eqref{indFG-G+} we are performing exactly the same operations as in \eqref{fGe(0,mu)} and all the involved coefficients are the same,
by linearity of $^{+ \bm k}$ we have 
\begin{equation*}
 \eqref{indFG-G+} = G_e^{\bm s}(\bm\mu)^{+\bm k}.   
\end{equation*}
Moreover, by the uniqueness of the canonical basis vectors of $M^{\otimes\bm s^+}$ we have that
$$ \eqref{indFG-G+}= G_{e+1}^{\bm s^+}(\bm\mu^{+\bm k}).$$
Hence, we conclude that
$$G^{\bm s^{+}}_{e+1}(\bm\mu^{+\bm k})=G^{\bm s}_{e}(\bm\mu)^{+\bm k}.$$
\end{proof}


\addcontentsline{toc}{chapter}{Bibliography}
\bibliographystyle{alpha}
\bibliography{bibfile}

\end{document}